\DeclareMathOperator{\End}{End}
\DeclareMathOperator{\Hom}{Hom}
\DeclareMathOperator{\ev}{ev}
\DeclareMathOperator{\coev}{coev}
\DeclareMathOperator{\tev}{\widetilde{ev}}
\DeclareMathOperator{\tcoev}{\widetilde{coev}}
\newcommand\be{\begin{equation}}
\newcommand\ee{\end{equation}}
\theoremstyle{plain}
\newtheorem{theorem}{Theorem}
\newtheorem{Thm}{Theorem}
\newtheorem{lemma}[theorem]{Lemma}
\newtheorem{proposition}[theorem]{Proposition}
\newtheorem{corollary}[theorem]{Corollary}
\theoremstyle{definition}
\newtheorem{remark}[theorem]{Remark}
\newtheorem{definition}[theorem]{Definition}
\numberwithin{equation}{section}
\numberwithin{theorem}{section}
\newcommand*{\refh}[2]{\hyperref[#2]{#1~\ref{#2}}} 
\newcounter{ourcount}
\newcommand{\id}{\mathrm{id}}
\newcommand{\ok}{{\ensuremath{\Bbbk}}}
\newcommand{\cat}{\mathcal{C}}
\newcommand{\vect}{\mathrm{{\bf vect}}}
\newcommand{\rep}{\mathrm{\bf Rep}\,}
\newcommand{\tensor}{\otimes}
\newcommand{\one}{\boldsymbol{1}}
\newcommand{\Urest}{\overline{U}_{\mbox{}\!\!q}\,\mathfrak{sl}_2}
\newcommand{\UA}{{A}}
\renewcommand{\t}{{\mathsf{t}}}
\newcommand{\tr}{\operatorname{tr}}
\newcommand{\Id}{\operatorname{id}}
\newcommand{\Proj}{\mathcal{P}}
\newcommand{\Amod}{A\text{\rm{-mod}}}
\newcommand{\HH}{\operatorname{HH}_0}
\newcommand{\Upmod}{\Urest\text{\rm{-pmod}}}
\newcommand{\Apmod}{A\text{\rm{-pmod}}}
\newcommand{\PP}{\mathcal{P}}
\newcommand{\kk}{\ok}
\newcommand{\UH}{{H}}
\DeclareMathOperator{\Hpmod}{{\it H}-pmod}
\DeclareMathOperator{\Hmod}{{\it H}-mod}
\DeclareMathOperator{\Ker}{Ker}
\newcommand{\ffrac}[2]{\mbox{\footnotesize$\displaystyle\frac{#1}{#2}$}}
\newcommand{\qbin}[2]{\mathchoice%
  {{\qbinm{#1}{#2}}}{\qbinmm{#1}{#2}}%
  {\qbinmm{#1}{#2}}{\qbinmm{#1}{#2}}}
\newcommand{\qbinm}[2]{\mbox{\footnotesize$\displaystyle
    \genfrac{[}{]}{0pt}{}{#1}{#2}$}}
\newcommand{\qbinmm}[2]{\genfrac{[}{]}{0pt}{}{#1}{#2}}
\newcommand{\Bnm}{\mathsf{B}}
\newcommand{\coint}{{\boldsymbol{c}}}
\newcommand{\rint}{{\boldsymbol{\mu}}}
\newcommand{\lint}{{\boldsymbol{\mu}}^l}
\newcommand{\pivot}{{\boldsymbol{g}}}
\newcommand{\rintg}{\rint_{\pivot}}
\newcommand{\lintg}{\rint^l_{\pivot^{-1}}}
\newcommand{\comod}{{\boldsymbol{a}}}
\newcommand{\ribbon}{{\boldsymbol{v}}}
\newcommand{\sqs}{{\boldsymbol{u}}}
\newcommand{\balance}{{\boldsymbol{g}}}
\newcommand{\wt}{\mathsf{wt}}
\newcommand{\gp}{\ .}
\newcommand{\gc}{\ ,}
\newcommand{\qp}{\quad .}
\newcommand{\qc}{\quad ,}
\newcommand{\ipic}[3][-0.5]{\raisebox{#1\height}{\scalebox{#3}{\includegraphics{#2.pdf}}}}
\newcommand{\ipicc}[3][-0.4]{\raisebox{#1\height}{\scalebox{#3}{\includegraphics{#2.pdf}}}}
\renewcommand{\arg}{\,-\,}
\newcommand{\eps}{\epsilon}
\newcommand{\rt}{p}
\newcommand{\UU}{\overline{U}_{\mbox{}\!\!q}\,\mathfrak{g}}
\newcommand{\op}{\mathrm{op}}
\newcommand{\Mat}{\mathrm{Mat}}
\newcommand{\catCY}{\mathcal{D}}
\newcommand{\proj}{\mathsf{Proj}}
\newcommand{\tP}{\hat{P}}
\newcommand{\B}{{\mathsf{B}}}
\newcommand{\A}{{\mathsf{A}}}
\begin{document}

\title{Modified trace is a symmetrised integral}
\author[A.\,Beliakova]{Anna Beliakova}
\address{University of Zurich, I-Math, Winterthurerstrasse 190, CH-8057 Zurich, Switzerland.} 
\email{anna@math.uzh.ch}
\author[C.\,Blanchet]{Christian Blanchet}
\address{ Universit\'e Paris Diderot, IMJ-PRG, UMR 7586 CNRS,  F-75013, Paris, France.}
\email{Christian.Blanchet@imj-prg.fr}
\author[A.M.\,Gainutdinov]{Azat M.\,Gainutdinov}
\address{Institut Denis Poisson, CNRS, Universit\'e de Tours, Universit\'e d'Orl\'eans, Parc de Grandmont, 37200 Tours, France. }
\email{azat.gainutdinov@lmpt.univ-tours.fr}

\begin{abstract} 
A modified trace for a finite  $\ok$-linear pivotal category is a family of linear forms on endomorphism spaces of projective objects
which has cyclicity  and   so-called partial trace properties. 
The modified trace provides a meaningful generalisation of the categorical trace to non-semisimple categories and allows to construct interesting topological invariants.
We show that  a non-degenerate modified trace defines  a compatible with  duality Calabi-Yau structure on the subcategory of projective objects.
We prove, that for any finite-dimensional unimodular pivotal Hopf algebra over a field $\ok$,
 a modified trace is determined by a  symmetric linear form  on the Hopf algebra constructed from an integral.
More precisely, we prove that  shifting with the pivotal element defines
an isomorphism between the space of right integrals, which  is known to be $1$-dimensional, and the space of modified traces.
This result allows us to compute
modified traces for all simply laced restricted quantum groups
at roots of unity.

\end{abstract}

\maketitle

\vskip 3mm

\textit{Keywords:} Hopf algebras and theory of integrals, pivotal categories, categorical and \\ \mbox{}\quad modified traces, quantum groups.\\
\mbox{}\quad\textit{AMS codes:}  18D10, 16T05, 17B37.

\vskip 5mm
\section{Introduction}
This paper establishes a one-to-one correspondence  between two {\em a priori} very different notions in the  theory
of  finite-dimensional pivotal Hopf algebras.
One of them is the well-known
linear form on the Hopf algebra $H$, called
 {\em integral}, 
and the other  is a certain trace function
on the category of projective $H$-modules, called
 {\em modified trace}.
Our new  correspondence allows to transfer 
properties (such as existence and uniqueness) from the integral to the modified trace, 
and most importantly to compute the modified traces explicitly. The main application of  modified traces 
is in constructing topological invariants based on \textsl{non-semisimple} categories. 
In particular, our correspondence allows to use the integral theory  to construct fully featured non-semisimple TQFTs \cite{RGP, RGP1,DGGPR} extending the ones
of Kerler and Lyubashenko \cite{Kerler:2001}.
 The importance
of non-semisimple theories in physics was also stressed recently in
the work of Gukov, Pei, Putrov and Vafa \cite{GPPV}. In mathematics, such theories are expected to  provide the right
framework for
 categorification of 3-manifold invariants~\cite{GM}. 

Let us now introduce our main players.

\subsection*{Integral}
The integral or dually cointegral
 can be thought as  analogs of the Haar measure on a compact group and the invariant $\sum_{g\in G} g$ in the group algebra
 of a finite group, respectively. 
 If non-zero, they generate one-dimensional ideals in
 the algebra and its dual. 
 The integral has important topological applications.
It plays the role of a Kirby color in the 
 Hennings construction~\cite{Hennings} 
  of 3-manifold invariants
 generalizing those of Reshetikhin-Turaev.
 
Let $\UH=(\UH, m,\one, \Delta,\epsilon,S)$ be a Hopf algebra  over a field $\ok$.
A {\em right integral} on $\UH$ is a linear form $\rint\colon \UH\to \ok$ satisfying 
\begin{equation}\label{eq:rint-def}
  (\rint\tensor\id)\Delta(x)=\rint(x)\one\ \quad
  {\text {for any}}\quad x\in \UH .
\end{equation}
  Analogously, a {\em left integral}
$\lint\in \UH^*$ satisfies 
 \be \label{eq:lint-def}
 (\id\tensor\lint)\Delta(x)=\lint(x)\one\ 
 \quad
  {\text {for any}}\quad x\in \UH .
\ee
 If $H$ is finite-dimensional,
the space of solutions of these
equations  is known to be 1-dimensional. 
A  \textit{pivotal} Hopf algebra  is a pair $(\UH,\pivot)$, where
the pivot $\pivot\in \UH$
is a group-like element implementing $S^2$, i.e.
$S^2(x)=\pivot x\pivot^{-1}$ for any $x\in H$.
   
   A \textit{symmetrised right} integral $\rintg$ 
on $(\UH,\pivot)$ is defined by 
\be\label{eq:mug-gen}
\rintg(x):=\rint(\pivot x) \quad{\text {for any}} \quad
x\in \UH\  .
\ee
Analogously, a {\em symmetrised left} integral is
\be
\lintg(x):=\lint(\pivot^{-1} x)\quad{\text{for any}}  \quad x\in \UH\ .
\ee
We call  a pivotal Hopf algebra $(\UH,\pivot)$  \textit{unibalanced} if
its symmetrised right integral is also left.

Dually, 
a left (resp. right) 
\textit{cointegral} in $\UH$ is an element $\coint\in H$ such that
$x\coint=\epsilon(x) \coint$ (resp. $\coint x=\epsilon(x) \coint$) for all $x\in \UH$. 
Non-trivial right and left cointegrals 
are unique up to scalar~\cite{LarsonSweedler}.
We call a Hopf algebra {\em unimodular} if its right   cointegral is also left.

In the unimodular case, the  symmetrised
 integrals define {\em symmetric}  linear forms on $H$, i.e.
\be  \rintg(xy)=\rintg(yx) \quad{\text{and}}\quad
 \lintg(xy)=\lintg(yx), 
\ee
which are also non-degenerate
(compare with Proposition~\ref{lemma:mug-sym} below).

\subsection*{Modified trace}
Our second main player is the {\em modified trace} introduced in~\cite{GPV, GKP}.
Unlike the integral, it is defined
on the category of modules and  motivated by topology.
For braided pivotal categories,  the modified
trace allows a non-zero evaluation of the Reshetikhin-Turaev type invariants 
on links colored with  projective objects, 
even if the category is not semisimple.
We will work with pivotal categories without  braiding assumptions and refer to  Section~\ref{sec:pivot-str} for 
detailed definitions and graphical conventions.

Let $\cat$ be a $\ok$-linear 
 pivotal category.
Given $V,W\in \cat$ and $f\in \End_{\cat}(W\otimes V)$, let $\tr_W^l (f)$ and $\tr_V^r (f)$ be the left and right {\it partial} traces defined as follows
\begin{align}\label{E:partialL}
&\tr^l_W(f)=(\ev_W \otimes \Id_V)\circ(\Id_{W^*} \otimes f)\circ(\tcoev_W \otimes \Id_V) \;= \;
 \;\ipicc{part-tr-left}{.16} 
 \put(-19,21){{\tiny $W$}}  \put(-16,1){{\footnotesize $f$}}\put(-10,29){{\tiny $V$}}
 \;\in\; \End_\cat(V),  \\
&\tr^r_V(f)=(\Id_W \otimes \tev_V)\circ(f \otimes \Id_{V^*})\circ(\Id_W \otimes \coev_V) \; = \; \put(15,-18){{\tiny $V$}}
\;\ipicc{part-tr-right}{.16} \put(-27,1){{\footnotesize $f$}}\put(-34,-25){{\tiny $W$}} \;\in \;\End_{\cat}(W)\gp \label{E:PartialLRtrace}
\end{align}
The main example of a pivotal category used in this paper is the category
$\Hmod$ of finite-dimensional left modules over 
a pivotal Hopf algebra $(H,\pivot)$.
In $\Hmod$
 the left  (co)evaluation morphisms are those for vector spaces while the right ones are defined using the pivot. 

Setting $W=\one$  in \eqref{E:PartialLRtrace} and assuming
$\End_\cat(\one) = \ok$, we get the 
definition of the (right) categorical trace 
\be\label{eq:cat-tr-def}
\tr^{\cat}_V(f):= \tev_V\circ(f\tensor\id)\circ\coev_V\; \in \; \ok .
\ee
Analogously, assuming $V=\one$ in
\eqref{E:partialL}, we get its left version $^{\cat}\!\tr_V(f)$.

We assume now that tensor product in $\cat$ is exact and
let $\proj(\cat)$ be the tensor ideal of projective objects in $\cat$.
A {\em right (left) modified trace} on $\proj(\cat)$ is  a family of linear functions 
\be
\{\t_P\colon \End_\cat(P)\rightarrow \ok \}_{P\in \proj(\cat)}
\ee
satisfying cyclicity and  right (left) partial
trace properties formulated below.
\begin{description}
\item[ \sc Cyclicity] If $P,P'\in \proj(\cat)$ then for any morphisms \mbox{$f\colon P\rightarrow P' $} and $g\colon P'\rightarrow P$
\begin{equation}\label{E:Tracefggf}
\t_P(g \circ f)=\t_{P'}(f  \circ g)\gp
\end{equation} 
\item[\sc Right partial trace property] If 
$P\in \proj(\cat)$
 and $V\in \cat$ then
\begin{equation}\label{rpartial}
\t_{P\otimes V}\left(f \right)
=\t_P \bigl( \tr^r_V(f)\bigr)
\end{equation}
for any \mbox{$f\in \End_\cat(P\otimes V)$}. 
\item[\sc Left partial trace property] If $P\in \proj(\cat)$
 and $V\in \cat$ then
\begin{equation}\label{lpartial}
\t_{V\otimes P}\left(f\right)
=\t_P\bigl( \tr^l_V(f)\bigr) \ 
\end{equation}
for any \mbox{$f\in \End_\cat(V\otimes P)$}. 
\end{description}
A left and right modified trace  will be called {\em modified trace}.

It is then clear from the definition that the right categorical trace is also  a right modified trace, and analogously for the left. The trace $\tr^{\cat}$ is  non-zero on $\proj(\cat)$ if and only if $\cat$ is semisimple. However, there are many examples of non-semisimple categories where a non-zero modified trace exists, and even non-degenerate, which we discuss below.

We call a right (left) modified trace $\t$ {\em non-degenerate} if
 the pairings
\be\label{eq:nondegeneracy} 
	\Hom_\cat(M,P) \times \Hom_\cat(P,M) \to \,\ok
	\quad , \quad
	(f,g) \mapsto \t_P(f \circ g) \ ,
\ee
 are non-degenerate
for all $P\in \proj(\cat)$ and  $M\in \cat$.\footnote{We note that $M$ is not necessarily projective and so the cyclicity property does not generally applies here.}

For our main example $\cat=\Hmod$,  $\proj(\cat)=\Hpmod$ is the full subcategory of projective $H$-modules.

Let us motivate the definition of the modified trace from a
different perspective. 

\subsection*{Modified trace and Calabi-Yau structure}
Let $\catCY$  be a $\ok$-linear  category 
 equipped with a family of {\it trace maps}, i.e.
$\ok$-linear maps 
\be\label{eq:tV-gen}
\{t_V\colon \End_{\catCY}(V)\rightarrow \ok \}_{V\in \catCY}
\ee
satisfying the trace relation (or cyclicity)
$$t_V(g\circ f)=t_W(f\circ g)$$
for any $f\colon V\to W$ and $g\colon W\to V$ in $\catCY$. 
We say that $\catCY$ is {\em Calabi-Yau} 
 if the following  pairings 
 \be\label{eq:CY} 
	\Hom_\catCY(V,W) \times \Hom_\catCY(W,V) \to \,\ok
	\quad , \quad
	(f,g) \mapsto t_W(f \circ g) 
\ee
are non-degenerate for all $V,W\in \catCY$.

In any $\ok$-linear pivotal 
category $\catCY$ we have the following duality isomorphisms:
\begin{equation} \label{E:duality_iso_r}
\begin{split}
\begin{array}{rcl}
d^{\cap}\colon\; &\Hom_\catCY(W,U\tensor V) \xrightarrow{\sim} \Hom_\catCY(W\tensor V^*, U)\\
&f\mapsto (\id_U\otimes \tev_V)\circ (f\otimes \id_{V^*})\ 
\end{array}\
, \qquad
 \ipic{rect-map-1-2-verso}{.15} 
\put(-17,-3){\footnotesize $f$}
 \put(-25,24){\scriptsize $U$} \put(-11,24){\scriptsize $V$} 
\put(-18,-29){\scriptsize $W$} 
  \; \mapsto \; 
\ipic{rect-map-1-2-cap-right}{.15} 
\put(-25,-3){\footnotesize $f$}
 \put(-33,24){\scriptsize $U$} 
\put(-28,-29){\scriptsize $W$}  \put(-4,-29){\scriptsize $V^*$} 
\put(-28,-35){\ }
\\
\begin{array}{rcl}
d_{\cup}\colon\; &\Hom_\catCY(U\tensor V, W) \xrightarrow{\sim} \Hom_\catCY(U,W\tensor V^*)\\
&f\mapsto(f\otimes \id_{V^*})\circ (\id_U\otimes \coev_V) \ 
\end{array}\
,\qquad
 \ipic{rect-map-1-2}{.15} 
\put(-17,-2){\footnotesize $f$}
\put(-18,24){\scriptsize $W$} 
 \put(-25,-29){\scriptsize $U$} \put(-11,-29){\scriptsize $V$} 
  \; \mapsto \; 
\ipic{rect-map-1-2-cup}{.15} 
\put(-25,-2){\footnotesize $f$}
\put(-28,24){\scriptsize $W$}  \put(-4,24){\scriptsize $V^*$} 
 \put(-33,-29){\scriptsize $U$} 
 \end{split}
\quad \gp
\end{equation}

Let $\catCY$ be a  $\ok$-linear 
pivotal category.
We call a Calabi-Yau stucture on  $\catCY$ 
{\it compatible with  duality on the  right}
 if the following
diagram commutes, for all $U,V,W\in\catCY$,
\be\label{rcomp}
\xymatrix@R=20pt@C=40pt@W=10pt@M=10pt{
\Hom_{\catCY}(U\otimes V,W) \times \Hom_{\catCY}(W,U\otimes V) \ar[r]^{\qquad\qquad\quad \circ}\ar@<-45pt>[dd]^{d_{\cup}}\ar@<25pt>[dd]^{d^{\cap}}&
\End_{\catCY}(U\otimes V)\ar@<-0pt>[d]_{ t_{U\tensor V}}\\
& \ok\\
 \Hom_\catCY(U,W\tensor V^*) \times  \Hom_\catCY(W\tensor V^*, U) \ar[r]^{\qquad\qquad\qquad\circ}& \End_{\catCY}(U) \ar[u]^{ t_U}
}
\ee
We analogously define Calabi-Yau stucture on  $\catCY$ compatible with  duality on the  {\it left}, see more details in Section~\ref{sec:pivot-str}.
It is now easy to check that  the right  partial 
 trace condition~\eqref{rpartial} formulated for the family~\eqref{eq:tV-gen}  with $\catCY=\proj(\cat)$
implies commutativity of \eqref{rcomp},
and similarly for the left property. We give a proof that the inverse is also true, in Theorem~\ref{thm:CYcomp}.

\subsection*{Main results}
The previous discussion together with  Theorem~\ref{thm:CYcomp}
imply that a non\--de\-ge\-ne\-rate modified trace on $\proj(\cat)$
 is nothing else but a Calabi-Yau structure  on $\proj(\cat)$ 
compatible with  duality. 
For a 
finite-dimensional pivotal Hopf algebra $H$,
 such Calabi-Yau structure  
 on $\Hpmod$
is uniquely determined
by the non-degenerate symmetric linear form $\t_H\colon \End_H(H)\rightarrow \ok$ associated with the left regular representation. This is proven in
Proposition \ref{cor:ext} and Theorem~\ref{Thm:nondegeneracy} in a more general setting.

We are now ready to formulate our main result.
\begin{Thm}\label{thm:main}
Let $(\UH,\pivot)$ be a finite-dimentional unimodular pivotal Hopf algebra over a field~$\ok$. Then the space of right (left) modified traces on $\Hpmod$
is equal to the space of symmetrised right (left) integrals, and hence is 1-dimensional. Moreover, the right  modified trace on $\Hpmod$ is non-degenerate and determined by
 \begin{equation}\label{eq:tH-mu}
\t_\UH(f) = \rintg\bigl(f(\one)\bigr) \quad{\text{
for any}}\quad f\in\End_\UH(\UH)\ .
\end{equation}
Analogously, the left modified trace is non-degenerate and determined by
\begin{equation}
\t_{\UH}(f) = \lint_{\pivot^{-1}}\bigl(f(\one)\bigr)
\quad{\text{
for any}}\quad f\in\End_\UH(\UH)\ .
 \end{equation}
In particular, $\UH$ is unibalanced if and only if the right modified trace is also left. 
\end{Thm}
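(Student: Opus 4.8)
The plan is to build a bijection between right modified traces on $\Hpmod$ and symmetrised right integrals by restricting a modified trace to the regular representation and showing that the resulting linear form is forced to be a symmetrised right integral. First I would reduce everything to $\End_\UH(\UH)$. Recall $\End_\UH(\UH)\cong\UH^{\op}$ via $f\mapsto f(\one)$, the inverse sending $a$ to the right multiplication $f_a\colon h\mapsto ha$, so that $f_a\circ f_b=f_{ba}$. Given a right modified trace $\t$, set $\psi(a):=\t_\UH(f_a)$, a linear form on $\UH$. Since $\UH$ is finite dimensional, every $P\in\Hpmod$ is a direct summand of a free module $\UH^{\oplus n}$; for a summand with $\pi\iota=\id_P$ cyclicity gives $\t_P(g)=\t_{\UH^{\oplus n}}(\iota g\pi)$, and cyclicity also forces $\t_{\UH^{\oplus n}}$ to split as the sum of its diagonal blocks. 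Hence $\t$ is completely determined by $\t_\UH$, i.e. by $\psi$; this is exactly the content of Proposition~\ref{cor:ext} and Theorem~\ref{Thm:nondegeneracy}, so $\t\mapsto\psi$ is injective. Moreover cyclicity applied to $f_a\circ f_b=f_{ba}$ gives $\psi(ab)=\psi(ba)$, so $\psi$ is symmetric.

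The heart of the argument is to show that the right partial trace property forces $\psi$ to be a symmetrised right integral, namely that $\rint(x):=\psi(\pivot^{-1}x)$ satisfies $\sum\rint(x_{(1)})x_{(2)}=\rint(x)\one$. The key tool is the freeness isomorphism $\theta\colon \UH\otimes V_{\mathrm{triv}}\xrightarrow{\ \sim\ }\UH\otimes V$, $h\otimes v\mapsto\sum h_{(1)}\otimes h_{(2)}v$, with inverse $h\otimes v\mapsto\sum h_{(1)}\otimes S(h_{(2)})v$, which intertwines the action on the first tensor factor with the diagonal action. For $V\in\Hmod$ with basis $\{v_k\}$ and any $\hat f\in\End_\UH(\UH\otimes V_{\mathrm{triv}})\cong\Mat_{\dim V}(\UH^{\op})$, written as a matrix of right multiplications with entries $a_{kl}$, put $f:=\theta\circ\hat f\circ\theta^{-1}\in\End_\UH(\UH\otimes V)$. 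On one hand, by cyclicity and the direct sum decomposition, $\t_{\UH\otimes V}(f)=\t_{\UH\otimes V_{\mathrm{triv}}}(\hat f)=\sum_k\psi(a_{kk})$. On the other hand the right partial trace property~\eqref{rpartial} gives $\t_{\UH\otimes V}(f)=\t_\UH(\tr^r_V(f))=\psi\bigl(\tr^r_V(f)(\one)\bigr)$. I would then compute $\tr^r_V(f)(\one)$ explicitly from~\eqref{E:PartialLRtrace}, inserting $\theta,\theta^{-1}$, the standard $\coev_V$ and the pivot-twisted $\tev_V$: the antipode and coproduct coming from $\theta^{\pm1}$ reorganise into Sweedler components, while the pivot in $\tev_V$ produces the shift by $\pivot$. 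Comparing the two expressions for all $\hat f$ yields precisely the defining identity of a right integral for $\rint(x)=\psi(\pivot^{-1}x)$, hence $\psi=\rintg$ up to scalar. Since the space of right integrals is $1$-dimensional and shifting by $\pivot$ is an isomorphism, the space of symmetrised right integrals, and therefore the space of right modified traces, is $1$-dimensional. I expect this explicit partial-trace computation, keeping track of the freeness isomorphism and the pivot, to be the main obstacle.

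It remains to prove the converse, that $\t_\UH(f):=\rintg(f(\one))$ really extends to a non-degenerate modified trace. By Proposition~\ref{lemma:mug-sym}, in the unimodular case $\rintg$ is a symmetric and non-degenerate linear form; this is where unimodularity enters, since cyclicity demands a symmetric $\t_\UH$. Proposition~\ref{cor:ext} and Theorem~\ref{Thm:nondegeneracy} then guarantee that such a symmetric non-degenerate form on $\End_\UH(\UH)$ extends uniquely to a non-degenerate modified trace on $\Hpmod$, proving formula~\eqref{eq:tH-mu} together with non-degeneracy. Combined with the previous paragraph, this establishes the claimed equality of the two $1$-dimensional spaces.

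Finally, the left statement is entirely analogous, using the left partial trace~\eqref{lpartial} with $\ev_V$ and the pivot-twisted $\tcoev_V$ and the freeness isomorphism on the other side; it produces the symmetrised left integral $\lintg$, giving $\t_\UH(f)=\lintg(f(\one))$. The last assertion is then immediate: by the two formulas the right modified trace coincides with the left one precisely when $\rintg=\lintg$ as linear forms, which is by definition the statement that the symmetrised right integral is also left, i.e. that $(\UH,\pivot)$ is unibalanced.
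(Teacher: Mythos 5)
Your overall route coincides with the paper's. Your ``freeness isomorphism'' $\theta$ is exactly the map $\phi_W$ of Theorem~\ref{thm:tensor-powers-W}, your matrix description $\End_\UH(\UH\otimes V_{\mathrm{triv}})\cong\Mat_{\dim V}(\UH^{\op})$ is Corollary~\ref{cor:EndHH-Mat}, and your comparison of the diagonal-sum formula $\sum_k\psi(a_{kk})$ with $\t_\UH\bigl(\tr^r_V(f)(\one)\bigr)$ is the paper's Steps 1--2, which indeed reduce the partial trace property~\eqref{rpartial} to the defining relation~\eqref{eq:mug-def-rel} of $\rintg$. So the direction ``modified trace $\Rightarrow$ symmetrised integral'' is sound as sketched, including the pivot bookkeeping, and the unibalanced and left statements are handled as in the paper.

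The converse direction, however, contains a genuine gap. You assert that Proposition~\ref{cor:ext} and Theorem~\ref{Thm:nondegeneracy} ``guarantee that such a symmetric non-degenerate form on $\End_\UH(\UH)$ extends uniquely to a non-degenerate modified trace on $\Hpmod$.'' They do not: these results produce only a cyclic family of trace maps $\{\t_P\}$ that is non-degenerate --- a Calabi--Yau structure --- and say nothing about the partial trace properties~\eqref{rpartial},~\eqref{lpartial}, which are precisely what distinguishes a modified trace from a mere cyclic family. Verifying them is the substantive content of the theorem, and the paper devotes the Reduction Lemma~\ref{lem:tHH-tH} (Corollary~\ref{cor:tHH-tH} in the Hopf setting) together with the Step-1 graphical computation to exactly this point. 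Your own computation in the second paragraph can repair most of it if you read the comparison as an equivalence: with $\psi=\rintg$ the identity $\t_{\UH\otimes V}(f)=\t_\UH\bigl(\tr^r_V(f)\bigr)$ then holds for every $V\in\Hmod$ and every $f$, since each $f\in\End_\UH(\UH\otimes V)$ has the form $\theta\circ\hat f\circ\theta^{-1}$. But even then you only have the property when the projective slot is the regular module; you must still transfer it to an arbitrary $P\in\proj(\cat)$, e.g.\ by writing $\id_P=\sum_i a_i\circ b_i$ with $a_i\colon\UH\to P$, $b_i\colon P\to\UH$ as in~\eqref{E:identities}, applying cyclicity to pass to $\t_{\UH\otimes V}\bigl((b_i\otimes\id_V)\circ f\circ(a_i\otimes\id_V)\bigr)$, and using the naturality $\tr^r_V\bigl((b_i\otimes\id_V)\circ f\circ(a_i\otimes\id_V)\bigr)=b_i\circ\tr^r_V(f)\circ a_i$ before summing over $i$. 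This is in essence the first half of the paper's Reduction Lemma (your stronger ``all $V$'' statement spares you its second half, the $\A,\B$ trick of Figure~\ref{fig:F}); as written, your proposal omits this step entirely, so the converse --- and with it the surjectivity of the map from modified traces onto symmetrised integrals --- is unproved.
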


In the language of  Calabi-Yau categories, Theorem~\ref{thm:main} can be reformulated as follows.

\begin{corollary}\label{cor:1.1}
If $H$ is a finite-dimensional  unimodular pivotal Hopf algebra over a field $\ok$ then the space of  Calabi-Yau structures on $\Hpmod$ compatible with duality on  the right (left) is one dimensional.
\end{corollary}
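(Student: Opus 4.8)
The plan is to deduce Corollary~\ref{cor:1.1} directly from Theorem~\ref{thm:main}, using the identification of duality-compatibility with the partial trace property. The first step is to translate the notion of a Calabi-Yau structure on $\Hpmod=\proj(\Hmod)$ compatible with duality on the right into the language of modified traces. By definition such a structure is a family of cyclic trace maps $\{t_V\colon\End_{\Hpmod}(V)\to\ok\}$ for which the diagram~\eqref{rcomp} commutes and the pairings~\eqref{eq:CY} are non-degenerate. Applying Theorem~\ref{thm:CYcomp} with $\catCY=\proj(\Hmod)$, together with the converse observation recorded just before it, commutativity of~\eqref{rcomp} is equivalent to the right partial trace property~\eqref{rpartial}. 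Since cyclicity is already assumed, the underlying family is then precisely a right modified trace on $\Hpmod$, and the requirement~\eqref{eq:CY} is exactly its Calabi-Yau non-degeneracy.

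The second step establishes the upper bound. Forgetting the non-degeneracy requirement, every Calabi-Yau structure compatible with duality on the right is in particular a right modified trace. By Theorem~\ref{thm:main} the vector space of right modified traces on $\Hpmod$ is one-dimensional, spanned by the trace $\t$ determined by~\eqref{eq:tH-mu}. Hence the family of trace maps underlying any such Calabi-Yau structure is a scalar multiple of $\t$, and all of them lie in the single one-dimensional space $\ok\cdot\t$.

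The third step checks that this one-dimensional space is genuinely realised by Calabi-Yau structures. Theorem~\ref{thm:main} also asserts that $\t$ is non-degenerate in the strong sense of~\eqref{eq:nondegeneracy}, where the second object is allowed to range over all of $\Hmod$; restricting it to projective objects yields precisely the Calabi-Yau non-degeneracy~\eqref{eq:CY}, so $\t$ itself is a Calabi-Yau structure compatible with duality on the right. As non-degeneracy is preserved under multiplication by a nonzero scalar while the zero form is degenerate, the Calabi-Yau structures are exactly the nonzero multiples $c\,\t$ with $c\in\ok^\times$; they therefore constitute the nonzero part of $\ok\cdot\t$, which is what the statement means by one dimensional. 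The left case is treated identically, replacing $\t$ by the left modified trace $\lintg$ and~\eqref{rcomp} by its left analogue.

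The main point to handle with care, rather than a genuine obstacle, is the mismatch between the two non-degeneracy conditions: the Calabi-Yau condition~\eqref{eq:CY} constrains only pairings between two projective objects, whereas the modified-trace non-degeneracy~\eqref{eq:nondegeneracy} allows an arbitrary $M\in\cat$. This asymmetry is harmless in both directions, since the upper bound does not use non-degeneracy at all, and for the realisation step the stronger statement supplied by Theorem~\ref{thm:main} implies the weaker one. The only remaining subtlety is purely linguistic: the non-degeneracy requirement is not a linear condition, so one must read the phrase ``one-dimensional space'' as the assertion that the Calabi-Yau structures, which exclude the zero form, are nonetheless parametrised by the one-dimensional vector space $\ok\cdot\t$.
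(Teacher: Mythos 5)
Your proof is correct and follows essentially the same route as the paper, which obtains Corollary~\ref{cor:1.1} precisely by combining Theorem~\ref{thm:CYcomp} (identifying Calabi--Yau structures compatible with duality with non-degenerate modified traces) with Theorem~\ref{thm:main} (one-dimensionality of the space of modified traces and non-degeneracy of the one given by the symmetrised integral). Your explicit handling of the mismatch between the two non-degeneracy conditions~\eqref{eq:CY} and~\eqref{eq:nondegeneracy}, and of the reading of ``one dimensional'', is a careful refinement the paper leaves implicit, but it does not alter the argument.
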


To the best of our knowledge, Theorem
\ref{thm:main} is the first result relating
modified traces with  general concepts in the theory of Hopf algebras.
The power of this theorem is in the generality of its assumptions.
 The existence and uniqueness of the modified trace  was  proven previously in~\cite{Azat-Runkel}
for finite pivotal braided  categories with
a non-degenerate monodromy, called factorisable
(see also  \cite[Cor.\,3.2.1]{GKP} for a more technical statement for a larger class of categories).
The equality of the right and left modified traces  was
known in the ribbon case only.
However, Theorem \ref{thm:main} 
does not require braiding  and 
allows to compute the modified trace in all cases where the integral and pivot are known explicitly. We give few infinite
families  of unimodular Hopf algebras
with explicit formulas for the integral and pivots.

Theorem \ref{thm:main} was a starting point for further generalizations in  the following settings:
\begin{itemize}
\item
finite-dimensional pivotal {\it non-unimodular}
Hopf algebras in
\cite{FG} and \cite{GKPnew},
\item 
Hopf $G$-coalgebras in \cite{Ha}
 providing  examples of $G$-graded categories of~\cite{Turaev-G, Vi-G},
\item
quasi-Hopf algebras \cite{BGR,SS},
\item
module  categories in \cite{FG}.
\end{itemize}

As we already mentioned, our results were  crucially used in
\cite{RGP, RGP1} to construct a TQFT
with  stronger monoidality and functoriality properties than the one in~\cite{Kerler:2001}.
This was done for any finite-dimensional factorisable ribbon Hopf algebra over a field
 of characteristic zero.

In \cite{BBG}, combining the modified trace on the finite-dimensional restricted quantum $\mathfrak{sl}(2)$ at a root of unity
with the Hennings
construction, a {\em logarithmic Hennings} invariant was defined for any 3-manifold with a colored link inside.
An interesting feature of this construction is
that it works for a not necessarily quasi-triangular Hopf algebra.
The results of this paper suggest 
that the invariants of~\cite{BBG} can be extended to
   finite-dimensional 
Lusztig quantum groups at a root of unity which might not allow braiding.

To prove Theorem \ref{thm:main}, we  first show that 
the right partial trace property for the regular representation implies the general property  
in \eqref{rpartial}, and similarly for the left property. This is the context of the so-called Reduction Lemma
that is proven in  Section \ref{sec:pivot-str} in the general context
of finite pivotal $\ok$-linear categories. 

Then we study the centralizer algebras $\End_{\UH}(H\otimes W)$ for any  $W\in \Hmod$.
 An explicit algebra isomorphism
between $\End_\UH(\UH\tensor \UH)$ and 
${\rm{Mat}}_{n,n}(\UH^{\rm op})$ for any $n$-dimensional Hopf algebra $\UH$
 allows us to reduce the
right partial trace property to the defining relation
for the symmetrised right integral.

It is worth to mention the following 
consequence of Theorem \ref{thm:main}.

\begin{proposition}\label{prop:mod-tr-ss}
Let $H$ be a finite-dimensional unimodular pivotal Hopf algebra over a field~$\ok$. 
The right categorical trace $\tr^{\cat}_H$ and its left version $^{\cat}\!\tr_H$
are non zero if and only if $\Hmod$ is semisimple
and in this case coincide  up to a scalar with the trace maps
\be\label{eq:cat-tr-rintg}
f\mapsto \rintg\bigl(f(\one)\bigr)\qquad\text{and}\qquad
f\mapsto \lintg\bigl(f(\one)\bigr)\gc
\ee
respectively, where $f\in\End_H(H)$.
\end{proposition}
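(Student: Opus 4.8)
The plan is to leverage Theorem~\ref{thm:main}, which already identifies the unique right modified trace on $\Hpmod$ with $f\mapsto\rintg(f(\one))$, together with the general fact stated in the introduction that the categorical trace $\tr^\cat$ is itself a (right) modified trace. First I would observe that $\tr^\cat_H$, restricted to $\End_H(H)$ and extended over all of $\Hpmod$ by the partial-trace and cyclicity properties, is a right modified trace. By the uniqueness part of Theorem~\ref{thm:main}, any right modified trace on $\Hpmod$ is a scalar multiple of the one given by the symmetrised right integral. Hence there is a scalar $\lambda\in\ok$ with $\tr^\cat_P = \lambda\,\t_P$ for all $P\in\proj(\cat)$; in particular, evaluated on $\End_H(H)$, we get $\tr^\cat_H(f)=\lambda\,\rintg(f(\one))$. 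This is exactly the claimed proportionality, so the content reduces to deciding precisely when $\lambda\neq 0$, i.e. when $\tr^\cat_H$ is non-zero, and to showing this happens iff $\Hmod$ is semisimple.

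For the direction ``semisimple $\Rightarrow$ non-zero'', I would use that in a semisimple finite tensor category every object is projective, so $\proj(\cat)=\cat$ and the categorical trace is genuinely defined (and non-zero) on all objects; concretely, $\tr^\cat_H(\id_H)=\dim_\cat(H)$, the categorical dimension of the regular representation, which in the semisimple case is a non-zero element of $\ok$. More invariantly, in the semisimple case $\tr^\cat$ is itself a \emph{non-degenerate} modified trace, so it cannot vanish on $\proj(\cat)$; this forces $\lambda\neq 0$ and then the two traces agree up to the non-zero scalar $\lambda$.

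For the converse ``non-zero $\Rightarrow$ semisimple'', the cleanest route is the remark made just after the definition of modified trace in the excerpt: the categorical trace $\tr^\cat$ is non-zero on $\proj(\cat)$ \emph{if and only if} $\cat$ is semisimple. I would cite this directly. If one prefers a self-contained argument, the idea is that a non-zero $\tr^\cat_H$ on $\End_H(H)$ forces the categorical dimension of the indecomposable summands of the regular representation to be non-zero; but for a projective cover $P$ of a simple object in a non-semisimple category, $\tr^\cat_P$ vanishes because the identity factors through morphisms whose partial traces are forced to zero by the non-trivial radical filtration, contradicting non-vanishing. Finally the left-hand statement for $^\cat\!\tr_H$ and $\lintg$ follows by the same argument applied to the left modified trace, using the symmetrised left integral from Theorem~\ref{thm:main}.

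The main obstacle I anticipate is the converse direction if one wants it fully self-contained rather than quoting the stated equivalence: showing that non-vanishing of $\tr^\cat$ on the regular representation genuinely forces semisimplicity requires controlling categorical dimensions of indecomposable projectives in a non-semisimple finite category, which is where the real work lies. Since the excerpt explicitly records the ``iff'' as a known fact, I would simply invoke it and keep the proof short, reducing the whole proposition to the uniqueness and explicit formula already furnished by Theorem~\ref{thm:main}.
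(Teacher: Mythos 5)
Your main line of argument is correct, but it takes a genuinely different route from the proof the paper actually gives --- and, as it happens, it coincides with the alternative deduction the paper itself records in the remark immediately following its proof. The paper's proof of Proposition~\ref{prop:mod-tr-ss} is a direct Hopf-theoretic computation, deliberately independent of Theorem~\ref{thm:main}: writing any $f\in\End_H(H)$ as right multiplication $r_x$ with $x=f(\one)$ (Lemma~\ref{lem:Uop-End}), it invokes Radford's trace formulas \cite[Thm.\,10.4.1]{Ra-book}, $\tr_H(f)=\rint\bigl(S(\coint'')f(\coint')\bigr)$ and $\tr_H(f)=\rint\bigl(S(f(\coint''))\coint'\bigr)$, and computes directly that $\tr_H(l_\pivot\circ r_x)=\epsilon(\coint)\,\rintg(x)$ and $\tr_H(l_{\pivot^{-1}}\circ r_x)=\epsilon(\coint)\,\lintg(x)$; the equivalence with semisimplicity then falls out of the single fact that $\epsilon(\coint)\neq 0$ if and only if $H$ is semisimple \cite[Cor.\,10.3.3]{Ra-book}. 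This buys an explicit proportionality constant $\epsilon(\coint)$ and both directions of the ``iff'' in one computation. Your route --- the categorical trace restricted to $\Hpmod$ is a right modified trace, uniqueness in Theorem~\ref{thm:main} forces $\tr^{\cat}_P=\lambda\,\t_P$ for all $P$, and the criterion that $\tr^{\cat}$ is non-zero on $\proj(\cat)$ iff $\cat$ is semisimple decides when $\lambda\neq 0$ --- is logically sound (the proof of Theorem~\ref{thm:main} in Section~\ref{sec:proof} nowhere uses the Proposition, so there is no circularity), and it is precisely the paper's remark, where the criterion is attributed to \cite[Rem.\,4.6]{Azat-Runkel}. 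What your approach buys is brevity; what it costs is quoting that criterion as external input and losing the explicit scalar.

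Two of your auxiliary claims, however, are false or unsupported and should be dropped. First, $\tr^{\cat}_H(\id_H)=\dim_{\cat}(H)$ need \emph{not} be non-zero in the semisimple case: for $H=\ok^G$, the Hopf algebra of functions on a finite group over $\ok=\mathbb F_p$ with $p$ dividing $|G|$ --- semisimple as an algebra, unimodular, and pivotal with $\pivot=\one$ --- one has $\dim_{\cat}(H)=|G|\cdot 1_\ok=0$. Second, non-degeneracy of $\tr^{\cat}$ in the semisimple case over an arbitrary field would require knowing that all simple objects have non-zero categorical dimension, which you do not establish and which is delicate in positive characteristic. Neither claim is needed: if $\Hmod$ is semisimple then $\one$ is projective and $\tr^{\cat}_{\one}(\id_{\one})=1\neq 0$, so the family is non-zero on $\proj(\cat)$. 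The genuinely non-trivial direction is the one you cite, and its standard proof is cleaner than your radical-filtration sketch: $\tr^{\cat}_P(f)$ is the composite $\one\xrightarrow{\;\coev_P\;}P\tensor P^*\xrightarrow{\;f\tensor\id\;}P\tensor P^*\xrightarrow{\;\tev_P\;}\one$, which, if non-zero, exhibits $\one$ as a retract of the projective object $P\tensor P^*$ and hence forces semisimplicity.
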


In Section~\ref{sec:Hopf} 
we give a Hopf-theoretic proof of Proposition~\ref{prop:mod-tr-ss} 
without using  Theorem \ref{thm:main}.

Proposition~\ref{prop:mod-tr-ss} shows that the symmetrised integral $\rintg$ provides a non-trivial generalisation of the categorical trace for non-semisimple  categories $\Hpmod$.
In this case, the categorical trace  is identically zero, however the symmetrised integral (or rather the corresponding modified trace) is not.
In particular, for a finite group $G$ and its group algebra over $\ok=\mathbb F_p$, the symmetrised integral, which is in this case just the integral, defines a non-degenerate  trace compatible with duality on the category of projective $\mathbb F_p[G]$-modules, even in the case
when the characteristic $p$ divides the order of the group.  This is a 
surprising application of our theorem to the classical modular representation theory, that
will be discussed in more details in  Section~\ref{sec:Hopf}.  
 
In Section \ref{qgroups}, we consider   finite-dimensional Lusztig quantum groups at roots of unity in the simply laced cases and give explicit formulas for
their  integral, cointegral,   symmetrised integral, and hence an explicit expression for the modified trace $\t_H$.
We expect similar formulas to hold in general type.

In type $A_1$, using Theorem \ref{thm:main}
together with formulas for minimal idempotents
given in~\cite{Gainutdinov:2007tc}, we obtain an alternative derivation
of \cite{BBG} formulas for the modified trace 
for all endomorphisms of indecomposable projectives.
This illustrates how the
 modified trace
can be effectively  computed from the symmetrised integral.

The paper is organised as follows.
In Section~\ref{sec:traces}, we collect results on
traces in finite abelian categories. In Section~\ref{sec:pivot-str},
we study a relationhsip between the modified trace and
Calabi-Yau structures in finite pivotal categories and prove
Reduction Lemma. In Section~\ref{sec:Hopf}, after recalling
standard facts from the theory of Hopf algebras, we 
study properties of symmetrised integrals, in particular we show that they provide a non-degenerate symmetric pairing between the center $Z(H)$ and ${\rm HH}_0(H)$, and then prove Proposition \ref{prop:mod-tr-ss}. 
Section~\ref{sec:HH-decomp} contains a detailed analysis of the centralizer algebras
$\End_{\UH}(H\otimes W)$.
Section~\ref{sec:proof} contains our proof of Theorem \ref{thm:main}. Section~\ref{qgroups}  provides an application of our main theorem to restricted quantum groups of types $ADE$:
we compute the modified trace via a calculation of $\rint_\pivot$. Then in Section~\ref{sec:A1case}  we provide more detailed analysis for $\mathfrak{sl}_2$ case. Finally, Appendices contain proofs of several lemmas.

\subsection*{Acknowledgements} The authors are grateful 
to NCCR SwissMap for generous support
and to   Nathan Geer, Bertrand Patureau,
Marco de Renzi and Ingo Runkel
for helpful discussions. 
The authors are also thankful to the organizers of conference ``Invariants in low-dimensional Geometry \& Topology" in Toulouse in May, 2017, where a substantial part of this work was done. CB and AMG also thank Institute of Mathematics in Zurich University for kind hospitality during 2017.
AMG is supported by CNRS and ANR project JCJC ANR-18-CE40-0001, and also thanks the Humboldt Foundation for a partial financial support.
 
\section{Traces on finite categories}\label{sec:traces}
Throughout this section $A$ is a finite-dimensional
 $\ok$-algebra.
Our aim  is to show that 
any symmetric linear form $t$ on $A$ determines
a family of trace functions on $\Apmod$
\be\label{Atraces}
\{t_P\colon \End_A(P)\rightarrow \ok\}_{P\in \Apmod}\ ,
\ee
i.e.\ linear maps 
satisfying cyclicity \eqref{E:Tracefggf}.
We will also show that if $t\in \UA^*$ is non-degenerate,  then the traces \eqref{Atraces}  are non-degenerate  
in the
sense of~\eqref{eq:nondegeneracy}. 

\subsection*{General Setting}
We assume that $\ok$ is a field and $\cat$ is an
additive category.
We call $\cat$ \textit{$\ok$-linear} if $\Hom_\cat(X,Y)$ 
is a vector space over $\ok$ for all $X,Y\in \cat$ and the composition of morphisms is $\ok$-bilinear. All
categories used in this paper are assumed to be $\ok$-linear.

An abelian category $\cat$ is called \textit{finite}
if it is equivalent to the category $\Amod$
of finite-dimensional left $A$-modules 
for some  finite-dimensional  $\ok$-algebra~$A$. In other words, $\cat$ is abelian and has finitely many isomorphism classes of simples, length of any object is finite, it has  enough projectives and Hom spaces are finite-dimensional.
An algebra $A$ can be constructed as $\End_\cat(G)$ for a projective generator $G\in\cat$, see e.g.~\cite{Droz}.
Then, the equivalence functor  $\Hom_\cat(-,G)\colon \cat \to \Amod$ sends $G$ to the regular representation $A$. 
Therefore, without loss of generality   in this section we will assume that $\cat=\Amod$.
  We will also use the notation $\Apmod$ for
the full subcategory of projective $A$-modules.

We first show that 
a family of traces \eqref{Atraces} on $\Apmod$ defines
 a symmetric linear form on~$A$.
 Let us denote by $A^\op$ the  algebra with the opposite multiplication. 

\begin{lemma}\label{lem:Uop-End}
 We have the isomorphism of algebras
\be \label{eq:hop=end}
r\colon A^{\op} \xrightarrow{\;\sim\;} \End_A(A)\ 
\ee
given by
\be
 r(x) = r_x, \qquad r^{-1}(f) = f(\one)
\ee
where by $r_x$ we denote the right multiplication with $x$.
\end{lemma}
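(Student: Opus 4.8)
The plan is to verify that $r$ and the candidate inverse $f \mapsto f(\one)$ are mutually inverse $\ok$-linear maps and that $r$ turns the opposite multiplication on $A$ into composition in $\End_A(A)$; everything here is routine, so the only point demanding attention is bookkeeping of the order of multiplication.

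First I would confirm that $r$ lands in the right place. For each $x \in A$, right multiplication $r_x \colon A \to A$, $a \mapsto ax$, must be a morphism in $\Amod$, i.e.\ a left $A$-module map. This follows from associativity: for $b \in A$ we have $r_x(ba) = (ba)x = b(ax) = b\, r_x(a)$, so $r_x$ commutes with the left $A$-action. Hence $r$ is a well-defined map $A \to \End_A(A)$, and it is $\ok$-linear since $r_{x+\lambda y} = r_x + \lambda\, r_y$ for $\lambda \in \ok$.

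Next I would compute the effect on composition. For $x,y,a \in A$,
\[
(r_x \circ r_y)(a) = r_x(ay) = (ay)x = a(yx) = r_{yx}(a),
\]
so $r_x \circ r_y = r_{yx}$. Because the product of $x$ and $y$ in $A^{\op}$ is $yx$, this says precisely that $r$ is an algebra homomorphism out of $A^{\op}$, sending $\one$ to $\id_A$. The appearance of $A^{\op}$ rather than $A$ is the one subtlety, and it is forced by exactly this order reversal; this is the only ``obstacle,'' and it is a purely formal one.

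Finally I would establish bijectivity by producing the inverse explicitly. The key observation is that any $f \in \End_A(A)$ is determined by its value on the unit: since $f$ is a left module map, $f(a) = f(a \cdot \one) = a\, f(\one) = r_{f(\one)}(a)$, whence $f = r_{f(\one)}$. Thus $r\bigl(f(\one)\bigr) = f$, while conversely $r_x(\one) = \one\, x = x$ shows that $f \mapsto f(\one)$ is a two-sided inverse of $r$. This proves that $r$ is a bijection, and together with the previous paragraph that it is an isomorphism of algebras $A^{\op} \xrightarrow{\sim} \End_A(A)$.
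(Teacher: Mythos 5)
Your proof is correct and follows essentially the same route as the paper: the paper's (terser) proof likewise checks that $r$ and $f\mapsto f(\one)$ are mutually inverse and records the order reversal $r(xy)=r_y\circ r_x$, which is exactly your computation $r_x\circ r_y=r_{yx}$. Your added details (well-definedness of $r_x$ as a left module map, $\ok$-linearity, and the explicit verification $f=r_{f(\one)}$) are just the ``straightforward checks'' the paper leaves implicit.
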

\begin{proof}
It is straightforward to check 
that the maps $r$ and $r^{-1}$ defined in
\eqref{eq:hop=end}
are inverse to each other.
Moreover, for any $x,y \in A$ we have
$r(xy)=r_y r_x$ and for any $f,g \in \End_A(A)$,
$r^{-1}(gf)=(gf)(\one)=f(\one) g(\one)$,  where in the last equality we used the intertwining property of $g$.
\end{proof}

Suppose we are given trace functions \eqref{Atraces}.
Then, in particular, for the regular $A$-module~$A$, we have
the trace function
$t_A\colon\End_A(A)\rightarrow \ok$.
Lemma \ref{lem:Uop-End} shows that
 $t_A$ defines a symmetric linear form $t$ on $A^{\op}$.
 Since the flip of multiplication is irrelevant
 in the argument  of a
 symmetric form, we have
$t\in A^*$.

To argue that the converse is also true: given a symmetric form 
$t\in A^*$ we can  extend it uniquely to 
a family of traces on $\Apmod$, we will 
 need a categorical notion of the 
$0^{\rm th}$-Hochschild homology.

\subsection*{Traces of categories}
The $0^{\text{th}}$-Hochschild homology or trace 
 of a $\ok$-linear category $\mathcal C$
 is defined by 
\begin{equation} \label{eq:HH0-C-def}
\HH(\mathcal C):=\frac{\bigoplus_{X\in \mathcal C} \;\End_{\mathcal C}(X)}
{[\mathcal{C},\mathcal{C}]}
\end{equation}
where
$$
[\mathcal{C},\mathcal{C}]:={\rm Span}\{f\circ g-g\circ f\;|\;
f\in \Hom_{\mathcal{C}}(X,Y),\;
g\in \Hom_{\mathcal{C}}(Y,X), \; X,Y\in \mathcal{C}
\}\ .
$$
The image of $f\in \End_{\mathcal{C}}(X)$ in $\HH(\mathcal C)$ will be called its trace class and denoted by $[X,f]$ or simply by $[f]$.

In particular,  $0^{\text{th}}$-Hochschild homology of an algebra
$A$ (viewed as a category with one object) is
\be\label{eq:HH0-alg-def}
 \HH(A):=\frac{A}{[A,A]}\quad\text{with}\quad
[A,A]={\rm{Span}}\{xy-yx\;|\;  x,y\in A\}.
\ee
Again the image of $x\in A$ in $\HH(A)$ will be called its trace class and denoted by $[x]$.

Actually, 
$\HH(A)$ and $\HH(\Apmod)$ are isomorphic.
To show this we will need some preparation.

\begin{lemma}\label{lem:decomp-id}
For any projective $A$-module $P$ there exists a decomposition of the identity:
\be \label{E:identities}
\id_P=
\sum_{i\in I}
 a_i\circ \id_A
\circ b_i
\ 
\ee 
for some finite set $I$ and morphisms $a_i\colon A\to P$ and $b_i\colon P\to A$.
\end{lemma}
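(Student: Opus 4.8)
The plan is to exploit the defining property of a projective module: every projective $P$ is a direct summand of a free module $A^{\oplus n}$ for some $n$. First I would fix such a presentation, i.e.\ choose a split injection $\iota\colon P\hookrightarrow A^{\oplus n}$ together with a splitting $\pi\colon A^{\oplus n}\to P$ satisfying $\pi\circ\iota=\id_P$. This is the standard characterisation of projective objects in $\Amod$, and it is exactly the structure that lets us ``factor the identity of $P$ through copies of the regular module $A$''.

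Next I would break the free module into its summands. Write $A^{\oplus n}=\bigoplus_{i\in I} A$ with $I=\{1,\dots,n\}$, and let $\sigma_i\colon A\to A^{\oplus n}$ and $\rho_i\colon A^{\oplus n}\to A$ be the canonical inclusion and projection of the $i$-th summand, so that $\sum_{i\in I}\sigma_i\circ\rho_i=\id_{A^{\oplus n}}$ and $\rho_j\circ\sigma_i=\delta_{ij}\id_A$. Setting $a_i:=\pi\circ\sigma_i\colon A\to P$ and $b_i:=\rho_i\circ\iota\colon P\to A$, I then compute
\begin{equation}
\sum_{i\in I} a_i\circ b_i
=\pi\circ\Bigl(\sum_{i\in I}\sigma_i\circ\rho_i\Bigr)\circ\iota
=\pi\circ\id_{A^{\oplus n}}\circ\,\iota
=\pi\circ\iota=\id_P.
\end{equation}
Since $a_i\circ b_i=a_i\circ\id_A\circ b_i$, this is precisely the asserted decomposition \eqref{E:identities}.

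There is essentially no serious obstacle here; the only point requiring a little care is matching the decomposition to the precise form demanded by the statement (the insertion of $\id_A$ between $a_i$ and $b_i$), which is automatic since $b_i\colon P\to A$ and $a_i\colon A\to P$ compose through $A$. If one prefers to avoid explicitly invoking ``summand of a free module,'' the same conclusion follows from the categorical fact that in a finite $\ok$-linear category with projective generator $A$, the identity of any object in $\Apmod$ lies in the image of the evaluation map $\bigoplus \Hom_\cat(A,P)\otimes\Hom_\cat(P,A)\to\End_\cat(P)$; but the free-summand argument is the most elementary and self-contained, so that is the route I would take.
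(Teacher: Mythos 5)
Your proof is correct, but it takes a slightly different route from the paper's. The paper first splits $P$ into indecomposable summands $P\simeq\bigoplus_{i\in I}P_i$ (Krull--Schmidt) and then realises each \emph{indecomposable} $P_i$ as a direct summand of the regular module $A$ itself, via maps $x_i\colon P_i\hookrightarrow A$ and $y_i\colon A\twoheadrightarrow P_i$ with $y_i\circ x_i=\id_{P_i}$; the maps $a_i,b_i$ are then the obvious compositions through $P_i$. You instead realise $P$ directly as a summand of a free module $A^{\oplus n}$ and compose the splitting $\pi,\iota$ with the canonical inclusions $\sigma_i$ and projections $\rho_i$ of the summands, using $\sum_i\sigma_i\circ\rho_i=\id_{A^{\oplus n}}$. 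Your argument is more elementary and more robust: it needs only that $P$ is a summand of a finite free module (automatic here since all modules are finite-dimensional), and it avoids any appeal to the decomposition into indecomposables or to the fact that each indecomposable projective occurs in $A$. What the paper's route buys is contact with primitive idempotents: its $a_i,b_i$ come from right multiplication by the idempotents $\pi_i$ in $A=\bigoplus_i P_i$, which the paper exploits immediately afterwards in the examples following the lemma. In fact your construction is essentially the one the paper itself records later in Remark~\ref{rem:indP-ext}, where $\id_P=p_P\circ j_P$ through $A^{\oplus m}$ is used as an alternative decomposition, so the two approaches coexist in the paper and are interchangeable for all subsequent uses (notably Proposition~\ref{cor:ext}, which shows the extension is independent of the choice of decomposition).
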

\begin{proof}
Recall that
any finitely generated projective $A$-module $P$ splits as a direct sum of indecomposables ones:
\be\label{eq:P-decomp}
P\simeq\bigoplus_{i\in I} P_i \ ,
\ee
for a finite indexing set $I$. Here several direct summands can be isomorphic.
We  further observe that each indecomposable  $P_i$ 
can be realised as a direct summand in $A$, since 
the regular module $A$ is a projective generator of $\Apmod$.
We have therefore an injective map $x_i\colon P_i\hookrightarrow A$ and a surjective map $y_i\colon A \twoheadrightarrow P_i$.
Fixing  these maps such that $y_i\circ x_i = \id_{P_i}$ we can define
$a_i\colon A \rightarrow P$ and  $b_i\colon P\rightarrow  A$
as the compositions 
\be
 a_i\colon A \xrightarrow{\;y_i\;}   P_i\hookrightarrow P \quad \text{and} \quad
 b_i\colon P\twoheadrightarrow P_i \xrightarrow{\;x_i\;} A\gp
 \ee
They clearly satisfy~\eqref{E:identities}.
\end{proof}

We note that the decomposition of $\id_P$ in~\eqref{E:identities} is not unique, and we provide several examples.

\noindent{\bf Example.}
For $P=A$, we can use the trivial decomposition  $a_1=b_1=\id_A$. However we can make another choice, the one corresponding to $a_i$ and $b_i$ in the proof of Lemma~\ref{lem:decomp-id}: 
$a_i=b_i\colon x\mapsto x\pi_i$, for $x\in A$ and here $\pi_i$ is the primitive idempotent  corresponding to the direct summand $P_i$ in the decomposition $A=\oplus_{i=1}^l P_i$. 
The identity~\eqref{E:identities} is clearly satisfied because $\sum_{i=1}^l \pi_i =  \one$.\\
In a more general case of  $P=A^{\oplus m}$, we also have two natural decompositions. For the first one,   we set $a_j\colon A\hookrightarrow A^{\oplus m}$ and  $b_j\colon  A^{\oplus m}\twoheadrightarrow A$ such that $b_i\circ a_j = \delta_{i,j}\, \id_A$, then~\eqref{E:identities} holds. For the other choice, let $V$ be the $m$-dimensional multiplicity space with a basis $e_j$, $1\leq j\leq m$, and we can then define for each pair $i=(k,j)$
the  maps $a_i$, $b_i$ as
\begin{align}\label{eq:Am-ab}
a_{(k,j)}\colon &A\to A^{\oplus m}\ ,
&b_{(k,j)}\colon &A^{\oplus m} \to A\ ,\\
&x\mapsto x\pi_k\tensor e_j\ ,  & &x\tensor e_n \mapsto \delta_{n,j}\, x\pi_k\ , \notag
\end{align}
for $x\in A$, and $1\leq k\leq l$ and  $1\leq n,j\leq m$.
It is then straightforward to check the identity~\eqref{E:identities} on $x\tensor e_n$ for any $x\in A$ and $1\leq n\leq m$.
 
\begin{proposition} \label{Meq}
 For a finite-dimensional algebra $A$, 
there is an isomorphism 
\begin{align} \label{eq:Phi-map-def}
\Phi: \HH(A)&\xrightarrow{\sim}\HH ( \Apmod)\ ,\\
[x]&\mapsto [r_x]
\nonumber
\end{align} 
with the inverse map
\begin{align}\label{eq:Psi-map-def}
\Psi: \HH(\Apmod)&\xrightarrow{\sim}\HH (A)\ ,\\
[P,f]&\mapsto \sum_{i\in I}\left[ (b_i\circ f\circ a_i)(\one)\right] \ ,\nonumber
\end{align} 
for any sets $\{a_i\colon A\to P\}_{i\in I}$ and $\{b_i\colon P\to A\}_{i\in I}$ satisfying~\eqref{E:identities}. 
\end{proposition}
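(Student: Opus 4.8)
The plan is to reduce everything to a single trace map and one cyclicity identity. Introduce $\theta\colon \End_A(A)\to \HH(A)$, $\theta(g)=[g(\one)]$, which is the composite of the algebra isomorphism $r^{-1}$ of Lemma~\ref{lem:Uop-End} with the projection $A\to\HH(A)$. First I would record that $\theta$ is a trace: since $(g\circ h)(\one)=h(\one)\,g(\one)$ by Lemma~\ref{lem:Uop-End} while $[uv]=[vu]$ in $\HH(A)$, one gets $\theta(g\circ h)=\theta(h\circ g)$ for all $g,h\in\End_A(A)$. This also makes $\Phi$ well defined: if $x-y=uv-vu\in[A,A]$ then $r_{x-y}=r_v\circ r_u-r_u\circ r_v$ is a commutator of endomorphisms of $A$, so $[r_x]=[r_y]$ in $\HH(\Apmod)$; hence $\Phi([x])=[r_x]$ descends to $\HH(A)$.

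The technical heart is the following identity, obtained from a single use of cyclicity of $\theta$: for any projective $P$, any resolution $\id_P=\sum_i a_i\circ b_i$ as in Lemma~\ref{lem:decomp-id}, and any $\alpha\colon A\to P$, $\beta\colon P\to A$,
\[
\sum_i \theta\bigl(b_i\circ(\alpha\circ\beta)\circ a_i\bigr)=\theta(\beta\circ\alpha).
\]
Indeed, writing $b_i\circ\alpha\circ\beta\circ a_i=(b_i\circ\alpha)\circ(\beta\circ a_i)$ with both factors in $\End_A(A)$ and permuting them under $\theta$ turns the left side into $\sum_i\theta\bigl(\beta\circ(a_i\circ b_i)\circ\alpha\bigr)=\theta(\beta\circ\id_P\circ\alpha)$. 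The crucial feature is that the right-hand side is independent of the chosen resolution.

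Using this I would set $\Psi_D(f):=\sum_i\theta(b_i\circ f\circ a_i)$ for a resolution $D=\{a_i,b_i\}$ of $\id_P$ and prove it is independent of $D$: given a second resolution $D'=\{a'_j,b'_j\}$, factor $f=\sum_j a'_j\circ(b'_j\circ f)$ and apply the identity termwise to get $\Psi_D(f)=\sum_j\theta(b'_j\circ f\circ a'_j)=\Psi_{D'}(f)$. The same identity shows $\Psi$ descends to $\HH(\Apmod)$: for $\phi\colon X\to Y$ and $\psi\colon Y\to X$, compute $\Psi(\psi\circ\phi)$ with a resolution of $\id_X$, recognise each summand $\theta(b_i\circ\psi\circ\phi\circ a_i)$ as $\Psi\bigl((\phi\circ a_i)\circ(b_i\circ\psi)\bigr)$ via the identity, and sum using $\sum_i a_i\circ b_i=\id_X$ to obtain $\Psi(\psi\circ\phi)=\Psi(\phi\circ\psi)$; thus $\Psi$ kills $[\Apmod,\Apmod]$ and the formula~\eqref{eq:Psi-map-def} defines a map on $\HH(\Apmod)$.

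Finally I would verify the composites are identities. For $\Psi\circ\Phi$, use the trivial resolution $a_1=b_1=\id_A$ of $\id_A$, giving $\Psi([r_x])=[r_x(\one)]=[x]$. For $\Phi\circ\Psi$, with $x_i=(b_i\circ f\circ a_i)(\one)$ one has $r_{x_i}=b_i\circ f\circ a_i$, so $\Phi(\Psi([P,f]))=\sum_i[A,\,b_i\circ f\circ a_i]$; applying the commutator relation in $\HH(\Apmod)$ to the pair $f\circ a_i\colon A\to P$ and $b_i\colon P\to A$ rewrites this as $\sum_i[P,\,f\circ a_i\circ b_i]=[P,\,f\circ\id_P]=[P,f]$. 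The main obstacle throughout is the well-definedness of $\Psi$, both its independence of the resolution and its vanishing on commutators; the single cyclicity identity above is exactly what reduces all of these checks to routine bookkeeping.
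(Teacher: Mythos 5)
Your proof is correct and takes essentially the same route as the paper's Appendix~\ref{appA}: the same maps $\Phi$ and $\Psi$, with independence of the decomposition, cyclicity of the family, and both composites all reduced to cyclicity in $\HH(A)$ via Lemma~\ref{lem:Uop-End} together with insertions of decompositions of the identity. Your single identity $\sum_i\theta\bigl(b_i\circ\alpha\circ\beta\circ a_i\bigr)=\theta(\beta\circ\alpha)$ simply packages the paper's repeated double-sum manipulations into one reusable statement — a tidy streamlining (and your explicit check that $\Phi$ descends, which the paper leaves implicit, is a small bonus), but not a genuinely different argument.
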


We provide the proof in Appendix~\ref{appA} for completeness.

\begin{proposition}\label{cor:ext}
 A symmetric linear form $t$
on a finite-dimensional algebra $A$ 
extends uniquely to a  family of trace maps $\{t_P\colon \End_A (P)\to \ok\}_{P\in \Apmod}$ where
\be \label{E:extension}
t_P(f)= \sum_{i=1}^k t\bigl((b_i\circ f\circ a_i)(\one)\bigr)\ ,
\qquad f\in\End_A(P)\ ,
\ee
for a given decomposition 
of $\id_P$ 
as in~\eqref{E:identities}.
In particular, we have
\be\label{eq:tAt}
 t_A(r_x)=t(x)\ ,\qquad  
x\in A\ .
\ee
\end{proposition}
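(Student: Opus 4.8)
The plan is to deduce the whole statement from the isomorphism $\Phi\colon \HH(A)\xrightarrow{\sim}\HH(\Apmod)$ of Proposition~\ref{Meq} by passing to the dual picture, in which both families of trace maps and symmetric forms become linear functionals on a Hochschild homology. First I would set up the elementary dictionary. A family of linear maps $\{t_P\colon \End_A(P)\to\ok\}_{P\in\Apmod}$ satisfies cyclicity~\eqref{E:Tracefggf} exactly when the induced map on $\bigoplus_{P}\End_A(P)$ vanishes on $[\Apmod,\Apmod]$, i.e.\ exactly when it factors through a linear functional $\HH(\Apmod)\to\ok$; this is because, for $f\colon P\to P'$ and $g\colon P'\to P$, the element $g\circ f-f\circ g$ lies in $[\Apmod,\Apmod]$ by definition, so that $[P,g\circ f]=[P',f\circ g]$ in $\HH(\Apmod)$. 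Dually, a form $t\in A^*$ is symmetric exactly when it vanishes on $[A,A]$, i.e.\ when it descends to a functional $\bar t\colon \HH(A)\to\ok$. Thus families of trace maps are the same data as $\HH(\Apmod)^*$, and symmetric forms are the same as $\HH(A)^*$.

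With this dictionary in place I would construct the extension by setting $\bar\tau:=\bar t\circ\Psi$, where $\Psi$ is the inverse isomorphism of Proposition~\ref{Meq}; its components $t_P:=\bar\tau|_{\End_A(P)}$ then automatically form a family of trace maps. Unwinding the formula~\eqref{eq:Psi-map-def} for $\Psi$ gives, for a chosen decomposition of $\id_P$ as in~\eqref{E:identities},
\be
t_P(f)=\bar t\bigl(\Psi([P,f])\bigr)=\sum_{i\in I}\bar t\bigl([(b_i\circ f\circ a_i)(\one)]\bigr)=\sum_{i\in I} t\bigl((b_i\circ f\circ a_i)(\one)\bigr),
\ee
which is exactly~\eqref{E:extension}; independence of this value from the chosen decomposition is inherited from the well-definedness of $\Psi$ established in Proposition~\ref{Meq}. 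Evaluating on $P=A$ with the trivial decomposition $a_1=b_1=\id_A$ gives $\Psi([A,r_x])=[r_x(\one)]=[x]$, hence $t_A(r_x)=\bar t([x])=t(x)$, which is~\eqref{eq:tAt}.

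For uniqueness I would argue in the same language. Any family of trace maps $\{t'_P\}$ extending $t$ corresponds to some $\bar\tau'\in\HH(\Apmod)^*$, and the requirement $t'_A(r_x)=t(x)$ says precisely that $\bar\tau'\circ\Phi=\bar t$ on $\HH(A)$, since $\Phi([x])=[r_x]$ and $\bar\tau'([r_x])=t'_A(r_x)$. Because $\Phi$ is an isomorphism, this forces $\bar\tau'=\bar t\circ\Phi^{-1}=\bar t\circ\Psi=\bar\tau$, and therefore $t'_P=t_P$ for all $P\in\Apmod$.

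The genuinely substantive input is Proposition~\ref{Meq}, namely that $\Phi$ and $\Psi$ are mutually inverse, well-defined isomorphisms; I expect the real work to sit there rather than in the present argument, which becomes purely formal once that is granted. The one subtle point to keep flagged here is that the cyclicity comparison $t_P(g\circ f)=t_{P'}(f\circ g)$ is between expressions computed from possibly \emph{different} decompositions of $P$ and $P'$, so it is essential both that $\Psi$ be decomposition-independent and that the equality $[P,g\circ f]=[P',f\circ g]$ already hold in $\HH(\Apmod)$; both are supplied by Proposition~\ref{Meq} and the definition of $[\Apmod,\Apmod]$.
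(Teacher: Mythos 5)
Your proposal is correct and follows essentially the same route as the paper: both reduce the statement to the isomorphism $\Phi\colon \HH(A)\xrightarrow{\sim}\HH(\Apmod)$ of Proposition~\ref{Meq} combined with the bijection between cyclic families $\{t_P\}$ and linear forms on $\HH(\Apmod)$, then read off~\eqref{E:extension} from the formula~\eqref{eq:Psi-map-def} for $\Psi$ and get~\eqref{eq:tAt} from the trivial decomposition of $\id_A$. Your write-up simply makes explicit the dictionary and the uniqueness step that the paper's proof states tersely.
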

\begin{proof}
We first note that there is a bijection between linear forms on $\HH(\Apmod)$ and  families of trace maps $\{t_P\colon \End_A (P)\to \ok\}_{P\in \Apmod}$ 
such that $t_P(f)= l([P,f])$ for a linear form $l$.
A symmetric linear form $t\colon A \rightarrow \ok$
provides
 a linear form on $\HH(A)$ which we also denote by $t$.
By Proposition \ref{Meq},
this defines  a linear form on $\HH(\Apmod)$
by the formula
\be
t_P(f)=t\circ\Psi\bigl([f]\bigr)
\ee
 for any $f\in\End_A(P)$ and $\Psi$ given in~\eqref{eq:Psi-map-def}. Since $\Psi$ is an isomorphism and it
 does not depend on the choice of the
 decomposition of $\id_P$,
we have the existence and uniqueness of the extension. 
Finally, the equality~\eqref{eq:tAt} is straightforward after using~\eqref{eq:Phi-map-def}.
\end{proof}

We remark that a result similar to Proposition~\ref{cor:ext} was also proven in~\cite[proof of Prop.\ 5.8 (1)]{Azat-Runkel}  (however in the case of non-degenerate traces).

\noindent{\bf Example.}
 We assume here that $P=A^{\oplus m}$ and demonstrate the use of the formula~\eqref{E:extension}.
The algebra of $A$-linear
endomorphisms of $A^{\oplus m}$ can be rewritten as a matrix algebra:
\be\label{eq:EndHV-Mat}
\End_A(A^{\oplus m})
\cong \Mat_{m,m}(A^{\op})
\ee
where $\Mat_{m,m}$ is the $m\times m$ matrix algebra and  we used Lemma~\ref{lem:Uop-End}.
With notation as in~\eqref{eq:Am-ab}, the isomorphism~\eqref{eq:EndHV-Mat}   sends a matrix $(h_{ij})$ to the endomorphism $x\tensor e_j \mapsto \sum_{r=1}^m x h_{rj}\tensor e_r$.
Let us choose $a_i$ and $b_i$ as in~\eqref{eq:Am-ab}.
From~\eqref{E:extension}, we then obtain 
the unique extension $t_{A^{\oplus m}}$ of the symmetric form $t$
\be\label{eq:t-oplus-m}
t^{\oplus m}(h) :=  t_{A^{\oplus m}}(h) =\sum_{i=1}^m t(h_{ii})
\ , \qquad h\in \End_A(A^{\oplus m})\ ,
\ee
where we used cyclicity of $t$ and on RHS we  identified $h$ with the corresponding element in 
$\Mat_{m,m}(A^{\op})$ 
under the  isomorphism in~\eqref{eq:EndHV-Mat}.

\begin{remark}\label{rem:indP-ext}
For  an indecomposable projective $A$-module $P$, we   can reformulate Proposition~\ref{cor:ext} in the following way. 
 Let us fix an injection $j\colon P\hookrightarrow A$ and projection $p\colon A\twoheadrightarrow P$ such that $p\circ j=\id_P$ --
 this identity provides 
a decomposition as in~\eqref{E:identities}. Then $j\circ p\in \End_{A}(A)$ is right multiplication by a 
primitive
idempotent $\pi$, and so $t_P(\id_P)=t(\pi)$.\\
If $P\in\Apmod$  is not necessarily indecomposable, then it can be realised as a direct summand of $A^{\oplus m}$ 
for some finite $m\in\mathbb{Z}_{>0}$, i.e.\ 
we have injective and surjective maps:
\be
j_P \colon \; P \hookrightarrow A^{\oplus m}\ , \qquad p_P \colon \; A^{\oplus m} \twoheadrightarrow P 
\ee 
such that the composition $p_P\circ j_P$ is identity on $P$ and $j_P\circ p_P$ is an idempotent in $\End_A( A^{\oplus m})$.
We then  get a decomposition of the form \eqref{E:identities}
with 
\be\label{eq:AmP-ab}
\tilde{a}_{(k,j)}\colon A\xrightarrow{a_{(k,j)}} A^{\oplus m} \xrightarrow{p_P} P\quad \text{and} \quad 
 \tilde{b}_{(k,j)}\colon P\xrightarrow{j_P} A^{\oplus m} \xrightarrow{b_{(k,j)}} A \ ,
 \ee 
 while $a_{(k,j)}$ and $b_{(k,j)}$ are defined as in~\eqref{eq:Am-ab}.
Then~\eqref{E:extension} for the choice~\eqref{eq:AmP-ab} gives the following expression for $t_P$:
\be\label{eq:tP-mug-def}
t_P\colon \; f\mapsto t^{\oplus m} \bigl(j_P \circ f \circ p_P\bigr) \ ,
\ee
with $t^{\oplus m}$ defined in~\eqref{eq:t-oplus-m}.
For certain proofs below it will be more convenient
to use the decomposition $\id_P=p_P\circ j_P$ instead of~\eqref{E:identities}
and this expression of~$t_P$. It is a consequence of Proposition~\ref{cor:ext} that the map~\eqref{eq:tP-mug-def} does not depend on the choices we made in the construction.
\end{remark}

\subsection*{Non-degeneracy}\label{CY}
Let us prove the equivalence of the different notions of
 non-degeneracy.

For  a finite-dimensional algebra $A$ over a field $\ok$,
we call a  linear form $t\in A^*$  \textit{non-degenerate} if the associated   bilinear pairing $(x, y)\mapsto t(xy)$ is non-degenarate, i.e.
$t(xy)=0$ for all $x\in A$ implies $y=0$.

\begin{theorem} \label{Thm:nondegeneracy}
For a finite-dimensional 
algebra $A$ with a
 symmetric linear form $t\in A^*$ the following three
 statements are equivalent:
 \begin{enumerate}
 \item
 $t$ is non-degenerate.
 \item
$\Apmod$ is Calabi-Yau with $t_P$  defined by \eqref{eq:tP-mug-def}.
 \item
 The
 pairings \eqref{eq:nondegeneracy}
$$ 
	\Hom_A(M,P) \times \Hom_A(P,M) \to \,\ok
	\quad , \quad
	(f,g) \mapsto t_P(f \circ g) \ 
$$
 are non-degenerate
for all $P\in \Apmod$ and $M\in \Amod$. 
 \end{enumerate}

\end{theorem}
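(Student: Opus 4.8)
The plan is to prove the cycle of implications $(1)\Rightarrow(3)\Rightarrow(2)\Rightarrow(1)$, of which the last two are short. For $(3)\Rightarrow(2)$ I restrict the family \eqref{eq:nondegeneracy} to the case when $M$ is itself projective; then for $V,W\in\Apmod$ the pairing $\Hom_A(M,P)\times\Hom_A(P,M)\to\ok$ with $M=V$, $P=W$ is exactly the Calabi--Yau pairing \eqref{eq:CY}, so its non-degeneracy is the Calabi--Yau property. For $(2)\Rightarrow(1)$ I specialise the Calabi--Yau pairing to $V=W=A$: writing endomorphisms of $A$ as $r_x,r_y$ via the isomorphism $r\colon A^{\op}\xrightarrow{\sim}\End_A(A)$ of Lemma~\ref{lem:Uop-End}, using $r_x\circ r_y=r_{yx}$ and the identity $t_A(r_z)=t(z)$ from \eqref{eq:tAt}, the pairing $(f,g)\mapsto t_A(f\circ g)$ becomes $(x,y)\mapsto t(yx)=t(xy)$ on $A\times A$, whose non-degeneracy is precisely the non-degeneracy of $t$.

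All the work is in $(1)\Rightarrow(3)$, which I would prove by reducing an arbitrary projective $P$ to the regular module. For the base case $P=A$, identify $\Hom_A(A,M)\cong M$ by $g\mapsto g(\one)$ and use $f\circ g=r_{(f\circ g)(\one)}$ together with \eqref{eq:tAt} to rewrite $(f,g)\mapsto t_A(f\circ g)$ as
\[
\Hom_A(M,A)\times M\longrightarrow\ok\ ,\qquad (f,m)\longmapsto t\bigl(f(m)\bigr)\ .
\]
I claim that $\lambda\colon\Hom_A(M,A)\to M^{*}$, $\lambda(f)=t\circ f$, is a linear isomorphism, so this pairing is the evaluation pairing of $M^{*}$ with $M$ and hence non-degenerate. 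Injectivity is the essential point: if $t(f(m))=0$ for all $m$, then $A$-linearity gives $t\bigl(a\,f(m)\bigr)=t\bigl(f(am)\bigr)=0$ for every $a\in A$, so $f(m)=0$ by non-degeneracy of $t$. Surjectivity follows because non-degeneracy of $t$ makes $A\to A^{*}$, $y\mapsto t(\,\cdot\,y)$, an isomorphism: given $\phi\in M^{*}$ one lets $f(m)$ be the unique element with $t\bigl(a\,f(m)\bigr)=\phi(am)$ for all $a$, and checks that $f$ is $A$-linear with $\lambda(f)=\phi$.

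To pass from $P=A$ to a general projective $P$, first note that for $P=A^{\oplus m}$ the formula \eqref{eq:t-oplus-m} expresses the pairing as the direct sum of $m$ mutually orthogonal copies of the $P=A$ pairing, hence non-degenerate. For arbitrary projective $P$, realise it as a summand of $A^{\oplus m}$ with $p_P\circ j_P=\id_P$ and put $e=j_P\circ p_P$. By \eqref{eq:tP-mug-def} and associativity of composition the $P$-pairing equals $t^{\oplus m}\bigl((j_P\bar f)\circ(\bar g\, p_P)\bigr)$, i.e. the $A^{\oplus m}$-pairing restricted along the split injections $\bar f\mapsto j_P\bar f$ and $\bar g\mapsto\bar g\,p_P$, whose images are the images of the idempotents $f\mapsto ef$ and $g\mapsto ge$. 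Cyclicity \eqref{E:Tracefggf} of $t^{\oplus m}$ shows these idempotents are mutually adjoint, since $t^{\oplus m}\bigl((ef)\circ g\bigr)=t^{\oplus m}\bigl(e\circ(f\circ g)\bigr)=t^{\oplus m}\bigl((f\circ g)\circ e\bigr)=t^{\oplus m}\bigl(f\circ(ge)\bigr)$. The restriction of a non-degenerate pairing to the images of two mutually adjoint idempotents is again non-degenerate (if $pu=u\neq0$ pick $w$ with $B(u,w)\neq0$; then $B(u,qw)=B(pu,w)\neq0$ with $qw$ in the image of $q$), so the $P$-pairing is non-degenerate, completing $(1)\Rightarrow(3)$.

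The main obstacle is the base case $P=A$: the whole force of the hypothesis is spent on showing that $\lambda$ is an isomorphism, which is the Frobenius-type duality $\Hom_A(M,A)\cong M^{*}$. Once this is in hand the passages $A\rightsquigarrow A^{\oplus m}\rightsquigarrow P$ are formal, relying only on cyclicity and the explicit trace formula \eqref{eq:tP-mug-def}.
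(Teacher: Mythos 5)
Your proposal is correct, and it takes a genuinely different route from the paper on the one hard implication. The paper cites \cite[Prop.~5.8]{Azat-Runkel} for the equivalence $(1)\Leftrightarrow(2)$, notes that $(3)\Rightarrow(2)$ is immediate since $(3)$ is the strongest statement, and proves $(1)\Rightarrow(3)$ existentially: given a nonzero $f\colon M\to P$, it fixes a projective cover $\pi_M\colon P_M\twoheadrightarrow M$, forms the surjection $\tilde p_M=\pi_M\circ p_M\colon A^{\oplus m}\twoheadrightarrow M$, observes that $j_P\circ f\circ\tilde p_M\in\End_A(A^{\oplus m})$ is nonzero, invokes non-degeneracy of $t^{\oplus m}$ to find $\tilde g$ pairing non-trivially with it, and pulls back $g=\tilde p_M\circ\tilde g\circ j_P$ using cyclicity and $p_P\circ j_P=\id_P$. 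You instead prove the Frobenius-type duality $\lambda\colon\Hom_A(M,A)\xrightarrow{\sim}M^*$, $f\mapsto t\circ f$, identifying the $P=A$ pairing with the evaluation pairing, and then pass to $A^{\oplus m}$ by orthogonal direct sum (via \eqref{eq:t-oplus-m}) and to a general summand $P$ by restricting along mutually adjoint idempotents, justified by cyclicity and \eqref{eq:tP-mug-def}. Both arguments are sound; yours buys three things: a self-contained cycle $(1)\Rightarrow(3)\Rightarrow(2)\Rightarrow(1)$ in which the external citation is replaced by the two-line specialization $V=W=A$ using Lemma~\ref{lem:Uop-End} and \eqref{eq:tAt}; the stronger structural conclusion that the pairing in $(3)$ is perfect, $\Hom_A(M,P)\cong\Hom_A(P,M)^*$, with both slots handled symmetrically (the paper spells out only one slot, the other being analogous); and an explicit proof of the non-degeneracy of $t^{\oplus m}$, which the paper's argument uses but leaves implicit (it does follow entrywise from \eqref{eq:t-oplus-m} and non-degeneracy of $t$, essentially your direct-sum step). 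The paper's proof is in turn shorter, avoiding the well-definedness checks in your surjectivity argument for $\lambda$, at the cost of routing through a projective cover of $M$.
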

\begin{proof}
The equivalence of the first two statements was proven
in \cite[Prop. 5.8]{Azat-Runkel}. Since the third statement is the strongest, it is enough  to show that it follows from the first
one.
For that
we need to show that 
for any $f\colon M\to P$
there exists a non-zero map $g\colon P\to M$ such that $t_P(f\circ g)\ne0$. The idea is to use non-degeneracy of the linear form $t^{\oplus m}$. Let us fix a projective cover
$P_M$  of $M$ with the canonical
  surjective map $\pi_M\colon P_M \twoheadrightarrow M$.
  Since any projective module is a direct summand of a projective generator, say  $A^{\oplus m}$ for some $m$, we have 
   surjective and injective maps:
  $$p_M\colon A^{\oplus m}\twoheadrightarrow P_M \quad
  {\text{and}}\quad j_M \colon \; P_M \hookrightarrow A^{\oplus m} .
$$  

Let us consider the surjective map $\tilde p_M= \pi_M \circ p_{M}\colon A^{\oplus m} \twoheadrightarrow M$. 
 By assumption $f$ is non-zero and therefore the composition $j_P \circ f\circ \tilde p_M\in \End_A (A^{\oplus m})$ is non-zero too, because $\tilde p_M$ is surjective and $j_P$ is injective. 
Since $t^{\oplus m}$ is non-degenerate, there should be non-zero $\tilde g\in \End_A (A^{\oplus m})$ such that
\be\label{eq:rintgn-ne-0}
t^{\oplus m}\bigl((j_P \circ f\circ \tilde p_M) \circ \tilde g\bigr) \ne 0\ .
\ee
We set $g= \tilde p_M \circ \tilde g \circ j_P\colon P\to M$ and check using~\eqref{eq:tP-mug-def} the non-degeneracy of $t_P$:
\begin{align}
t_P(f\circ g) 
&=  t^{\oplus m}\bigl(j_P \circ f\circ (\tilde p_M \circ \tilde g \circ j_P) \circ p_P\bigr)\nonumber\\
&=  t^{\oplus m} \bigl( j_P \circ p_P \circ j_P \circ f\circ \tilde p_M \circ \tilde g \bigr)= 
 t^{\oplus m} \bigl( j_P  \circ f\circ \tilde p_M \circ \tilde g \bigr) \ne 0
\end{align}
where in the second equality we used cyclicity of $t^{\oplus m}$ and in the third the identity $ p_P \circ j_P = \id_P$, and finally we used~\eqref{eq:rintgn-ne-0}.
This also shows that the map $g$ is non-zero. This calculation finishes the proof of non-degeneracy of the family  $t_P$.
\end{proof}

\section{Modified trace and Calabi-Yau structure}
\label{sec:pivot-str}

In this section for a  finite pivotal  category $\cat$
 we prove Reduction Lemma
and
 show  that a Calabi-Yau  structure on $\proj(\cat)$ provides a non-degenerate modified trace if and only if a compatibility between the Calabi-Yau structure and  duality holds. Recall that $\proj(\cat)$ denotes  the tensor ideal of projective modules in $\cat$.

\subsection*{Pivotal structure} 
A  category $\cat$ is  pivotal if $\cat$ is a monoidal category with left duality  equipped with a
monoidal natural isomorphism $\delta\colon  \id_{\cat} \to (\arg)^{**}$ between the identity functor and the double duality functor. 
 We note that  the corresponding isomorphisms automatically satisfy $\delta_{V^*}=(\delta^*_V)^{-1}$ for $V\in \cat$, see~\cite[Prop.\,A.1]{Schauenburg}.
 
The pivotal structure allows to define right duality. Right dual objects are identified with the left ones, and the right (co)evaluation maps are defined as
\be\label{eq:right-duality}
\begin{split}
\widetilde\ev_V &:= \ev_{V^*} \circ (\delta_V\tensor \id_{V^*} ) \colon \qquad V\otimes V^* \to \one\ , \\
\widetilde\coev_V &:= (\id_{V^*}\tensor \delta^{-1}_{V} )\circ \coev_{V^*}  \colon \quad \one \to V^*\otimes V\ .
\end{split}
\ee
For the  left and right (co)evaluation maps we will use the following diagrammatical notations:
\begin{align}\label{eq:ev-coev-gr}
\ev_V &~=\hspace*{.7em}   \ipic{eval}{.25}  
\put(-34,-22){\scriptsize $V^\ast$} \put(-4,-22){\scriptsize $V$} \qc&
\coev_{V} &~=\hspace*{.7em} \ipic{coeval}{.25}  
\put(-34,16){\scriptsize $V$} \put(-5,16){\scriptsize $V^\ast$}  \qc&\\ 
\widetilde\ev_V &~=\hspace*{.7em} \ipic{eval-tw}{.25} 
\put(-34,-22){\scriptsize $V$} \put(-6,-22){\scriptsize $V^{*}$} 
\qc
& 
\widetilde\coev_V &~=\hspace*{.7em} \ipic{coeval-tw}{.25} 
\put(-38,16){\scriptsize $V^{*}$}
 \put(-5,16){\scriptsize $V$} 
\qp\notag
\end{align}
We recall the definition of the right and left partial traces in~\eqref{E:PartialLRtrace}. 
They have the following property.
\begin{lemma}\label{lem:reverse-arrows}
Let $\cat$ be a pivotal category and $Q,P\in\cat$, we have then the equality
\be
 \ipicc{part-tr-right}{.16} 
 \put(-35,-25){{\tiny $Q$}}  \put(-27,1){{\footnotesize $f$}}\put(-27,17){{\tiny $P^*$}} \put(1,5){{\tiny $P^{**}$}}
\quad\; =\; \; 
 \ipicc{part-tr-right-rev}{.16} 
 \put(-35,-25){{\tiny $Q$}}  \put(-27,1){{\footnotesize $f$}}\put(-27,17){{\tiny $P^*$}} \put(1,5){{\tiny $P$}}
\ee
for any $f\in\End_\cat(Q\tensor P^*)$, and similarly for the left partial trace of $f$.
\end{lemma}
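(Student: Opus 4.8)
The plan is to read both sides as endomorphisms of $Q$ and to move between them by unfolding the right duality maps via the pivot $\delta$. By \eqref{E:PartialLRtrace} the left-hand side is the right partial trace $\tr^r_{P^*}(f)$, assembled from $\coev_{P^*}\colon\one\to P^*\tensor P^{**}$ and $\tev_{P^*}\colon P^*\tensor P^{**}\to\one$, so that the closing (right) strand is coloured by $P^{**}$. The right-hand side is the same kind of expression in which the $P^*$-leg of $f$ is instead closed off by $\tcoev_P\colon\one\to P^*\tensor P$ and $\ev_P\colon P^*\tensor P\to\one$, so that the closing strand is coloured by $P$. Thus I want to prove
\[
(\Id_Q\tensor\tev_{P^*})\circ(f\tensor\Id_{P^{**}})\circ(\Id_Q\tensor\coev_{P^*})
=(\Id_Q\tensor\ev_P)\circ(f\tensor\Id_{P})\circ(\Id_Q\tensor\tcoev_P).
\]

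First I would rewrite the two closures appearing on the right. The definition \eqref{eq:right-duality} taken at $V=P$ reads $\tcoev_P=(\id_{P^*}\tensor\delta_P^{-1})\circ\coev_{P^*}$, which I invert to $\coev_{P^*}=(\id_{P^*}\tensor\delta_P)\circ\tcoev_P$. Dually I would establish the relation $\tev_{P^*}=\ev_P\circ(\id_{P^*}\tensor\delta_P^{-1})$. Substituting both into the left-hand side places a $\delta_P$ and a $\delta_P^{-1}$ on the closing strand, one on each side of $f\tensor\Id_{P^{**}}$; since these act on a tensor factor disjoint from the one carrying $f$, they slide past $f$ and cancel, yielding exactly the right-hand side. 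This sliding-and-cancelling is the routine part of the argument and is cleanest to render graphically.

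The step that needs genuine care is the top relation $\tev_{P^*}=\ev_P\circ(\id_{P^*}\tensor\delta_P^{-1})$, and I expect it to be the technical heart of the proof. I would start from $\tev_{P^*}=\ev_{P^{**}}\circ(\delta_{P^*}\tensor\id_{P^{**}})$, which is \eqref{eq:right-duality} at $V=P^*$, then invoke the pivotal compatibility $\delta_{P^*}=(\delta_P^*)^{-1}$, and finally apply the naturality identity $\ev_W\circ(\id_{W^*}\tensor\phi)=\ev_V\circ(\phi^*\tensor\id_V)$ for $\phi\colon V\to W$, specialised to $\phi=\delta_P^{-1}\colon P^{**}\to P$ together with $(\delta_P^{-1})^*=(\delta_P^*)^{-1}$. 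This is the only place where the compatibility of $\delta$ with duals, rather than a bare definition, enters.

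Finally, the statement for the left partial trace is obtained by the mirror-image computation, with the two coevaluation/evaluation pairs placed on the opposite side: one closes the $P^*$-leg on the left by $\coev_P$ and $\tev_P$ in place of $\tcoev_{P^*}$ and $\ev_{P^*}$. Here the roles are swapped, so that now $\ev_{P^*}=\tev_P\circ(\delta_P^{-1}\tensor\id_{P^*})$ holds directly from \eqref{eq:right-duality} while the coevaluation relation $\tcoev_{P^*}=(\delta_P\tensor\id_{P^*})\circ\coev_P$ is the one requiring the compatibility axiom; one again slides and cancels a single pair $\delta_P^{\pm1}$ across $f$. Since the manipulations are symmetric to the right-hand case, I would only indicate the needed changes rather than repeat the computation.
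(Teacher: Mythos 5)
Your proposal is correct and takes essentially the same route as the paper, whose one-line proof factorises $\id_{P^{**}}=\delta_P\circ\delta_P^{-1}$ on the closing strand and uses~\eqref{eq:right-duality} to reverse the arrow --- precisely your substitutions $\coev_{P^*}=(\id_{P^*}\tensor\delta_P)\circ\tcoev_P$ and $\tev_{P^*}=\ev_P\circ(\id_{P^*}\tensor\delta_P^{-1})$ followed by sliding the pivot past $f$ and cancelling. Your explicit check of the second relation via $\delta_{P^*}=(\delta_P^*)^{-1}$ and naturality of $\ev$ merely spells out what the paper leaves implicit in the phrase ``reverse arrows''.
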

\begin{proof}
We factorise  $\id_{P^{**}}= \delta_{P}\circ \delta^{-1}_P$ using pivotal isomorphisms  and use~\eqref{eq:right-duality}
 to reverse  arrows.
\end{proof}

We call an abelian category $\cat$
\textit{finite pivotal} if $\cat$ is a finite tensor category in the sense of~\cite{EGNO-book}, i.e.\ (1) if $\cat$ is finite as an abelian category,  (2) if it is a rigid monoidal category with $\ok$-bilinear and bi-exact tensor product functor, and (3) if its tensor unit is simple; and  if $\cat$ has  a pivotal structure.

\subsection*{Reduction Lemma} Let us  prove  Reduction Lemma mentioned in Introduction, which says
that to verify the right or left partial trace
property, it is enough to check it on a projective generator. Below is the exact statement, recall also Proposition~\ref{cor:ext}.
\begin{lemma}\label{lem:tHH-tH}
Given a  
finite pivotal category $\cat$ and a projective generator $G\in\cat$,
a symmetric linear form $t\in A^*$, where $A:= \End_\cat(G)$, extends uniquely to a right modified trace on $
\proj(\cat)$   if and only if
\be\label{eq:tGG-tG}
 t_{G\otimes G}\left(f \right)=
t_G
 \bigl( \tr^r_G(f)\bigr) ,
  \qquad  \text{for all}\quad f\in \End_\cat(G\otimes G).
\ee
 Analogously, $t$ extends  to a left modified trace on $\proj(\cat)$   if and only if
\be\label{eq:tGG-tG-left}
 t_{G\otimes G}\left(f \right)=
t_G
 \bigl( \tr^l_G(f)\bigr) ,   \qquad  \text{for all}\quad f\in \End_\cat(G\otimes G).
\ee
\end{lemma}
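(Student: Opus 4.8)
The ``only if'' direction is immediate: a right modified trace satisfies the right partial trace property \eqref{rpartial} for \emph{all} $P\in\proj(\cat)$ and $V\in\cat$, and specializing to $P=V=G$ (note $G\in\proj(\cat)$) gives exactly \eqref{eq:tGG-tG}. So the whole content is the converse. Let me abbreviate by $(\star_{P,V})$ the assertion that $t_{P\otimes V}(f)=t_P(\tr^r_V(f))$ for all $f\in\End_\cat(P\otimes V)$; the hypothesis is $(\star_{G,G})$ and the goal is $(\star_{P,V})$ for every $P\in\proj(\cat)$ and every $V\in\cat$. The plan is to transport both tensor factors to the generator in two kinds of moves: ``additivity'' moves (routine) and ``retract'' moves, of which the one in the \emph{traced} slot is the only delicate point.

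First I would record two structural properties of the extended family $\{t_P\}$ of Proposition~\ref{cor:ext}. \textbf{(a) Naturality of $\tr^r$ in the untraced factor:} for $a\colon P\to P'$, $b\colon P'\to P$ and $f\in\End_\cat(P\otimes V)$ one has $\tr^r_V\bigl((a\otimes\id_V)\circ f\circ(b\otimes\id_V)\bigr)=a\circ\tr^r_V(f)\circ b$, since $a,b$ slide past $\tev_V,\coev_V$ (they act on disjoint tensor factors). \textbf{(b) Additivity:} by cyclicity the trace of a block endomorphism equals the sum of its diagonal blocks, and $\tr^r_{V\oplus V'}$ is the diagonal sum $\tr^r_V\oplus\tr^r_{V'}$ because $(V\oplus V')^*=V^*\oplus V'^*$ and the right (co)evaluations respect this splitting. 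Fact (b) yields $(\star_{G,V})\wedge(\star_{G,V'})\Leftrightarrow(\star_{G,V\oplus V'})$ and $(\star_{G,V})\Rightarrow(\star_{G^{\oplus m},V})$; in particular $(\star_{G,G})$ already gives $(\star_{G,G^{\oplus k}})$ for all $k$.

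The heart of the argument, and the \emph{main obstacle}, is to pass from $(\star_{G,G^{\oplus k}})$ to $(\star_{G,V})$ for an arbitrary $V\in\cat$. Since $G$ is a projective generator there is an epimorphism $q\colon G^{\oplus k}\twoheadrightarrow V$; as $\otimes$ is exact and $G\otimes V$ is projective (ideal property), the epimorphism $\pi:=\id_G\otimes q\colon G\otimes G^{\oplus k}\twoheadrightarrow G\otimes V$ splits, giving $s\colon G\otimes V\to G\otimes G^{\oplus k}$ with $\pi\circ s=\id_{G\otimes V}$. The difficulty is that $s$ is \emph{not} of the form $\id_G\otimes(-)$, so the partial trace has no obvious reason to transport along it. The key claim that resolves this is
\be
\tr^r_{G^{\oplus k}}(s\circ h\circ\pi)=\tr^r_V(h)\gc\qquad h\in\End_\cat(G\otimes V)\gc
\ee
which I would prove using \emph{only} $\pi\circ s=\id$ together with the dinaturality (sliding) identities $\tev_{G^{\oplus k}}\circ(\id_{G^{\oplus k}}\otimes q^*)=\tev_V\circ(q\otimes\id_{V^*})$ and $(q\otimes\id)\circ\coev_{G^{\oplus k}}=(\id_V\otimes q^*)\circ\coev_V$. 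Expanding $\tr^r_{G^{\oplus k}}(s\circ h\circ\pi)$ and pushing the bottom $\pi$ through $\coev_{G^{\oplus k}}$ turns the cup into $(\id_{G\otimes V}\otimes q^*)\circ(\id_G\otimes\coev_V)$; the top cap $(\id_G\otimes\tev_{G^{\oplus k}})\circ(s\otimes\id)$ precomposed with $\id\otimes q^*$ collapses, via the other sliding identity, to $(\id_G\otimes\tev_V)\circ(\pi\otimes\id_{V^*})$, whereupon the surviving $\pi\circ s=\id$ leaves precisely $\id_G\otimes\tev_V$ on top, which is exactly the definition of $\tr^r_V(h)$. Granting the claim, $(\star_{G,V})$ is one line: $G\otimes V$ is a retract of $G\otimes G^{\oplus k}$ via $s,\pi$, so cyclicity gives $t_{G\otimes V}(h)=t_{G\otimes G^{\oplus k}}(s\circ h\circ\pi)$, and applying $(\star_{G,G^{\oplus k}})$ and then the claim gives $t_{G\otimes G^{\oplus k}}(s\circ h\circ\pi)=t_G\bigl(\tr^r_{G^{\oplus k}}(s\circ h\circ\pi)\bigr)=t_G(\tr^r_V(h))$.

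It remains to move the untraced factor from $G$ to a general projective $P$, which is now easy. By fact (b), $(\star_{G,V})$ for all $V$ upgrades to $(\star_{G^{\oplus m},V})$. Given $P\in\proj(\cat)$, fix $p_P\circ j_P=\id_P$ with $j_P\colon P\hookrightarrow G^{\oplus m}$, $p_P\colon G^{\oplus m}\twoheadrightarrow P$, and for $f\in\End_\cat(P\otimes V)$ set $\tilde f=(j_P\otimes\id_V)\circ f\circ(p_P\otimes\id_V)$. Then, using cyclicity (so that $t_{P\otimes V}(f)=t_{G^{\oplus m}\otimes V}(\tilde f)$), the relation $(\star_{G^{\oplus m},V})$, fact (a), and cyclicity once more,
\be
t_{P\otimes V}(f)=t_{G^{\oplus m}}\bigl(\tr^r_V(\tilde f)\bigr)=t_{G^{\oplus m}}\bigl(j_P\circ\tr^r_V(f)\circ p_P\bigr)=t_P\bigl(\tr^r_V(f)\bigr)\gc
\ee
which is $(\star_{P,V})$. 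This proves the right statement; the left statement \eqref{eq:tGG-tG-left} follows by the mirror-image argument, replacing $\tr^r,\tev_V,\coev_V$ by $\tr^l,\ev_V,\widetilde\coev_V$ and tensoring on the left throughout.
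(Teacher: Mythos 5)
Your proof is correct, but it takes a genuinely different route from the paper's at the crucial step. The parts where you agree are the ``only if'' direction and the transport of the \emph{untraced} factor: your facts (a)--(b) together with the retract $p_P\circ j_P=\id_P$ accomplish what the paper's first step does by decomposing $\id_P$ and $\id_{P'}$ through $G$ as in Lemma~\ref{lem:decomp-id} and using cyclicity to reduce everything to $t_{G\otimes G}$. The divergence is in passing from a projective traced factor to arbitrary $V\in\cat$. The paper sets $\tP=P\otimes V$ and introduces the bent-strand morphisms $\A\colon P\otimes P^*\to\tP\otimes\tP^*$ and $\B\colon\tP\otimes\tP^*\to P\otimes P^*$ of Figure~\ref{fig:F}; comparing $t_{P\otimes P^*}(\B\circ\A)$ with $t_{\tP\otimes\tP^*}(\A\circ\B)$ via cyclicity, the already-established projective--projective case, Lemma~\ref{lem:reverse-arrows} and zig-zag identities then yields \eqref{rpartial} without ever choosing a presentation of $V$. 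You instead pick an epimorphism $q\colon G^{\oplus k}\twoheadrightarrow V$, split $\pi=\id_G\otimes q$ using projectivity of $G\otimes V$ (the tensor-ideal property plus bi-exactness of $\otimes$), and prove via dinaturality of $\tev$ and $\coev$ that the partial trace is preserved along this retract, $\tr^r_{G^{\oplus k}}(s\circ h\circ\pi)=\tr^r_V(h)$; your computation is correct --- the point, which you identify accurately, is that $q$ slides around the loop and recombines with $s$ into $\pi\circ s=\id_{G\otimes V}$, so it suffices that $\pi$ (not $s$) has the form $\id_G\otimes(-)$. Both arguments rest on the same inputs (cyclicity of the family from Proposition~\ref{cor:ext}, exactness of $\otimes$, the ideal property), but they buy different things: your split-epimorphism argument avoids the double-dual manipulations entirely, and in particular does not need $P^*$ and $\tP^*$ to lie in $\proj(\cat)$ --- a fact about finite tensor categories that the paper's appeal to its projective--projective case for the pairs $(P,P^*)$ and $(\tP,\tP^*)$ implicitly uses --- at the price of choosing a presentation of $V$ and verifying two dinaturality identities; the paper's $\A/\B$ trick is choice-free and stays entirely within the graphical calculus of the pivotal structure.
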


\begin{proof}
Only one direction is not obvious.
By Proposition~\ref{cor:ext},
the symmetric  form $t\in A^*$ extends uniquely to a family of linear maps  $t_P\colon \End_\cat(P)\rightarrow \ok $, for ${P\in \proj(\cat)}$, which satisfies the cyclicity property. We need to check  the right partial trace property.

We first prove~\eqref{rpartial} for a pair of projective objects.
Assume  $P, P'\in \proj(\cat)$ and  $f\in \End_\cat(P\otimes P')$.
We have finite sum decompositions of the identities as in~\eqref{E:identities}:\,\footnote{Here, we use the projective generator $G$ instead of the regular module $A$ as we work in $\cat$, recall that the equivalence functor $\Hom_\cat(-,G)$ between $\cat$ and $\Amod$ sends $G$ to $A$.}
\be\label{eq:idP-idP}
\id_P=\sum_{i\in I}a_i\circ \id_G\circ b_i\ ,\qquad
  \id_{P'}=\sum_{{i'\in I'}}a_{i'}\circ \id_G\circ b_{i'}\ .
 \ee
We can now calculate $t_{P\tensor P'} (f)$ in terms of $t_{G\tensor G}$  by inserting these identities and using the cyclicity. Indeed,
\be\label{eq:t-ab}
t_{P P'} (f)\; =\; 
t_{P P'}\left(
\ipic{rect-map-ab}{.17} 
\put(-16,9){\scriptsize $f$}
\put(-24,-6){\scriptsize $a_i$} \put(-11,-6){\scriptsize $a_{i'}$}
\put(-24,-19){\scriptsize $b_i$} \put(-11,-19){\scriptsize $b_{i'}$}
 \right)
  \; \stackrel{\text{cycl.}}{=} \; 
 t_{GG}\left(
\ipic{rect-map-ab-cycl}{.17} 
\put(-24,12){\scriptsize $b_i$} \put(-11,12){\scriptsize $b_{i'}$}
\put(-16,-4){\scriptsize $f$}
\put(-24,-19){\scriptsize $a_i$} \put(-11,-19){\scriptsize $a_{i'}$}
 \right)
    \; \stackrel{\eqref{eq:tGG-tG}}{=} \; 
   t_{G}\left(
\ipic{rect-map-ab-cycl-trace}{.17} 
\put(-36,13){\scriptsize $b_i$} \put(-23,13){\scriptsize $b_{i'}$}
\put(-28,-3){\scriptsize $f$}
\put(-36,-18){\scriptsize $a_i$} \put(-23,-18){\scriptsize $a_{i'}$}
 \right)
   \; \stackrel{\text{*}}{=} \; 
   t_P\bigl(\tr^r_{P'}(f)\bigr)\ 
  \ee
where we omit the tensor product symbol in the index of $t$ for brevity, and the summation is assumed over the repeated indices, i.e.\ over $i\in I$ and $i'\in I'$.
In the step $(*)$ we used first the standard manipulations with dual maps to move $b_{i'}$ around the loop and then applied~\eqref{eq:idP-idP}, and finally applied the cyclicity property of $t_G$ using again~\eqref{eq:idP-idP}.

We have thus established the right partial trace property of $t$ in the case where both objects are projective.
Now assume  $P\in \proj(\cat)$ and $V\in \cat$. Then we set $\tP:=P\otimes V$
 which is in $\proj(\cat)$ due to exactness of the tensor product.
  For $f\in \End_\cat(P\otimes V)$, let $\A\in \Hom_\cat(P\otimes P^*,\tP\otimes \tP^*)$ and $\B\in \Hom_\cat(\tP\otimes \tP^*,P\otimes P^*)$ be defined as in Figure \ref{fig:F}. 
\begin{figure}
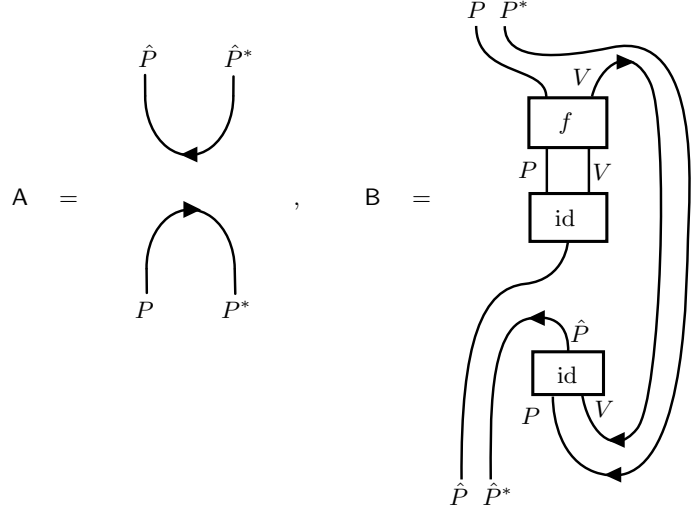

{
\ipic{mapAB-full}{1.0} 
}
\caption{Morphisms $\A$ and $\B$.}
\label{fig:F}
\end{figure}
Using the  right partial trace property for projective objects established in~\eqref{eq:t-ab},
we get
\be\label{eq:tABBA}
\begin{split}
&t_{P\otimes P^*}(\B\circ \A)=t_P\bigl(\tr^r_{P^*}(\B\circ \A)\bigr)\stackrel{*}{=}t_P\bigl(\tr^r_V(f)\bigr)\ ,\\
& t_{\tP\otimes {\tP}^*}(\A \circ \B)=t_{\tP}\bigl(\tr^r_{{\tP}^*}(\A\circ\B)\bigr)\stackrel{*}{=} t_{P\otimes V}(f)\ ,
\end{split}
\ee
where in steps $(*)$ we used first Lemma~\ref{lem:reverse-arrows} and then simple manipulations with the diagrams, like the zig-zag indentity for the left duality.
Using  the cyclicity equation 
$t_{P\otimes P^*}(\B\circ \A) = t_{\tP\otimes {\tP}^*}(\A \circ \B)$ and comparing both the lines in~\eqref{eq:tABBA} we finally get the equality $t_{P\otimes V}(f) = t_P(\tr^r_V(f))$.
The proof for the left modified trace goes along similar lines 
after reflecting all diagrams on a vertical line. 
\end{proof}

\subsection*{Duality and Calabi-Yau structure}
We now recall that in any  pivotal 
 category~$\catCY$ we have the isomorphisms, for $U,V,W\in \catCY$,
\be
\begin{split} 
 \begin{array}{rcl}
{}^{\cap}d\colon\; &\Hom_\catCY(W,U\tensor V) \xrightarrow{\sim}
 \Hom_\catCY( U^*\tensor W, V)\\
&f\mapsto (\ev_U\otimes \id_V)\circ (\id_{U^*}\otimes f)\ 
\end{array} \
,\qquad
 \ipic{rect-map-1-2-verso}{.15} 
\put(-17,-3){\footnotesize $f$}
 \put(-25,24){\scriptsize $U$} \put(-11,24){\scriptsize $V$} 
\put(-18,-29){\scriptsize $W$} 
  \; \mapsto \; 
\ipic{rect-map-1-2-cap-left}{.15} 
\put(-17,-3){\footnotesize $f$}
 \put(-11,24){\scriptsize $V$} 
\put(-39,-29){\scriptsize $U^*$}  \put(-18,-29){\scriptsize $W$}
\put(-28,-35){\ }
  \\
\begin{array}{rcl}
{}_{\cup}d\colon\;\ &\Hom_\catCY(U\tensor V, W) \xrightarrow{\sim} \Hom_\catCY(V,U^*\tensor W)\\
&f\mapsto (\id_{U^*} \otimes f)\circ (\tcoev_U\otimes \id_V)\ \end{array}\
, \qquad
 \ipic{rect-map-1-2}{.15} 
\put(-17,-2){\footnotesize $f$}
\put(-18,24){\scriptsize $W$} 
 \put(-25,-29){\scriptsize $U$} \put(-11,-29){\scriptsize $V$} 
  \; \mapsto \; 
\ipic{rect-map-1-2-cup-left}{.15} 
\put(-17,-2){\footnotesize $f$}
\put(-39,24){\scriptsize $U^*$}  \put(-18,24){\scriptsize $W$} 
 \put(-11,-29){\scriptsize $V$}  
 \end{split}
 \label{E:duality_iso_l}
\ee
that are defined analogously to~\eqref{E:duality_iso_r}, with the duality maps on the left side.

 Calabi-Yau (CY) structure  on $\catCY$ compatible with  duality on the  right was introduced before diagram~\eqref{rcomp}. 
Similarly, we say that a CY structure on $\catCY$  is {\it compatible with  duality on the  left}   if the following
diagram commutes  for  all $U,V,W\in \catCY$:
\be \label{lcomp}
\xymatrix@R=20pt@C=40pt@W=10pt@M=10pt{
\Hom_{\catCY}(U\otimes V,W) \times \Hom_{\catCY}(W,U\otimes V) \ar[r]^{\qquad\qquad\quad\circ}\ar@<-45pt>[dd]^{{}_{\cup}d}\ar@<25pt>[dd]^{{}^{\cap}d}&
\End_\catCY(U\otimes V) \ar@<-0pt>[d]_{ t_{U\tensor V}}\\
& \ok\\
 \Hom_\catCY(V, U^*\tensor W) \times  \Hom_\catCY(U^*\tensor W, V) \ar[r]^{\qquad\qquad\qquad\circ} & \End_\catCY(V) \ar[u]^{ t_V}
}
\ee 

\begin{theorem}\label{thm:CYcomp}
Let $\cat$ be a 
$\ok$-linear finite pivotal category. A  Calabi-Yau structure on 
$\proj(\cat)$ is compatible with duality on the right (left) 
if and only if the corresponding trace maps are non-degenerate and have  the 
right (left) partial trace property.
\end{theorem}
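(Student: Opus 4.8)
The plan is to reduce the whole equivalence, in both directions, to the single pivotal identity
\[
d^\cap(\psi)\circ d_\cup(\phi)=\tr^r_V(\psi\circ\phi),\qquad
\phi\in\Hom_\cat(U\otimes V,W),\quad \psi\in\Hom_\cat(W,U\otimes V),
\]
which follows at once from the definitions in~\eqref{E:duality_iso_r}, the interchange law $(\psi\otimes\id_{V^*})\circ(\phi\otimes\id_{V^*})=(\psi\circ\phi)\otimes\id_{V^*}$, and the formula~\eqref{E:PartialLRtrace} for $\tr^r_V$. Commutativity of~\eqref{rcomp} is by definition the equality $t_{U\otimes V}(\psi\circ\phi)=t_U\bigl(d^\cap(\psi)\circ d_\cup(\phi)\bigr)$ for all such $\phi,\psi$, so through this identity it is the same as $t_{U\otimes V}(\psi\circ\phi)=t_U\bigl(\tr^r_V(\psi\circ\phi)\bigr)$.

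Before comparing compatibility with the partial trace property, I would observe that the two non-degeneracy hypotheses agree. Fixing a projective generator $G$ and writing $A:=\End_\cat(G)$, Propositions~\ref{Meq} and~\ref{cor:ext} show that a family of cyclic trace maps on $\proj(\cat)$ is uniquely determined by the symmetric form $t:=t_G$ on $A$, and that each $t_P$ is its canonical extension. The Calabi-Yau non-degeneracy~\eqref{eq:CY} is then statement~(2) of Theorem~\ref{Thm:nondegeneracy}, while the modified-trace non-degeneracy~\eqref{eq:nondegeneracy} is its statement~(3); both are equivalent to non-degeneracy of $t$. Hence ``the trace maps form a Calabi-Yau structure'' and ``the trace maps are non-degenerate'' are the same hypothesis, and only the equivalence of compatibility with duality on the right and the right partial trace property~\eqref{rpartial} remains to be shown.

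The implication from the partial trace property to compatibility is immediate: applying~\eqref{rpartial} with $P=U\in\proj(\cat)$ and $f=\psi\circ\phi\in\End_\cat(U\otimes V)$ gives $t_{U\otimes V}(\psi\circ\phi)=t_U\bigl(\tr^r_V(\psi\circ\phi)\bigr)$, which by the identity above is exactly commutativity of~\eqref{rcomp}. Here $V$ may be taken projective since~\eqref{rpartial} holds for arbitrary $V\in\cat$.

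The converse is the substantive direction, and its only genuine obstacle is that~\eqref{rcomp} is assumed merely for $U,V,W\in\proj(\cat)$, whereas~\eqref{rpartial} must be established for an arbitrary $V\in\cat$. I would close this gap with the Reduction Lemma (Lemma~\ref{lem:tHH-tH}): specialising~\eqref{rcomp} to $U=V=G$, $W=G\otimes G$ and taking $\phi=f\in\End_\cat(G\otimes G)$, $\psi=\id_{G\otimes G}$, the identity above converts commutativity into $t_{G\otimes G}(f)=t_G\bigl(\tr^r_G(f)\bigr)$, which is precisely the hypothesis~\eqref{eq:tGG-tG} of that lemma. Since $\{t_P\}$ is the unique extension of $t=t_G$, Lemma~\ref{lem:tHH-tH} then promotes this one relation on the generator to the full right partial trace property for all $P\in\proj(\cat)$ and all $V\in\cat$. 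The left-handed statement is obtained by the mirror argument, reflecting every diagram in a vertical axis and replacing $d^\cap,d_\cup,\tr^r_V$ and~\eqref{eq:tGG-tG} by ${}^\cap d,{}_\cup d,\tr^l_V$ and~\eqref{eq:tGG-tG-left}.
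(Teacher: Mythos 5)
Your proposal is correct and follows essentially the same route as the paper's proof: the same composition identity $d^{\cap}(\psi)\circ d_{\cup}(\phi)=\tr^r_V(\psi\circ\phi)$ underlies both directions, and the converse is reduced, exactly as in the paper, to the single relation $t_{G\otimes G}(f)=t_G\bigl(\tr^r_G(f)\bigr)$ obtained by specialising diagram~\eqref{rcomp} to $U=V=G$, $W=G\otimes G$, $\psi=\id_{G\otimes G}$, and then invoking Reduction Lemma~\ref{lem:tHH-tH}. Your explicit reconciliation of the two non-degeneracy notions via Theorem~\ref{Thm:nondegeneracy} (statements~(2) and~(3) both being equivalent to non-degeneracy of $t_G$) is a point the paper leaves implicit, and it is handled correctly.
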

\begin{proof} 
We prove the right case only, the left one is similar. 
The one direction is an easy check. Indeed, assume $\t$ is a non-degenerate right modified trace on $\proj(\cat)$, and $a\in \Hom_{\cat}(U\otimes V,W)$ and $b\in \Hom_{\mathcal C}(W,U\otimes V)$, for $U,V,W\in\proj(\cat)$, then the top-right side of  diagram~\eqref{rcomp} gives  $\t_{U\otimes V}(b\circ a)$ while the left-bottom part gives $\t_U \bigl( \tr^r_V(b\circ a)\bigr)$. Then using~\eqref{rpartial} we conclude that diagram~\eqref{rcomp} commutes for $\catCY=\proj(\cat)$.

It remains to show the necessary condition. Let
 $\{t_P\,|\, P\in \proj(\cat)\}$ be CY structure on $\proj(\cat)$
 compatible with duality on the right.
We need to establish the right partial trace property
\eqref{rpartial}.
 By Reduction Lemma~\ref{lem:tHH-tH}, 
it is enough to consider the case where $U=V=G$ for $G$ a projective generator. Let us  also fix $W=G\tensor G$ and choose $b=\id_{G\tensor G}$ and any $a\in\End_\cat(G\tensor G)$.
Then by the assumption and using the previous calculation, commutativity of the diagram~\eqref{rcomp} gives the equality  $t_{G\otimes G}(a) = t_G \bigl( \tr^r_G(a)\bigr)$ which by Reduction Lemma~\ref{lem:tHH-tH} implies that $t$ is a right modified trace.
\end{proof}

\section{Pivotal Hopf algebras}\label{sec:Hopf}
In this section, we first recall standard facts from theory
of finite-dimensional Hopf algebras which will be needed later and then prove Proposition \ref{prop:mod-tr-ss}.
The main reference is the book~\cite{Ra-book}.
In what follows,  $\UH$ will be a finite-dimensional Hopf algebra over a field $\ok$ with the unit $\one$, multiplication  $\mu$, counit~$\epsilon$, coproduct~$\Delta$, and  antipode $S$. In this case, the antipode is invertible \cite{Iovanov}. 
In addition, we show that  if $H$ is a unimodular pivotal Hopf algebra,
then  $\Hpmod$  admits
 a non-degenerate and unique up-to-scalar right modified trace, or equivalently a Calabi-Yau structure compatible with  duality on  the right, and a similar statement for the left property.

\subsection*{Pivot}
We will say that an element $g\in H$ is \textit{group-like} if 
$\Delta(g)=g\tensor g$. It follows \cite[Prop.\,III.3.7]{Kassel} that $g$ is invertible, $S(g)=g^{-1}$ and $\epsilon(g)=1$.

\begin{definition}
A group-like element $\balance\in \UH$  is called a \textit{pivot} if
\be
S^2(x) = \balance x \balance^{-1}, \qquad \text{for all}\quad x\in \UH.
\ee
The pair  $(\UH,\pivot)$ of a Hopf algebra $H$ and a pivot $\pivot$  is called  a \textit{pivotal} Hopf algebra. 
\end{definition}

A  pivot $\pivot$ in  a Hopf algebra, if it exists, is not necessarily unique. For  a group-like  element~$z$ in the center of $H$, the product  $z\balance$ is also a pivot.
We will therefore indicate the choice of a pivot explicitly by the notation
 $(\UH,\pivot)$.

\subsection*{Examples}
Let $G$ be a finite  group. Then its group algebra 
$\ok[G]$ is a finite-dimensional pivotal Hopf algebra with 
$\pivot=\one$. 
\\
Ribbon Hopf algebras defined e.g.\ in \cite{Turaev_book}
are pivotal Hopf algebras.
The canonical  choice of a pivot is given by $\balance = \sqs \ribbon^{-1}$, where  $\sqs = \mu\circ (S\tensor \id) (R_{21})$ is the canonical Drinfeld element, and $\ribbon$ is the ribbon element. 
\\
Many more examples can be constructed as follows.
Any Hopf algebra $H$
can be extended to a pivotal Hopf algebra as follows~\cite[Sec.\,2.1]{AAGTV}. 
Recall that $S$ is invertible and order of $S^2$ is  finite.
Let $G$ be  the cyclic group generated by $S^2$ and set $\pivot=S^2$.
 We can then consider the smash product  of $H$ with $\ok G$. The result is a pivotal Hopf algebra with the pivot $\pivot$.

\subsection*{Symmetrised left and right integrals}
For any pivotal Hopf algebra $(\UH,\pivot)$ with the right integral $\rint$,  the \textit{symmetrised} 
right
integral $\rintg$ is defined by 
$\rintg(x):=\rint(\pivot x)$, for  $x\in \UH$.
Applying~\eqref{eq:rint-def} for $\balance x$ we get the  relation  for $\rintg$: 
\be\label{eq:mug-def-rel}
 (\rintg\tensor\balance)\Delta(x)=\rintg(x)\one\ .
\ee
We note that  relation~\eqref{eq:mug-def-rel} defines $\rintg$ uniquely (up to a scalar) because of up-to-scalar uniqueness of $\rint$ and invertibility of the pivot $\balance$.

Analogously,  the {\em symmetrised left integral} is defined by
$\lintg(x):=\lint(\pivot^{-1} x)$ 
for any $x\in \UH$.
Applying~\eqref{eq:lint-def} for $\pivot^{-1} x$ we get 
the defining relation  for the symmetrised left integral: 
\be\label{eq:mug-def-rel-left}
 (\pivot^{-1}\tensor\lintg)\Delta(x)=\lintg(x)\one
 \quad {\text{for any}} \quad x\in \UH \ .
\ee

 We note that the spaces of left and right integrals 
are not necessarily equal.
 We have a simple lemma.
 \begin{lemma}\label{lem:left-s}
The left integral can be chosen as $\lint(x) = \rint(S(x))$.
 \end{lemma}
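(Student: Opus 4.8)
The plan is to verify that the linear form $\lint'\colon \UH \to \ok$ defined by $\lint'(x) := \rint(S(x))$ satisfies the defining relation \eqref{eq:lint-def} for a left integral, namely $(\id \tensor \lint')\Delta(x) = \lint'(x)\one$ for all $x \in \UH$. Since the space of left integrals is one-dimensional and I will exhibit $\lint'$ as a left integral, this shows it is a valid choice (up to the usual scalar ambiguity). The essential idea is that the antipode $S$ is an anti-homomorphism that intertwines the left and right integral conditions, essentially because $S$ reverses the order of tensor factors in the coproduct.

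First I would recall the compatibility of the antipode with the coproduct: $\Delta \circ S = (S \tensor S) \circ \tau \circ \Delta$, where $\tau$ denotes the flip $\tau(a \tensor b) = b \tensor a$. Writing this in Sweedler-type notation, if $\Delta(x) = x_{(1)} \tensor x_{(2)}$ then $\Delta(S(x)) = S(x_{(2)}) \tensor S(x_{(1)})$. Starting from the right integral relation \eqref{eq:rint-def} applied to $S(x)$, we have $(\rint \tensor \id)\Delta(S(x)) = \rint(S(x))\one$. Expanding the left-hand side using the formula for $\Delta(S(x))$ gives $\rint(S(x_{(2)}))\, S(x_{(1)}) = \rint(S(x))\one$.

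The next step is to extract from this the left integral relation for $\lint'$. Applying the inverse antipode $S^{-1}$ (which exists since $\UH$ is finite-dimensional, as noted in the excerpt) to both sides of the identity $\rint(S(x_{(2)}))\, S(x_{(1)}) = \rint(S(x))\one$, and using that $S^{-1}$ is an anti-homomorphism fixing $\one$ together with $\epsilon(g)=1$ for group-likes, I would rewrite the equation in terms of $x_{(1)}$ and $x_{(2)}$ directly. Concretely, substituting $x \mapsto S^{-1}(y)$ and simplifying $S(S^{-1}(y)_{(i)})$ via the antipode-coproduct compatibility should transform the relation into $y_{(1)}\,\rint(S(y_{(2)})) = \rint(S(y))\one$, which is precisely $(\id \tensor \lint')\Delta(y) = \lint'(y)\one$.

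The main obstacle I anticipate is bookkeeping the antipode-coproduct compatibility and the order-reversal carefully, since it is easy to misplace a factor when converting between the right condition on $\UH$ and the left condition, and one must track exactly where $S$ versus $S^{-1}$ is needed. Once the correct substitution is made, the verification is a direct computation using only that $S$ is a coalgebra anti-homomorphism and that $\rint$ is a right integral; no further structural input is required.
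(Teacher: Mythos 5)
Your proposal is correct and takes essentially the same route as the paper's proof: apply the right-integral axiom \eqref{eq:rint-def} to $S(x)$, use the anti-coalgebra property $\Delta(S(x))=S(x_{(2)})\tensor S(x_{(1)})$ to get $\rint(S(x_{(2)}))\,S(x_{(1)})=\rint(S(x))\one$, and then apply $S^{-1}$ termwise (using $S^{-1}(\one)=\one$) to obtain $x_{(1)}\,\rint(S(x_{(2)}))=\rint(S(x))\one$, which is exactly the left-integral relation for $\rint\circ S$. One small caution: the extra substitution $x\mapsto S^{-1}(y)$ you sketch is unnecessary --- performed on its own it merely cancels the antipodes and recovers the original right-integral axiom --- since applying $S^{-1}$ to the element-valued identity already finishes the argument.
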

 \begin{proof}
From \eqref{eq:rint-def} we have 
 $(\rint\tensor\id)\Delta(S(x))=\rint(S(x))\one\ $ for any $x\in H$.
 Using the identity $(S\otimes S)\Delta^{\op}(x)=\Delta(S(x))$ 
we get
$$ (\rint \circ S \otimes S)\Delta^{\op}(x)=
(S\otimes \rint\circ S)\Delta(x) =\rint( S(x))\one$$
Applying $S^{-1}$ to both sides of the last equality and using
$S^{-1}(\one)=\one$, we obtain that $\rint\circ S$ satisfies  the defining equation for a left integral.
 \end{proof}

\noindent{\bf Example.}
If $\UH$ is semisimple with $S^2=\id$,  
we can choose $\pivot=\one$ and then
$\rint=\rintg=\lint=\lintg$ is the character
of the regular representation \cite[Prop. 10.7.4]{Ra-book}.
\vspace{1mm}

\begin{proposition}[\cite{Radford}]\label{prop:int-non-deg}
Let $H$ be a finite-dimensional Hopf algebra. Then right and left integrals are non-degenerate linear forms. 
\end{proposition}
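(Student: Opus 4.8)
The plan is to reduce the statement to the non-degeneracy of a single nonzero right integral and then invoke the Frobenius structure of $\UH$. Since the space of right integrals is one-dimensional (recorded above), it suffices to fix one nonzero $\rint$ and show that the bilinear form $\beta(x,y)=\rint(xy)$ is non-degenerate; equivalently, that the linear map $\Phi\colon \UH\to\UH^*$, $y\mapsto\rint(-\,y)$, is injective, which by $\dim_\ok\UH=\dim_\ok\UH^*$ is the same as bijectivity. The left case then follows formally: by Lemma~\ref{lem:left-s} the form $\nu:=\rint\circ S$ is a left integral, and since $S$ is invertible the Gram matrix of $(x,y)\mapsto\nu(xy)=\rint\bigl(S(y)S(x)\bigr)$ is obtained from that of $\beta$ by invertible changes of basis, hence is again non-degenerate. (In particular, non-degeneracy of $\beta$ as a bilinear form gives both one-sided kernels trivial, so the left and right versions of the definition coincide.)

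The main tool is the Fundamental Theorem of Hopf modules, which is the structural heart of the Larson--Sweedler theorem. I would endow $\UH^*$ with its canonical structure of $\UH$-Hopf module; a direct computation identifies the space of coinvariants $(\UH^*)^{\mathrm{co}\,\UH}$ with the line of right integrals. The Fundamental Theorem then yields an isomorphism of Hopf modules $\UH^*\cong\UH\tensor(\UH^*)^{\mathrm{co}\,\UH}$, and since the coinvariants are one-dimensional and spanned by $\rint$ this becomes an isomorphism of left $\UH$-modules $\UH\xrightarrow{\sim}\UH^*$, $h\mapsto h\cdot\rint$. Unwinding the module conventions, this map is precisely $\Phi$, so its bijectivity is exactly the required non-degeneracy, with one-dimensionality of the integral recovered as a by-product.

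Alternatively --- and this is where the real work sits --- one can produce explicit dual bases from a nonzero cointegral $\coint$. Normalising $\rint(\coint)=1$, the goal is a reproducing identity of the shape
\[
\sum \rint\bigl(S(\coint_{(1)})\,x\bigr)\,\coint_{(2)} = x \qquad \text{for all } x\in\UH,
\]
in Sweedler notation $\Delta(\coint)=\sum\coint_{(1)}\tensor\coint_{(2)}$, which displays $\{S(\coint_{(1)})\}$ and $\{\coint_{(2)}\}$ as bases dual with respect to $\beta$ and forces $\beta$ to be non-degenerate. The main obstacle is establishing this identity. The available ingredients are the defining relation of the cointegral, from which $\sum x_{(1)}\coint_{(1)}\tensor x_{(2)}\coint_{(2)}=\epsilon(x)\,\Delta(\coint)$ follows by applying $\Delta$, together with the right-integral relation $(\rint\tensor\id)\Delta(x)=\rint(x)\one$ and the antipode axioms. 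I expect the delicate point to be arranging the antipode and the Sweedler indices so that these combine into the displayed identity; the remaining manipulations are routine bookkeeping.
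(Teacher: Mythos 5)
Your main argument coincides with the paper's proof: the paper simply cites \cite[Thm.\,10.2.2(e)]{Ra-book} --- the statement that $\UH^*$ is a free $\UH$-module with basis $\{\rint\}$, which is exactly the Larson--Sweedler/Fundamental-Theorem isomorphism $\UH\xrightarrow{\sim}\UH^*$ you derive (the antipode twist in the module action is harmless, as you note, since one-sided non-degeneracy in finite dimensions gives both kernels trivial) --- and it likewise settles the left case via $\lint=\rint\circ S$ and bijectivity of $S$. Your second, dual-bases sketch is admittedly incomplete at its key reproducing identity, but it is dispensable since your first route already completes the proof.
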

\begin{proof}
Let us first prove the non-degeneracy of $\rint$.
For any $h \in H$ we set
 $\rint_h(-):=\rint(h\cdot-)$.
By \cite[Theorem 10.2.2(e)]{Ra-book}, 
$H^*$ is a free  $H$-module with
basis $\{\rint\}$,
where the action by $a\in H$ sends $\rint$ 
to $\rint_{S(a)}$.
 This means that
 for any  non-zero $b\in H$, there exist $b'$ such that $\rint(bb')\neq 0$,
 since $S$ is bijective. This proves that left kernel of $\rint$ is trivial. Since $H$ is finite-dimensional,
 $\rint$ is non-degenerate. Non-degeneracy of $\lint$
 follows from Lemma \ref{lem:left-s} and non-degeneracy
 of~$\rint$.
\end{proof}

\subsection*{Unimodular Hopf algebras}
 A \textit{right cointegral} in $\UH$ is an element $\coint\in H$ such that
\be
x\coint=\epsilon(x) \coint\gc \qquad \text{  for all } \; x \in \UH\gp
\ee
Similarly, a \textit{left cointegral} is defined by  the equation $\coint x=\epsilon(x) \coint$. 
Non-zero right and left cointegrals exist in any finite-dimensional Hopf algebra and are unique up to scalar multiple~\cite{LarsonSweedler}.
A Hopf algebra is called {\em unimodular} if 
its right cointegral is also left. In this case, we call the cointegral \textit{two-sided}.

It is shown in \cite[Theorem 2]{Humphreys} that existence of a non-degenerate symmetric linear form on $H$
implies unimodularity. The argument is as follows. Let  $\coint$ and $\coint'$ be respectively right and left cointegrals. 
With respect to a non-degenerate symmetric linear form,
 both $\coint$ and $\coint'$ belong to the orthogonal complement of $\Ker(\epsilon\colon H\to \ok)$, which is 1-dimensional. Let us show the converse.

\begin{proposition}\label{lemma:mug-sym}
For a unimodular  pivotal Hopf algebra $(\UH,\pivot)$, the symmetrised right and left integrals 
 define  non-degenerate symmetric linear forms on $\UH$.
\end{proposition}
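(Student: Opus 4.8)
The plan is to prove two things about the symmetrised right integral $\rintg$ (the left case being analogous): first that it is \emph{symmetric}, i.e. $\rintg(xy)=\rintg(yx)$ for all $x,y\in\UH$, and second that it is \emph{non-degenerate}. Non-degeneracy is the easy half: since $\rint$ is non-degenerate by Proposition~\ref{prop:int-non-deg} and the pivot $\pivot$ is an invertible element of $\UH$, the form $\rintg(x)=\rint(\pivot x)$ differs from $\rint$ only by left multiplication with a unit, so its radical is trivial as well. Concretely, if $\rintg(xy)=\rint(\pivot x y)=0$ for all $x$, then replacing $x$ by $\pivot^{-1}x$ gives $\rint(xy)=0$ for all $x$, whence $y=0$ by non-degeneracy of $\rint$.

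The symmetry property is where the real work lies, and it is exactly here that unimodularity must enter. First I would recall the defining relation~\eqref{eq:mug-def-rel} for $\rintg$, namely $(\rintg\tensor\pivot)\Delta(x)=\rintg(x)\one$, written in Sweedler notation as $\sum \rintg(x_{(1)})\,\pivot\,x_{(2)}\cdots$. The standard strategy is to express $\rintg(xy)$ using the antipode and the cointegral, exploiting the interaction between integrals and cointegrals. The key classical input is \emph{Radford's formula} for $S^4$ together with the relation between the right integral, the distinguished group-like element (the \emph{modular element}) $\alpha$ of $\UH$, and the antipode. In the unimodular case the right cointegral is two-sided, which forces the modular element to be trivial, $\alpha=\epsilon$; this trivialisation is precisely what makes the symmetrised form symmetric rather than merely twisted-symmetric.

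The concrete computation I would carry out is the following. Using the fundamental identity relating the right integral and antipode, one knows that for the right integral $\rint$ one has $\rint(xy)=\rint\bigl(S^2(y)\,x\bigr)$ up to the modular element — more precisely $\rint(xy)=\rint(y_{(?)}\,x)$ twisted by $\alpha$ and $S^2$. Substituting $S^2(y)=\pivot y\pivot^{-1}$ and inserting the definition $\rintg(x)=\rint(\pivot x)$, the pivot conjugation is designed to cancel against the $S^2$ twist, leaving $\rintg(xy)=\rintg(yx)$ once the modular element has been set to $\epsilon$ by unimodularity. The hard part will be assembling the correct form of the integral–antipode trace identity in a self-contained way; I expect to invoke the standard result (from \cite{Ra-book}) that for a right integral $\rint$ one has
\be
\rint(xy)=\rint\bigl(S^2(y)\,\alpha\rightharpoonup x\bigr),
\ee
or an equivalent statement, and then to verify that under $\alpha=\epsilon$ and $S^2=\pivot(\arg)\pivot^{-1}$ the substitution collapses to genuine symmetry. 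I would then read off the left case by applying Lemma~\ref{lem:left-s}, which lets one transport statements about $\rint$ to $\lint$ via the antipode, and by using the defining relation~\eqref{eq:mug-def-rel-left} for $\lintg$. The main obstacle, then, is not conceptual but bookkeeping: getting the precise placement of the modular element and the power of $S$ right so that the pivot exactly undoes the $S^2$-twist, which is what the unibalanced/unimodular hypotheses are engineered to guarantee.
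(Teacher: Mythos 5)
Your proposal is correct and follows essentially the same route as the paper: non-degeneracy by shifting the non-degenerate integral $\rint$ by the invertible pivot, and symmetry by combining the identity $\rint(xy)=\rint(S^2(y)x)$ from \cite[Thm.\,10.5.4\,(e)]{Ra-book} (valid because unimodularity forces the distinguished group-like element of $\UH^*$ to be $\epsilon$) with $S^2(y)=\pivot y\pivot^{-1}$, then transporting to the left case via Lemma~\ref{lem:left-s}. The only cosmetic slip is attributing the key input to Radford's $S^4$ formula — what is needed is the trace-type identity above, which is exactly the one you then invoke, and the computation collapses precisely as you predict.
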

\begin{proof}  
 By Proposition
\ref{prop:int-non-deg}, the forms $\rint$ and $\lint$ are non-degenerate.
The shift of the left or right integral by an invertible element
preserves this property. Hence,
  $\rint_\pivot$ and $\lint_{\pivot^{-1}}$ are also non-degenerate.
By \cite[Thm.\ 10.5.4 (e)]{Ra-book}
we have
\be\label{444}
\rint(xy) = \rint\bigl(S^{2}(y) x\bigr)\ 
\ee
since in the unimodular case the distinguished
group-like element of $H^*$ is the counit $\epsilon$. 
Similarly, we have 
\be\label{eq:lintg-S}
\lint\left(S^{-2}(y)x\right)=\rint\left(S\left(S^{-2}(y)x\right)\right)=\rint\left(S(x)S^{-1}(y)\right)=
\rint\left(S(y)S(x)\right)=\lint(xy)
\ee
where we applied Lemma \ref{lem:left-s} for the first 
and last,
and
\eqref{444} for the third equalities.

By an easy computation, we check that $\rint_\pivot$ is symmetric:
$$\rint_\pivot(xy)=\rint(\pivot xy)=\rint(S^2(y)\pivot x)=
\rint(\pivot yx)=\rint_\pivot(yx)$$
where we used~\eqref{444}  and  $S^2(y)=\pivot y\pivot^{-1}$.
Similarly, using~\eqref{eq:lintg-S} we get
\begin{equation*}\lintg(xy)=\lint(\pivot^{-1} xy)=\lint(S^{-2}(y)\pivot^{-1} x)=
\lint(\pivot^{-1} yx)=\lintg(yx)\ .
\end{equation*}
\end{proof}

 By the previous proposition~\ref{lemma:mug-sym} we thus have two non-degenerate
 symmetric forms on a unimodular pivotal $H$, given by the symmetrised 
 left and right integrals. By 
Proposition~\ref{cor:ext} and 
 Theorem~\ref{Thm:nondegeneracy}
 they define two Calabi-Yau structures on $\Hpmod$.
 In other words we have

\begin{corollary}
The symmetric forms $\rintg$  and $\lint_{\pivot^{-1}}$
make a
unimodular  pivotal Hopf
 algebra $(\UH, \pivot)$ a symmetric Frobenius algebra.
 \end{corollary}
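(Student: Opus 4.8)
The plan is to recall that a \emph{symmetric Frobenius algebra} is precisely a finite-dimensional algebra $A$ equipped with a non-degenerate symmetric linear form $\lambda\in A^*$. Indeed, given such a $\lambda$, the associated bilinear pairing $(x,y)\mapsto \lambda(xy)$ is automatically associative, since $\lambda\bigl((xy)z\bigr)=\lambda\bigl(x(yz)\bigr)$ follows from associativity of the multiplication in $A$; it is non-degenerate by the non-degeneracy of $\lambda$; and it is symmetric exactly because $\lambda(xy)=\lambda(yx)$. Conversely, any symmetric Frobenius form arises from such a $\lambda$. Thus it suffices to exhibit a non-degenerate symmetric linear form on $\UH$.

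First I would invoke Proposition~\ref{lemma:mug-sym}, which asserts that for a unimodular pivotal Hopf algebra $(\UH,\pivot)$ both the symmetrised right integral $\rintg$ and the symmetrised left integral $\lint_{\pivot^{-1}}$ are non-degenerate symmetric linear forms on $\UH$. Taking $\lambda=\rintg$, respectively $\lambda=\lint_{\pivot^{-1}}$, therefore equips $\UH$ with a non-degenerate symmetric trace form, and hence with the structure of a symmetric Frobenius algebra, by the dictionary of the previous paragraph.

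There is essentially no obstacle at this point: the corollary is a direct reformulation of Proposition~\ref{lemma:mug-sym} in the language of Frobenius algebras, the only genuine content being the equivalence between non-degenerate symmetric linear forms and symmetric Frobenius structures. I would add the remark that the two forms $\rintg$ and $\lint_{\pivot^{-1}}$ need not coincide and in general define two distinct Frobenius structures on $\UH$, which agree up to a scalar precisely when $\UH$ is unibalanced, linking back to Theorem~\ref{thm:main}.
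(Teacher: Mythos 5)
Your proposal is correct and matches the paper's own reasoning: the corollary there is stated as an immediate consequence of Proposition~\ref{lemma:mug-sym}, with the (standard) dictionary between non-degenerate symmetric linear forms and symmetric Frobenius structures left implicit, exactly as you spell out. Your closing remark that the two forms differ in general and agree precisely in the unibalanced case is also consistent with the paper's discussion around Lemma~\ref{agsquare} and Theorem~\ref{thm:main}.
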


 We recall now  definition~\eqref{eq:HH0-alg-def} of $0^{\text{th}}$-Hochschild homology ${\rm HH}_0(H)$ of an algebra $H$.

\newcommand{\ch}{\mathrm{Ch}(H)}
\begin{proposition}
A  right symmetrised integral
on a unimodular pivotal Hopf algebra $H$
gives a non-degenerate symmetric pairing between the center $Z(H)$ and ${\rm HH}_0(H)$: 
\be\label{eq:pair-Z-HH0}
(z,h)\mapsto \rintg(zh) \gc \qquad z\in Z(H)\gc \; h\in {\rm HH}_0(H)\gp
\ee
Similarly, a left  symmetrised integral gives a non-degenerate symmetric pairing.
\end{proposition}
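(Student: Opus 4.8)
The plan is to reduce the statement to the fact, established in Proposition~\ref{lemma:mug-sym}, that $\rintg$ is a non-degenerate symmetric linear form on $H$, i.e.\ that $H$ together with $t:=\rintg$ is a symmetric Frobenius algebra. For such an algebra the claim is a purely linear-algebraic consequence of the identity $[H,H]^\perp = Z(H)$, where orthogonality is taken with respect to the non-degenerate pairing $\langle x,y\rangle := t(xy)$ on the finite-dimensional space $H$.

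First I would check that the form descends to the quotient ${\rm HH}_0(H)=H/[H,H]$ in the second argument: for central $z$ and a commutator $c=xy-yx$ one has $zc = zxy-zyx$, and using symmetry of $t$ together with centrality of $z$ one computes $t(zxy)=t(yzx)=t(zyx)$, so $t(zc)=0$. Hence $(z,h)\mapsto t(zh)$ is well defined on $Z(H)\times {\rm HH}_0(H)$. The symmetry of the pairing is then immediate, since $zh=hz$ for central $z$, and it is also reflected in the symmetry of $t$ itself.

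The key step is to identify the orthogonal complement of $[H,H]$. I would argue that $a\in[H,H]^\perp$ iff $t\bigl(a(xy-yx)\bigr)=0$ for all $x,y$; rewriting $t\bigl(a(xy-yx)\bigr)=t(axy)-t(ayx)$ and using symmetry of $t$ in the form $t(ayx)=t(xay)$, this equals $t\bigl((ax-xa)y\bigr)$. Thus $a\in[H,H]^\perp$ iff $t\bigl((ax-xa)y\bigr)=0$ for all $x,y$, which by non-degeneracy of $t$ is equivalent to $ax=xa$ for all $x$, i.e.\ $a\in Z(H)$. Taking orthogonal complements once more and using that double orthogonal complements return the original subspace for a non-degenerate form on a finite-dimensional space, one obtains $Z(H)^\perp=[H,H]$.

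Finally, non-degeneracy of the induced pairing is the standard statement that a non-degenerate form on a finite-dimensional space restricts to a perfect pairing between a subspace $U$ and the quotient $H/U^\perp$: with $U=Z(H)$ and $U^\perp=Z(H)^\perp=[H,H]$ this yields exactly the perfect pairing $Z(H)\times {\rm HH}_0(H)\to\ok$. The left version follows verbatim with $t=\lintg$, which is likewise symmetric and non-degenerate by Proposition~\ref{lemma:mug-sym}. I expect the only genuine subtlety to be the computation $[H,H]^\perp=Z(H)$; everything else is formal once the symmetric non-degenerate structure of $\rintg$ is in hand.
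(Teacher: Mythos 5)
Your proof is correct, and it differs from the paper's only in how the key algebraic fact is handled. Both arguments rest on Proposition~\ref{lemma:mug-sym} (that $\rintg$, resp.\ $\lintg$, is a non-degenerate symmetric form, making $H$ symmetric Frobenius); from there the paper simply identifies linear forms on ${\rm HH}_0(H)$ with symmetric forms on $H$ and invokes Brou\'e's lemma \cite[Lem.\,2.5]{Broue:2009}, which says that $z\mapsto t(z\,-)$ is an isomorphism $Z(H)\xrightarrow{\sim}\mathrm{Ch}(H)$ onto the space of symmetric forms -- surjectivity and injectivity of that map are exactly non-degeneracy of the pairing in its two slots. What you do instead is prove that cited lemma from scratch: your computation $t\bigl(a(xy-yx)\bigr)=t\bigl((ax-xa)y\bigr)$ giving $[H,H]^\perp=Z(H)$, followed by the double-complement identity $Z(H)^\perp=[H,H]$ for the non-degenerate symmetric form $(x,y)\mapsto t(xy)$ in finite dimension, is precisely the content behind Brou\'e's isomorphism, and your final step (a non-degenerate form induces a perfect pairing $U\times H/U^\perp\to\ok$) is sound. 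So your route is self-contained and elementary where the paper's is shorter by citation; the mathematics underneath is the same, all steps check out, including the well-definedness verification $t(zc)=0$ for central $z$ and commutators $c$, and the left case carries over verbatim as you say.
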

\begin{proof}
We first recall that a linear form $f$ on ${\rm HH}_0(H)$ satisfies $f(ab-ba)=0$, for $a,b\in H$, or defines a symmetric linear form on $H$.  
For a given non-degenerate symmetric form $t$, we have an isomorphism between the center  and  the space $\ch$ of symmetric forms on~$H$, see e.g.\ \cite[Lem.\,2.5]{Broue:2009}: 
\be
Z(H) \xrightarrow{\;\sim\;} \ch\gc \qquad z\mapsto t(z-)\gp
\ee
By Proposition~\ref{lemma:mug-sym}, we can
choose $t=\rintg$,  and therefore any linear form $f$ on ${\rm HH}_0(H)$ can be written as $\rintg(z-)$ for an appropriate $z\in Z(H)$. This is equivalent to non-degeneracy of the pairing~\eqref{eq:pair-Z-HH0}. The proof for a left symmetrised integral is similar.
\end{proof}

\subsection*{Unibalanced Hopf algebras}
We first recall that a right integral generates a one-dimensional right
 ideal of $\UH^*$, which is also a left ideal on
 $(H^*)^{\op}$, by the argument 
 in~\cite[p.\,306]{Ra-book} 
we have
\begin{equation}\label{eq:rint-a}
  (\id\tensor\rint)\Delta(x)=\rint(x)\comod ,
\end{equation}
for a certain $\comod\in \UH$ called \textit{comodulus} 
which  is   group-like. 
Multiplying \eqref{eq:rint-a} with $\comod^{-1}$
and evaluating at $\comod x$, we see that
the left and right integrals are related by the comodulus: 
\be\label{eq:lint=rint}
\lint(x) = \rint(\comod x).
\ee
Recall that in Lemma~\ref{lem:left-s} we had another choice for $\lint(x)$ using the antipode. Let us show that these  two choices agree.

\begin{proposition}\label{pro:comod}
We have the equality $\rint(S(x)) =  \rint(\comod x)$.
\end{proposition}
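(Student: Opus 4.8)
The plan is to reduce the identity to a classical Fourier-type relation between the integral, the cointegral and the antipode, rather than arguing only up to scalar. Fix a cointegral $\coint\in\UH$ normalised so that $\rint(\coint)=1$; this is possible because $\rint$ is non-degenerate (Proposition~\ref{prop:int-non-deg}) and a nonzero cointegral exists, is unique up to scalar, and pairs non-trivially with the integral \cite{LarsonSweedler}. Evaluating the defining relation \eqref{eq:rint-a} of the comodulus at $x=\coint$ and using $\rint(\coint)=1$ gives the handy expression
\be
\comod=\sum_{(\coint)}\coint_{(1)}\,\rint(\coint_{(2)})\ .
\ee
Substituting this into the left-hand side we are after yields
\be
\rint(\comod\,x)=\sum_{(\coint)}\rint\bigl(\coint_{(1)}x\bigr)\,\rint(\coint_{(2)})\ ,
\ee
so everything is reduced to understanding the sum $\sum_{(\coint)}\rint(\coint_{(1)}x)\,\coint_{(2)}$.

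The key step is the classical Larson--Sweedler / Radford inversion identity
\be
\sum_{(\coint)}\rint\bigl(\coint_{(1)}x\bigr)\,\coint_{(2)}=S(x)\ ,\qquad x\in\UH\ ,
\ee
which expresses that the two maps $\UH\to\UH^*$ and $\UH^*\to\UH$ built from $\rint$ and $\coint$ are mutually inverse up to the antipode (see \cite{Ra-book}). Granting this, applying $\rint$ to both sides and comparing with the previous display gives
\be
\rint(\comod\,x)=\sum_{(\coint)}\rint\bigl(\coint_{(1)}x\bigr)\,\rint(\coint_{(2)})=\rint\bigl(S(x)\bigr)\ ,
\ee
which is exactly the assertion.

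As an independent consistency check, and as a fallback if one prefers to avoid quoting the inversion identity, note that $x\mapsto\rint(S(x))$ is a left integral by Lemma~\ref{lem:left-s}, while $x\mapsto\rint(\comod\,x)$ is a left integral by the derivation of \eqref{eq:lint=rint}; since the space of left integrals is one-dimensional they must be proportional, and the computation above pins the proportionality constant to $1$. I expect the main obstacle to be the bookkeeping of the left/right conventions: the exact placement of $S$ versus $S^{-1}$ and the choice of left versus right cointegral in the inversion identity must be matched to the conventions fixed by \eqref{eq:rint-a}, so that the resulting normalisation (equivalently $\rint(\comod\,S^{-1}(\coint))=1$) comes out correctly. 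The abstract proportionality argument guarantees the identity up to a single scalar, so the entire difficulty is concentrated in verifying that this scalar equals $1$.
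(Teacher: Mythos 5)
Your proof is essentially correct, but it takes a genuinely different route from the paper, and the convention point you flagged but left open---left versus right cointegral---is precisely where the argument can break, so let me pin it down. With this paper's conventions ($\rint$ a right integral, so $\rint(x')\,x''=\rint(x)\one$ with implicit Sweedler sums), applying the defining property \eqref{eq:rint-def} to the element $yx'$ and multiplying by $S(x'')$ gives the standard relation
\be
\rint(y'x)\,y''\;=\;\rint(y\,x')\,S(x'')\ ,\qquad x,y\in\UH\ .
\ee
Taking $y=\coint^r$ a \emph{right} cointegral normalised by $\rint(\coint^r)=1$ (possible since $\coint^r$ is proportional to $S(\coint)$ and $\rint(S(\coint))=\rint(\coint)\neq 0$ by \cite[Eq.\,(10.4)]{Ra-book}), the right-hand side collapses to $S(x)$ and you get exactly your inversion identity $\rint((\coint^r)'x)\,(\coint^r)''=S(x)$; your computation then closes, since $\comod=(\coint^r)'\,\rint((\coint^r)'')$ from \eqref{eq:rint-a} is valid for any element normalised against $\rint$. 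However, taking $y=\coint$ a \emph{left} cointegral gives instead $\rint(\coint'x)\,\coint''=\alpha(x')\,S(x'')$, where $\alpha$ is the modulus of \eqref{eq:Lambda-modulus}; since the proposition is stated without assuming unimodularity, in general this differs from $S(x)$ by a twist by $\alpha$, not by a scalar, so your closing remark that ``the entire difficulty is concentrated in verifying that this scalar equals $1$'' is too optimistic: for the left cointegral the quoted identity is simply false. The final conclusion happens to survive even then, because applying $\rint$ absorbs the twist: $\alpha(x')\,\rint(S(x''))=\alpha(x')\,\lint(x'')=\lint(x)$ by \eqref{eq:lint-def}; but that repair step is not in your text, so you should either fix the right cointegral explicitly or add it.

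For comparison, the paper runs what you call your fallback: by Lemma~\ref{lem:left-s} and \eqref{eq:lint=rint} both $\rint(S(-))$ and $\rint(\comod\,-)$ are left integrals, hence proportional, and the scalar is pinned by a one-line evaluation on the \emph{left} cointegral, using $\rint(S(\coint))=\rint(\coint)=1$ against $\rint(\comod\coint)=\epsilon(\comod)\rint(\coint)=1$, the latter because $\coint$ is a left cointegral and $\comod$ is group-like. So the paper needs only the single classical input $\rint(S(\coint))=\rint(\coint)$ from \cite{Ra-book}, whereas you invoke the full Frobenius-type inversion formula; your route is heavier but normalisation-transparent and yields the identity directly, while the paper's is shorter given the Radford facts it already cites.
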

\begin{proof}
By Lemma~\ref{lem:left-s} and~\eqref{eq:lint=rint}, both  $\rint(S(x))$ and $\rint(\comod x)$ are left integrals.
Then we clearly have $\rint(S(x))=\lambda\rint(\comod x)$, for some $\lambda\in\ok^\times$, because the left integral is unique up to a scalar. To compute the proportionality coefficient it is enough to evaluate both  forms  $\rint(S(-))$ and $\rint(\comod -)$ on one element, we choose it to be the left cointegral $\coint$. Without loss of generality, we will assume  $\rint(\coint)=1$, see~\cite[Thm.\,10.2.2 (b)]{Ra-book}. Then by~\cite[Eq.\,(10.4)]{Ra-book} we also have $\rint(S(\coint))=1$. Therefore,
\be\label{eq:1-eps-a}
1= \rint(S(\coint))  = \lambda \rint(\comod \coint) =\lambda  \epsilon(\comod) \rint(\coint)  = \lambda  \epsilon(\comod)\ .
\ee
Recall that $\comod$ is group-like and so $\epsilon(\comod)=1$, and therefore $\lambda=1$ from the above equality.
\end{proof}

  A pivotal Hopf algebra $(\UH,\pivot)$ is called \textit{unibalanced} if its right symmetrised integral is also left. 
For a given right integral, let us 
choose the left integral as $\lint = \rint \circ S$  
(compare in Lemma~\ref{lem:left-s}). Then in the unibalanced case we have the equality
\be\label{eq:rintg-lintg}
\rintg=\lintg\gp
\ee 
Indeed, we have $\lintg = \lambda\rintg$ for some $\lambda\in\ok^{\times}$ and to compute $\lambda$ we evaluate the symmetrised integrals on left cointegral~$\coint$.
We note that by~\cite[Eq.\,(10.4)]{Ra-book}   $\rint$
and $\lint$ take same non zero value on $\coint$, say $\rint(\coint)=\lint(\coint)=a\in\ok^\times$.
Then, we have  $a=\lintg(\coint) =\lambda \rintg(\coint)=a\lambda$, and so $\lambda=1$.

We have the following characterisation of the unibalanced case in terms of the comodulus~$\comod$.

\begin{lemma}\label{agsquare} 
A pivotal Hopf algebra $(\UH,\pivot)$
is unibalanced if and only if 
$\comod=\pivot^2$. 
\end{lemma}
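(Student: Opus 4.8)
The plan is to reduce the stated equivalence to a single identity between two linear forms on $\UH$, and then strip that identity down to one between group-like elements using non-degeneracy of the integral. As recalled just before the statement (see~\eqref{eq:rintg-lintg}), once we fix the left integral to be $\lint=\rint\circ S$ as in Lemma~\ref{lem:left-s}, the pair $(\UH,\pivot)$ is unibalanced precisely when $\rintg=\lintg$. So I would adopt $\rintg=\lintg$ as the working reformulation of unibalancedness and show it is equivalent to $\comod=\pivot^2$.

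The heart of the argument is to express both symmetrised integrals in the common shape $\rint(\xi\,x)$ with $\xi$ group-like. On the right this is immediate, $\rintg(x)=\rint(\pivot x)$. For the left one I would compute
\[
\lintg(x)=\lint(\pivot^{-1}x)=\rint\bigl(S(\pivot^{-1}x)\bigr)=\rint\bigl(S(x)\pivot\bigr),
\]
using $\lint=\rint\circ S$, antimultiplicativity of $S$, and $S(\pivot^{-1})=\pivot$ (valid since $\pivot$ is group-like). Rewriting $S(x)\pivot=S(\pivot^{-1}x)$ and invoking Proposition~\ref{pro:comod} in the form $\rint(S(y))=\rint(\comod y)$ with $y=\pivot^{-1}x$ then gives $\lintg(x)=\rint(\comod\pivot^{-1}x)$.

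With both forms written this way, the condition $\rintg=\lintg$ reads $\rint\bigl((\pivot-\comod\pivot^{-1})x\bigr)=0$ for all $x\in\UH$. By non-degeneracy of $\rint$ (Proposition~\ref{prop:int-non-deg}) the left radical is trivial, so this holds if and only if $\pivot=\comod\pivot^{-1}$, i.e.\ $\comod=\pivot^2$ after right-multiplication by the invertible $\pivot$; since the whole chain is a string of equivalences, both directions of the lemma follow at once. The manipulations are routine, and the main (mild) obstacle is simply correct bookkeeping of the antipode and pivot so that Proposition~\ref{pro:comod} applies cleanly, together with the observation that the potential scalar ambiguity in ``$\rintg$ is a left integral'' has already been pinned to $1$ by the counit normalisation behind~\eqref{eq:rintg-lintg}.
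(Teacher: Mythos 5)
Your proof is correct and essentially the paper's own: the paper's converse direction is word-for-word your computation --- apply $\lint(x)=\rint(\comod x)$ (i.e.\ \eqref{eq:lint=rint}, which Proposition~\ref{pro:comod} identifies with your choice $\lint=\rint\circ S$) to $\pivot^{-1}x$, use the normalisation $\rintg=\lintg$ from \eqref{eq:rintg-lintg}, and conclude $\comod\pivot^{-1}=\pivot$ from the non-degeneracy of $\rint$ (Proposition~\ref{prop:int-non-deg}). The only cosmetic difference is the forward implication, which the paper gets by evaluating \eqref{eq:rint-a} on $\pivot x$ to exhibit the defining relation \eqref{left} of a symmetrised left integral, whereas you obtain it by reversing the same chain of equivalences; this is equivalent (and your passing attribution of the scalar normalisation to the counit is a slip --- it is fixed by evaluating both integrals on the cointegral --- but nothing in your argument depends on it).
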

\begin{proof}
Assume first that $\comod=\pivot^2$. Then
evaluating~\eqref{eq:rint-a} on $\pivot x$ we get
\be \label{left}
  (\pivot^{-1}\tensor\rintg)\Delta(x)=\rintg(x)\one \gp
\ee
which is the  defining relation for the symmetrised left integral, and therefore $\rintg = \lintg$.

For the other direction, assume now $(\UH, \pivot)$ is unibalanced, 
then applying
 \eqref{eq:lint=rint} to $\pivot^{-1}x$ and using~\eqref{eq:rintg-lintg} we get the equality 
 \be\label{eq:rint-comod}
 \rint\bigl((\comod \pivot^{-1}-\pivot)x\bigr)=0\gc \qquad \text{ for any }\; x\in\UH\gp
 \ee
By Proposition~\ref{prop:int-non-deg}, $\rint$ is non-degenerate. 
Therefore, the equality~\eqref{eq:rint-comod} holds if and only if $\comod \pivot^{-1}=\pivot$.
\end{proof}

Quantum groups at roots of unity  provide many examples of unimodular and unibalanced pivotal Hopf algebras, see details in Section~\ref{qgroups}.

\subsection*{Pivotal structure on $\Hmod$}\label{sec:pivot-Hopf}
For a pivotal Hopf algebra $(\UH,\pivot)$,
each object $V$ in $\Hmod$ has a left dual
$V^*=\Hom_\kk(V,\kk)$ with the $\UH$ action defined by $(hf)(x)=f(S(h)x)$, $f\in V^*$, $h,x\in \UH$,
while the action by $\balance$ corresponds to the 
 natural
isomorphism
$\delta$ between the identity functor on $\Hmod$ and the double duality functor $(-)^{**}$ . More precisely,
we have the family of isomorphisms
\be\label{eq:pivot-Hmod}
\delta_V\colon V\to V^{**}\ , \qquad \delta_V = \balance \circ \delta^{\vect}_V, \quad V\in\Hmod\ ,
\ee
where $\delta^{\vect}$ is the standard pivotal structure in the category $\vect_\ok$:
 $\delta^{\vect}_V(v) = \langle-,v\rangle$,
for the underlying vector space $V$, $v \in V$
	and $\langle-,-\rangle$ is the pairing between $V^*$ and $V$.
The  isomorphisms~\eqref{eq:pivot-Hmod} are obviously natural and monoidal.
We have therefore $\Hmod$ is pivotal.
 
In $\Hmod$, we have the standard left duality morphisms. 
Assume $\{v_j \,|\, j\in J\}$ is a basis of~$V$ and 
$\{v_j^*\,|\, j\in J\}$ is the dual basis of $V^*$, then
\begin{align}\label{E:DualitiesC}
\ev_{V}&: \:\: V^*\otimes V\rightarrow \kk, &\text{ given by } \quad &f\otimes v \mapsto f(v), \\ 
\coev_{V} &: \:\: \kk \rightarrow V\otimes V^{*}, & \text{ given by }\quad &1 \mapsto \sum_{j\in J} v_j\otimes v_j^*. \notag
\end{align}
The pivot $\pivot$ allows to define the right  duality morphisms as follows
\begin{align}\label{E:DualitiesC-right}
 \tev_V&: \:\: V\otimes V^*\rightarrow \kk, &\text{ given by }\quad & v\otimes f \mapsto f
(\pivot v)\\
\tcoev_V&: \:\: \kk \rightarrow V^{*}\otimes V, &\text{ given by }\quad & 
1 \mapsto \sum_i v_i^*\otimes 
\pivot^{-1}v_i \ , \notag
\end{align}
where we used the combination of~\eqref{eq:right-duality} and~\eqref{eq:pivot-Hmod}.

We recall the (right)
categorical trace~\eqref{eq:cat-tr-def} which is in our case
\be
\tr^{\Hmod}_V(f):= \tev_V\circ(f\tensor\id)\circ\coev_V(1)\gc
\ee
 for any $V\in\Hmod$ and $f\in\End_H(V)$.
With the  definitions above we have
\be \label{eq:q-tr}
\tr^{\Hmod}_V(f) =\tr_V(l_\pivot \circ f)
\ee
where $\tr_V(f)$ is the usual trace
 of the endomorphism $f$  of $V$. The trace~\eqref{eq:q-tr} is often called {\it quantum trace}.
Analogously, we can define the 
left
categorical trace
$$
^{\Hmod}\!\tr_V(f):= \ev_V\circ(\id\tensor f)\circ\tcoev_V(1)
$$
 for any $V\in\Hmod$ and $f\in\End_H(V)$.
Then we compute
\be\label{eq:lq-tr}
^{\Hmod}\!\tr_V(f) =
\sum_i v_i^* \left(f(\pivot^{-1} v_i)\right)=
\tr_V( f \circ l_{\pivot^{-1}}).
\ee
We note that the left and right traces are related. Indeed, using Lemma \ref{lem:reverse-arrows} for
$Q=\one$, $P=V$,
we have the relation
\be\label{eq:left-right}
^{\Hmod}\!\tr_V(f)=\tr^{\Hmod}_{V^*}(f^*)\ .
\ee

We are now ready  to prove Proposition \ref{prop:mod-tr-ss}.

\begin{proof}[\bf Proof of Proposition~\ref{prop:mod-tr-ss}]
We will assume that the right integral $\rint$ and the cointegral $\coint$  satisfy $\rint(\coint)=1$.
From \cite[Thm.\,10.4.1]{Ra-book}, for any $f\in\End_H(H)$, we  then have
\be\label{eq:trace_r} \tr_H(f)=\rint\bigl(S(\coint'')f(\coint')\bigr)\ee
and
\be\label{eq:trace_l}\tr_H(f)=\rint\bigl(S(f(\coint''))\coint'\bigr)  \ .\ee
We use here  Sweedler's notation with implicit sum: $\Delta(\coint)=\coint'\otimes \coint''$.
From Lemma \ref{eq:hop=end}, any $f\in\End_H(H)$ is right multiplication by $x=f(1)$, i.e.\ $f=r_x$.
The right categorical trace for $f=r_x$ is obtained from \eqref{eq:trace_r} as follows:
\begin{align*}
\tr_H(l_\pivot\circ f)&= \rint\bigl(S(\coint'')\pivot\coint'x\bigr)=\rint\bigl(S(\coint'')S^2(\coint') \pivot x\bigr)\\
&=\rint\bigl(S(S(\coint')\coint'') \pivot x\bigr)=\epsilon(\coint)\rintg(x)\gp
\end{align*}
We similarly get the left categorical trace using \eqref{eq:trace_l}
\begin{align*}
\tr_H(l_{\pivot^{-1}}\circ f)&= \rint\bigl(S(\pivot^{-1}\coint''x)\coint'\bigr)=\rint\bigl(S(x)S(\coint'')\pivot\coint'\bigr)\\
&=\rint\bigl(S(x)\pivot S^{-1}(\coint'')\coint'\bigr)=\rint\bigl(S(x)\pivot S^{-1}(S(\coint')\coint'')\bigr)\\
&=\epsilon(\coint)\rint\bigl(S(\pivot^{-1}x)\bigr)=\epsilon(\coint)\lintg(x)\gc
\end{align*}
where the last equality comes from the formula for a left integral in Lemma \ref{lem:left-s}. 
By \cite[Cor.\,10.3.3]{Ra-book}
 $\epsilon(\coint)$ is non-zero if and only if the algebra $H$ is semisimple. This shows that the categorical traces agree with~\eqref{eq:cat-tr-rintg} up to a non-zero scalar if and only if $\Hmod$ is semisimple.
\end{proof}

From Proposition~\ref{prop:mod-tr-ss}, we conclude
 that in the 
non-semisimple case $\tr_H(l_\pivot r_x)$ is zero 
for all $x\in H$,  while $\rintg(x)$ is not. 
This naturally suggests that $\rintg$ provides  a non-trivial generalisation of the categorical trace for the tensor ideal of projective $\UH$-modules, recall Lemma~\ref{lem:tHH-tH} for the case $G=H$. Such a generalisation indeed exists and is given by the (right) modified trace -- this is the content of our  Theorem~\ref{thm:main}. The proof is rather long and requires more preparation, we delegate it to Section~\ref{sec:proof}.
\vspace{2mm}

{\bf Remark.}
 Proposition~\ref{prop:mod-tr-ss} can also be  deduced directly from Theorem \ref{thm:main}. Indeed, the right symmetrised
 integral $\rintg$ gives a non-zero right modified trace on $H$, which is
 unique up to a scalar.
As we mentioned in Introduction, the right categorical trace is also a right modified trace. However, the right categorical trace is non-zero on $H\in\Hpmod$ if and only if $\Hpmod$ is semisimple, see e.g.~\cite[Rem.\,4.6]{Azat-Runkel}, or equivalently if and only if $\Hmod$ is semisimple. Therefore,  the two traces  agree if and only if $H$ is semisimple as an algebra.
Similar argument applies for the left categorical trace. 
\vspace{2mm}

It is interesting to note an application of  Theorem~\ref{thm:main} in the classical context -- to the modular representation theory of finite groups. 
Let $G$ be a finite group  and consider its group algebra $\mathbb F_p[G]$ over the field $\ok=\mathbb F_p$ when  the characteristic $p$ divides the order of the group. It is a unimodular pivotal Hopf algebra with $\pivot=\one$ and the two sided cointegral is $\coint = \sum_{g\in G} g$. So, the symmetrised integral in this case is just the integral and it  provides a non-degenerate modified trace on the subcategory of projective $\mathbb F_p[G]$-modules. To our knowledge, such modified traces were not observed in this generality.
 However we should also mention that existence and non-degeneracy of the modified trace 
 in the finite characteristic case was proven in~\cite{Azat-Runkel} in the case of Drinfeld doubles of $\mathbb F_p[G]$ and under an extra technical assumption, which did not work e.g.\ in the case of abelian $p$-groups.

As another corollary of   Theorem~\ref{thm:main}  and Theorem~\ref{thm:CYcomp} 
we conclude this section with the following (c.f.\ Corollary~\ref{cor:1.1}). 

\begin{corollary}\label{cor:CYcomp-H}
Let $(H, \pivot)$ be a unimodular pivotal Hopf algebra. Then $\Hpmod$ admits a unique up-to-scalar CY structure  compatible with duality on the right, and a possibly different  CY structure  compatible with duality on the left. The  CY structure on $\Hpmod$ is compatible  with duality on  the right and the left if and only if $H$ is unibalanced.
\end{corollary}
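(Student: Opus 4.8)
The plan is to read the statement straight off the dictionary supplied by Theorem~\ref{thm:CYcomp} together with the dimension counts of Theorem~\ref{thm:main}, so that no new computation is required. First I would apply Theorem~\ref{thm:CYcomp} to the finite pivotal category $\cat=\Hmod$, for which $\proj(\cat)=\Hpmod$: it tells us that a Calabi-Yau structure on $\Hpmod$ compatible with duality on the right is exactly a non-degenerate family of trace maps having the right partial trace property, which by the definition of a modified trace is nothing but a non-degenerate right modified trace on $\Hpmod$ (and symmetrically on the left). Thus the corollary reduces to counting non-degenerate right, respectively left, modified traces.

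For existence and uniqueness on the right I would invoke Theorem~\ref{thm:main}: the space of right modified traces on $\Hpmod$ is one-dimensional, generated by $f\mapsto \rintg(f(\one))$, and this generator is non-degenerate. Hence every nonzero right modified trace is a nonzero scalar multiple of it and is again non-degenerate, so transporting this through Theorem~\ref{thm:CYcomp} shows that a Calabi-Yau structure compatible with duality on the right exists and is unique up to a scalar. The left case is identical with generator $f\mapsto \lintg(f(\one))$; since $\rintg$ and $\lintg$ need not be proportional, the two Calabi-Yau structures may genuinely differ, which is exactly the ``possibly different'' in the statement.

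It remains to treat the two-sided case. A single Calabi-Yau structure on $\Hpmod$ compatible with duality on both the right and the left is, again by Theorem~\ref{thm:CYcomp}, a non-degenerate family $\{t_P\}$ enjoying both partial trace properties at once, i.e.\ a non-degenerate two-sided modified trace. By the final assertion of Theorem~\ref{thm:main} the right modified trace coincides with the left one precisely when $H$ is unibalanced, equivalently the one-dimensional spaces generated by $\rintg$ and by $\lintg$ coincide iff $H$ is unibalanced. This yields the desired equivalence.

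The only point requiring care---and the step I would treat as the main (and rather minor) obstacle---is the bookkeeping in the last paragraph: one must ensure that ``compatible on the right and on the left'' refers to one and the same family $\{t_P\}$, not to two independently chosen structures, so that the conclusion is controlled by the equality of the right and left modified traces rather than by their mere simultaneous existence. Once this is pinned down, the unibalanced criterion of Theorem~\ref{thm:main} closes the argument.
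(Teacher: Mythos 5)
Your proposal is correct and follows essentially the same route as the paper, which states this corollary without separate proof precisely as the combination of Theorem~\ref{thm:CYcomp} (translating CY structures compatible with right/left duality into non-degenerate right/left modified traces) with Theorem~\ref{thm:main} (one-dimensionality and non-degeneracy of each space, and the unibalanced criterion for their coincidence). Your care in the last paragraph that "compatible on the right and on the left" refers to a single family $\{t_P\}$ is the correct reading, and it is exactly what the final assertion of Theorem~\ref{thm:main} controls.
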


\newcommand{\Heps}{{}_\eps H}
\newcommand{\Weps}{{}_\eps W}

\section{Decomposition of  tensor powers of the regular representation}\label{sec:HH-decomp}
In this section, for any finite-dimensional Hopf algebra $H$
we decompose the tensor product of the regular representation with itself and describe
the centralizer algebra $\End_H (H^{\otimes 2})$ explicitly, and more generally $\End_H(H\otimes W)$ for any $W\in \Hmod$.
We will need these endomorphism algebras to prove our main theorem in  next Section~\ref{sec:proof}.

\subsection*{Diagrammatics for Hopf algebras}\label{sec:conv}
We will use the following diagrams for
the structural maps corresponding to the Hopf algebra data:
\be
\mu \ \; = \  \; \ipic{mult}{.16} \quad ,
\quad \Delta \ \;  =\ \;  \ipic{coprod}{.16} \quad , \quad
 \eta \ \; = \ \; \ipic{unit}{.17} \quad , \quad
  \eps \ \; = \ \; \ipic{counit}{.17} 
\quad , \quad S \; = \;  \ipic{antipode}{.17} \quad .
\put(-336,30){\scriptsize $H$}  \put(-351,-36){\scriptsize $H$} \put(-323,-36){\scriptsize $H$}  
\put(-240,-36){\scriptsize $H$}  \put(-254,30){\scriptsize $H$} \put(-225,30){\scriptsize $H$} 
\put(-157,18){\scriptsize $H$} \put(-87,-24){\scriptsize $H$} 
\put(-22,-28){\scriptsize $H$} \put(-22,22){\scriptsize $H$}
\ee
We note that these are maps in the category $\vect_\ok$ of finite-dimensional vector spaces over~$\ok$.
 Here is  a list of  graphical identities corresponding to the Hopf algebra axioms we use extensively below:
\be
 \ipic{coassoc-l}{.16} \  = \ 
 \ipic{coassoc-r}{.16}\qquad, \qquad
  \ipic{delta-alg-map-l}{.16} \  = \ \; \ipic{delta-alg-map-r}{.16}  
  \put(-272,31){\scriptsize $H$} \put(-248,31){\scriptsize $H$} \put(-234,31){\scriptsize $H$} 
  \put(-249,-37){\scriptsize $H$}  
    \put(-119,34){\scriptsize $H$}    \put(-91,34){\scriptsize $H$}  
  \put(-120,-40){\scriptsize $H$}    \put(-92,-40){\scriptsize $H$}  
\ee
where the first is for coassociativity, the second  says that $\Delta$ is an algebra map, and the antipode axioms
(here, we skip labels $H$ for brevity)
\be\label{eq:S-axiom}
  \ipic{S-alg-map-l}{.18} \  = \ \; \ipic{S-alg-map-r}{.18}   \qquad, \qquad
  \ipic{s-axiom-l}{.22} \  = \ \ipic{s-axiom-m}{.22}  \ = \ \ipic{s-axiom-r}{.22}\qquad,  \qquad
  \ipic{S-coalg-map-l}{.18} \  = \ \; \ipic{S-coalg-map-r}{.18}  
\ee
where the first and third  say that $S$ is an anti-algebra and anti-coalgebra map, respectively.
The axioms involving unit and counit are rather clear and we omit them.

\subsection*{The case of $H^{\otimes 2}$ }
Let us denote by 
$\Heps$ the vector space underlying $\UH$  equipped with the trivial action of $\UH$, i.e.\ for $m\in \Heps$ and $h\in H$ we have $h m = \eps(h)m$.
As a $H$-module, $\Heps$ is isomorphic to $\dim\UH$ copies of the trivial representation. We use  Sweedler's notation with implicit sum: $\Delta(h)=h'\otimes h''$.

\begin{theorem}\label{thm:tensor-powers}
We have for all $h\in H$ and $m\in \Heps$
\begin{enumerate}
\item[$(a)$]
the map
\be\label{eq:phi-def}
\begin{array}{rrcl}
\phi:&\UH \otimes \Heps&\rightarrow &\UH\otimes \UH\\
&h\otimes m&\mapsto&h'\otimes h''m
\end{array}
\ee
 is an isomorphism of $\UH$-modules whose inverse is
\be
\begin{array}{rrcl}
\psi:&\UH \otimes \UH&\rightarrow &\UH\otimes \Heps\\
&x\otimes y&\mapsto&x'\otimes S(x'')y\ ;
\end{array}
\ee

\item[$(b)$]
the map 
\be\label{eq:phil-def}
\begin{array}{rrcl}
\phi^l:&\Heps \otimes \UH&\rightarrow &\UH\otimes \UH\\
&m\otimes h&\mapsto&h'm\otimes h''
\end{array}
\ee
 is an isomorphism of $\UH$-modules whose inverse is
\be\label{eq:psil-def}
\begin{array}{rrcl} 
\psi^l:&\UH \otimes \UH&\rightarrow &\Heps\otimes \UH\\
&x\otimes y&\mapsto&S^{-1}(y')x\otimes y'' \ .
\end{array}
\ee
\end{enumerate}
\end{theorem}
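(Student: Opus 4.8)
The plan is to verify Theorem~\ref{thm:tensor-powers} by direct computation in two independent checks: first that $\phi$ and $\phi^l$ are morphisms of $H$-modules, and second that the claimed inverses $\psi$ and $\psi^l$ are genuinely two-sided inverses. Since the formulas are explicit, both checks reduce to manipulating Sweedler notation using only the coassociativity of $\Delta$, the counit axiom, the compatibility of $\Delta$ with multiplication, and the antipode axiom $\mu\circ(S\otimes\id)\circ\Delta=\eta\circ\epsilon$ (together with its variant involving $S^{-1}$, which is available since $S$ is invertible by~\cite{Iovanov}). The graphical calculus set up in this section makes each of these steps a short topological rearrangement, so I would present the verifications diagrammatically rather than grind through indices.

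First I would check that $\phi$ intertwines the $H$-actions. On $H\otimes{}_\epsilon H$ the action is $h\act(a\otimes m)=(ha)'\otimes\epsilon(h'')m=(h'a)\otimes(\epsilon(h'')m)$, while on $H\otimes H$ it is the diagonal action $h\act(x\otimes y)=h'x\otimes h''y$. Applying $\phi$ to $h\act(a\otimes m)$ and comparing with $h\act\phi(a\otimes m)$, the equality follows from coassociativity and the fact that $\Delta$ is an algebra map; the $\epsilon(h'')$ on the trivial factor is exactly what gets absorbed when one splits the coproduct of $ha$. The analogous computation handles $\phi^l$, with the roles of the two tensor factors and the position of the multiplication interchanged.

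Next I would verify $\psi\circ\phi=\id$ and $\phi\circ\psi=\id$. For $\psi\circ\phi$, starting from $h\otimes m$ we get $\psi(h'\otimes h''m)=(h')'\otimes S((h')'')\,h''m$; coassociativity lets me relabel the three coproduct legs of $h$ and then the antipode axiom collapses $S(h^{(2)})h^{(3)}=\epsilon(h^{(2)})\one$, returning $h\otimes m$. For $\phi\circ\psi$, from $x\otimes y$ we obtain $\phi(x'\otimes S(x'')y)=(x')'\otimes(x')''\,S(x'')\,y$, and again coassociativity plus the antipode axiom $x^{(2)}S(x^{(3)})=\epsilon(x^{(2)})\one$ recovers $x\otimes y$. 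The verification for $\psi^l$ is identical in structure but uses $S^{-1}$ in place of $S$: the relevant identity is $\mu\circ(\id\otimes S^{-1})\circ\Delta=\eta\circ\epsilon$ applied on the appropriate leg.

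The computations here are genuinely routine once the Hopf axioms are in hand, so there is no deep obstacle; the main thing to watch is the bookkeeping of which antipode variant ($S$ versus $S^{-1}$) and which factor carries the multiplication, since the ``left'' maps $\phi^l,\psi^l$ are the mirror images of $\phi,\psi$ and it is easy to introduce a sign or ordering error. I would therefore do part~$(a)$ carefully in full and then obtain part~$(b)$ by the reflection symmetry of the diagrams (reflecting across a vertical axis exchanges $\mu$ with $\mu^{\op}$ and $S$ with $S^{-1}$), rather than repeating the whole calculation. The payoff is the module isomorphism $H\otimes H\cong H\otimes{}_\epsilon H\cong H^{\oplus\dim H}$, which identifies $\End_H(H\otimes H)$ with $\Mat_{n,n}(H^{\op})$ as announced, and this identification is what feeds into the proof of Theorem~\ref{thm:main} in the next section.
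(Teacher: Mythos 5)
Your proposal follows essentially the same route as the paper's proof: a direct verification, in Sweedler/graphical calculus, that the composites are the identity and that $\phi$ intertwines the actions, using only coassociativity, the algebra-map property of $\Delta$, and the antipode axioms. Two economies in the paper are worth noting: it checks only the one composite $\psi\circ\phi=\id_{\UH\tensor\Heps}$ and then invokes finite-dimensionality (a one-sided inverse of a linear endomorphism of a finite-dimensional space is two-sided) to get $\phi\circ\psi=\id$ for free, and it observes that the inverse of an intertwiner is automatically an intertwiner, so only $\phi$ strictly needs the equivariance check (a direct check for $\psi$ is included anyway); part $(b)$ is dispatched as analogous, much as you propose.

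One identity in your sketch is mis-stated and, taken literally, would make the part-$(b)$ check fail: the relation $\mu\circ(\id\tensor S^{-1})\circ\Delta=\eta\circ\epsilon$, i.e.\ $x'S^{-1}(x'')=\epsilon(x)\one$, is \emph{not} valid in a general Hopf algebra --- applying $S$ to both sides shows it is equivalent to $x''S(x')=\epsilon(x)\one$, hence to $S^2=\id$, which certainly fails in the pivotal setting of this paper. The correct $S^{-1}$-identities (the antipode axioms for $\UH^{\mathrm{cop}}$) are $S^{-1}(x'')x'=\epsilon(x)\one$ and $x''S^{-1}(x')=\epsilon(x)\one$, and these are exactly what the computations require: e.g.\ $\psi^l\bigl(\phi^l(m\tensor h)\bigr)=S^{-1}(h^{(2)})h^{(1)}m\tensor h^{(3)}=m\tensor h$. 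Relatedly, your reflection symmetry needs a slight adjustment: a literal mirror reflection of the diagrams flips \emph{both} $\mu$ and $\Delta$, producing $\UH^{\mathrm{op,cop}}$, whose antipode is $S$ again; the symmetry that actually carries part $(a)$ to part $(b)$ is the passage to $\UH^{\mathrm{cop}}$ (coproduct flipped, multiplication kept), whose antipode is $S^{-1}$, combined with flipping the tensor factors --- the flip being harmless here precisely because one factor carries the trivial action. With these two corrections your argument is complete and coincides in substance with the paper's.
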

This theorem is a well-known result in Hopf-algebras theory, see\ e.g.~\cite{Schneider}, however we give 
a proof to demonstrate graphical calculations that are often used below.
For the maps $\phi$ and $\psi$ we have the expressions
\be
\ipic{rect-map}{.15} 
\put(-17,-2){\footnotesize $\phi$}
\put(-25,34){\scriptsize $H$} \put(-10,34){\scriptsize $H$} 
 \put(-25,-40){\scriptsize $H$}   \put(-13,-40){\scriptsize $\Heps$}
  \quad = \quad 
\ipic{mapphi}{.15} 
\put(-48,36){\scriptsize $H$} \put(-16,36){\scriptsize $H$} 
 \put(-37,-42){\scriptsize $H$}   \put(-8,-42){\scriptsize $\Heps$}
\qquad \qc \qquad \qquad
\ipic{rect-map}{.15} 
\put(-17,-2){\footnotesize $\psi$} 
\put(-25,34){\scriptsize $H$} \put(-13,34){\scriptsize $\Heps$} 
 \put(-25,-40){\scriptsize $H$}   \put(-10,-40){\scriptsize $H$}
 \quad = \quad 
\ipic{mappsi}{.15}
\put(-48,36){\scriptsize $H$} \put(-19,36){\scriptsize $\Heps$} 
 \put(-37,-42){\scriptsize $H$}   \put(-5,-42){\scriptsize $H$}
\ee
and similarly for $\phi^l$ and $\psi^l$.

\begin{proof}
We begin with the part (a) and first check that
 $\psi$ is left inverse to $\phi$,  we thus compute the composition
\be
\psi\circ \phi \quad = \quad
\ipic{map-psi-phi}{.2}  
 \quad = \quad 
\ipic{map-psi-phi-mult}{.2}  
 \quad = \quad 
\ipic{map-psi-phi-antipode}{.21}  
 \quad = \quad 
\ipic{map-psi-phi-uni-counit}{.21}  
 \quad = \quad 
 \id_{H\tensor \Heps}
 \ee
 where we used  coassociativity of the coproduct in the third equality, and then the antipode axiom.
Since the left and right inverses of a linear endomorphism of a finite-dimensional space are always equal, we also have
 $\phi\circ \psi = \id_{H\tensor H}$.
Then we check that  $\phi$  intertwines the corresponding $H$ actions:\footnote{We show explicitly the source and target labels, $H$ in this case, only on LHS for brevity. }
\be
\ipic{rect-map-mult-left}{.2} 
\put(-20,-2){\footnotesize $\phi$}
\put(-29,34){\tiny $H$} \put(-10,34){\tiny $H$} 
 \put(-49,-40){\tiny $H$} \put(-29,-40){\tiny $H$}   \put(-13,-40){\tiny $\Heps$}
  \quad = \quad 
  \ipic{mapphi-mult-left}{.2} 
   \quad = \quad
     \ipic{mapphi-comult}{.2} 
   \quad = \quad
\ipic{rect-map-comult}{.2} 
\put(-20,-2){\footnotesize $\phi$}
\ee
where we used the property of coproduct being an algebra map and associativity of multiplication. Clearly, the inverse map of an intertwiner is automatically an intertwiner. Therefore, it proves that $\psi$ is an intertwiner as well. 
The part b) is proven in an analogous way. 
\end{proof}

\newcommand{\basis}{B}
From  Theorem~\ref{thm:tensor-powers} we  obtain  two corollaries: the first is about an explicit decomposition of $\UH\tensor \UH$ while the second contains a description of the centraliser algebra of the $\UH$-action on $\UH\tensor \UH$. First, we need a little preparation. Let us fix a basis $\basis$ of $H$, 
it is a finite set. We introduce then two families of intertwining maps:
\begin{align}\label{eq:g-fy-def}
g_y &\colon \UH \to \UH\tensor \Heps\ ,\qquad h\mapsto h\tensor y\ ,\qquad\;\; y\in \basis\ ,\\
f_y &\colon \UH\tensor \Heps \to \UH\ ,\qquad h\tensor u\mapsto \delta_{u,y} h\ ,\quad u,y\in \basis\ ,\nonumber
\end{align}
where $\delta$ is the Kronecker symbol, and the last map we extend linearly to the whole space $\UH\tensor \Heps$. It is clear that $f_{y'}\circ g_y =\delta_{y',y} \id_H$ and $g_y\circ f_y$ is an idempotent for each $y\in \basis$. The intertwining property of $g_y$ and $f_y$ is very straightforward to see. From this and from the isomorphisms established in Theorem~\ref{thm:tensor-powers} we have the following corollary.

\begin{corollary} \label{cor:HH-decomp}
Let $\UH$ be the regular module of a Hopf algebra $\UH$ and $\basis$ be a 
basis of~$\UH$. We have then the decomposition
\be\label{eq:HH-decomp}
\UH\tensor \UH\; \cong\; \bigoplus_{y\in\basis} \UH_y
\ee
where each direct summand $\UH_y$ is the regular $\UH$-module and the corresponding idempotent $e_y$ is given by the composition
\be\label{eq:idemp-y}
e_y = \iota_y \circ \pi_y
\ee
with the monomorphisms
\be
\iota_y \colon \UH \to \UH\tensor \UH\ , \qquad h\mapsto \phi\circ g_y(h) =
h'\tensor h'' y \ , \quad y\in \basis
\ee
and the epimorphisms
\be
\pi_y \colon \UH\tensor \UH \to \UH\ , \qquad h\tensor u\mapsto f_y\circ\psi(h\tensor u) \ , \quad y\in \basis \ ,\ u\in \UH\ .
\ee
In other words, the image of $\iota_y$ is $\UH_y$ in~\eqref{eq:HH-decomp} and $\pi_y$ is identity on $\UH_y$.  
\end{corollary}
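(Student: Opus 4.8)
The plan is to obtain the corollary by transporting a transparent decomposition of $\UH\tensor\Heps$ through the module isomorphism $\phi$ of Theorem~\ref{thm:tensor-powers}(a). First I would observe that the $\UH$-action on $\UH\tensor\Heps$ touches only the left tensor factor: for $h\in\UH$ and $a\tensor m\in\UH\tensor\Heps$ one has $h\cdot(a\tensor m)=h'a\tensor\eps(h'')m=\bigl(\eps(h'')h'\bigr)a\tensor m=ha\tensor m$. Hence $\Heps$ serves as a multiplicity space of dimension $\dim\UH$, and any map acting as the identity on the left factor — in particular $g_y$ and $f_y$ from \eqref{eq:g-fy-def} — is automatically a $\UH$-intertwiner. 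This already makes each image $g_y(\UH)$ a copy of the regular module and gives $\UH\tensor\Heps\cong\bigoplus_{y\in\basis}\UH$.

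Next I would record the relations satisfied by the families $g_y,f_y$. By construction $f_{y'}\circ g_y=\delta_{y',y}\,\id_\UH$, and since $\basis$ is a basis of $\UH$ one checks directly that $\sum_{y\in\basis}g_y\circ f_y=\id_{\UH\tensor\Heps}$ (evaluating on $h\tensor u$ for $u\in\basis$ gives $\sum_y\delta_{u,y}\,h\tensor y=h\tensor u$). Thus $\{g_y\circ f_y\}_{y\in\basis}$ is a complete system of orthogonal idempotents splitting $\UH\tensor\Heps$ into the summands above, each isomorphic to $\UH$ via the mutually inverse pair $g_y,f_y$.

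The heart of the argument is then a purely formal transport of structure. Setting $\iota_y:=\phi\circ g_y$ and $\pi_y:=f_y\circ\psi$, these are respectively a monomorphism and an epimorphism because $\phi,\psi$ are isomorphisms and $g_y,f_y$ are split. Using $\psi\circ\phi=\id$ from Theorem~\ref{thm:tensor-powers}(a) I get $\pi_{y'}\circ\iota_y=f_{y'}\circ\psi\circ\phi\circ g_y=f_{y'}\circ g_y=\delta_{y',y}\,\id_\UH$; in particular $\pi_y\circ\iota_y=\id_\UH$, so $\pi_y$ is the identity on $\UH_y:=\iota_y(\UH)$. Conjugating the idempotents yields $e_y=\iota_y\circ\pi_y=\phi\circ(g_y\circ f_y)\circ\psi$, again a complete orthogonal system on $\UH\tensor\UH$ summing to the identity, which gives the decomposition \eqref{eq:HH-decomp} with each summand a regular module. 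Finally the explicit formula for $\iota_y$ follows by unwinding \eqref{eq:phi-def}: $\iota_y(h)=\phi(h\tensor y)=h'\tensor h''y$.

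I do not expect a genuine obstacle here, since all the real content sits in Theorem~\ref{thm:tensor-powers} and the corollary is bookkeeping once the orthogonality $f_{y'}\circ g_y=\delta_{y',y}\,\id$ and completeness $\sum_y g_y\circ f_y=\id$ are established. The only point demanding a little care is verifying that $g_y,f_y$ are intertwiners, which reduces to the observation above that the $\UH$-module structure on $\UH\tensor\Heps$ is concentrated in the left factor.
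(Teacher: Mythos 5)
Your proposal is correct and takes essentially the same route as the paper: both arguments transport the evident orthogonal decomposition of $\UH\tensor\Heps$ given by the pairs $g_y, f_y$ through the isomorphism $\phi$ of Theorem~\ref{thm:tensor-powers}(a), obtaining $\iota_y=\phi\circ g_y$, $\pi_y=f_y\circ\psi$ and the idempotents $e_y=\phi\circ(g_y\circ f_y)\circ\psi$. Your explicit checks — that the $\UH$-action on $\UH\tensor\Heps$ sits entirely in the left factor (hence $g_y,f_y$ intertwine) and that $\sum_{y\in\basis}g_y\circ f_y=\id$ — merely spell out details the paper declares straightforward.
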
 
\begin{proof}
The direct sum decomposition~\eqref{eq:HH-decomp}  clearly follows from Theorem~\ref{thm:tensor-powers} where the corresponding isomorphisms  $\phi$ and $\psi=\phi^{-1}$ are given.
That $\iota_y$ is an intertwiner is clear from the definition $\iota_y:= \phi\circ g_y$ as the composition of two intertwining maps. And the same applies to $\pi_y$. 
The idempotent property of $e_y = \phi \circ g_y \circ f_y \circ \phi^{-1}$ follows from  that of $g_y \circ f_y$.
The image of $e_y$ is $\UH_y\subset \UH\tensor\UH$ and $e_y$ is identity on $\UH_x$ if and only if $x=y$ for $x,y\in \basis$. This finishes the proof.
\end{proof}

From~\eqref{eq:idemp-y}, we also note the equalities
\be
e_y e_{x} = \delta_{y,x} e_y\ ,\quad  x,y\in \basis
\ee
and 
\be\label{eq:ey}
\sum_{y\in\basis} e_y = \id_{\UH\otimes \UH}\ .
\ee

Before formulating the second corollary of Theorem~\ref{thm:tensor-powers}, we recall 
that for any $\ok$-algebra~$A$ there is a natural isomorphism
$\Mat_{n,n}(A)\cong A \otimes \Mat_{n,n}(\ok)$,
where $\Mat_{n,n}$ is the $n\times n$ matrix algebra.
\newcommand{\f}{\boldsymbol{f}}
\begin{corollary}\label{cor:EndHH-Mat}
For  any $n$-dimensional Hopf algebra $H$, there is an algebra isomorphism 
\be\label{eq:EndHH-Mat}
\End_\UH(\UH\tensor \UH) \cong \Mat_{n,n}(\UH^{\op})\ .
\ee 
Hence, the algebra $\End_\UH(\UH\tensor \UH)$ is linearly generated by elements
$\f$ parametrised by  triples $(h,v,\gamma)$, for $h,v\in\UH$ and  $\gamma\in\UH^*$, where
\be\label{eq:tf-hvg}
\f(h,v,\gamma):= \phi \circ f(h,v,\gamma) \circ \psi \, \colon \;\; \UH\tensor \UH \to \UH\tensor \UH
\ee
with
\be\label{eq:f-hvg}
f(h,v,\gamma)\colon\, x\tensor y \mapsto \gamma(y)\cdot (xh)\tensor v \ , \qquad x\in\UH, \ y\in\Heps\gp
\ee
Their product is the composition with
\be\label{eq:ff-mult}
f(h_1,v_1,\gamma_1)\circ f(h_2,v_2,\gamma_2) = \gamma_1(v_2) f(h_2 h_1,v_1, \gamma_2) \gp
\ee
\end{corollary}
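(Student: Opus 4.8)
The plan is to establish the algebra isomorphism \eqref{eq:EndHH-Mat} by exploiting the direct-sum decomposition of Corollary~\ref{cor:HH-decomp}, and then to read off the explicit parametrisation \eqref{eq:tf-hvg}--\eqref{eq:f-hvg} from it. Since $\UH\tensor\UH$ decomposes as $n$ copies of the regular module $\UH$ (one copy $\UH_y$ for each element $y$ of a basis $\basis$, with $|\basis|=n$), a general theorem on endomorphism algebras of direct sums of copies of a single module applies: for a module $M$ and $M^{\oplus n}$ we have $\End(M^{\oplus n})\cong\Mat_{n,n}(\End(M))$, with matrix entries given by compositions through the inclusions and projections. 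Here $M=\UH$ is the regular module, and by Lemma~\ref{lem:Uop-End} we already know $\End_\UH(\UH)\cong\UH^{\op}$ via $r\colon x\mapsto r_x$. Combining these two facts immediately yields $\End_\UH(\UH\tensor\UH)\cong\Mat_{n,n}(\UH^{\op})$.

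First I would make the isomorphism concrete. Using the intertwiners $\iota_y,\pi_y$ from Corollary~\ref{cor:HH-decomp} (equivalently, transporting everything through $\phi$ and $\psi$), any $F\in\End_\UH(\UH\tensor\UH)$ is equivalent under conjugation by $\psi,\phi$ to an endomorphism of $\UH\tensor\Heps\cong\bigoplus_{y\in\basis}\UH$. Its matrix entries are intertwiners $\UH\to\UH$, each of which is right multiplication by some element of $\UH$ by Lemma~\ref{lem:Uop-End}; the $\Heps$-factor merely records the source/target copy of the regular module via the maps $g_y,f_y$ in \eqref{eq:g-fy-def}. This is precisely what \eqref{eq:f-hvg} encodes: $f(h,v,\gamma)$ sends $x\tensor y\mapsto\gamma(y)(xh)\tensor v$, where right multiplication by $h$ gives the $\End_\UH(\UH)\cong\UH^{\op}$ entry, while $\gamma\in\UH^*$ (read off $y$) together with $v$ (the output copy) encode the matrix position in $\Mat_{n,n}(\ok)$. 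Evaluating $\gamma$ and $v$ on basis elements of $\basis$ recovers genuine $n\times n$ matrices with entries in $\UH^{\op}$, under the natural identification $\Mat_{n,n}(\UH^{\op})\cong\UH^{\op}\otimes\Mat_{n,n}(\ok)$ recalled just before the statement.

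Finally I would verify the multiplication rule \eqref{eq:ff-mult}. Since conjugation by the fixed isomorphisms $\phi,\psi$ is an algebra homomorphism, it suffices to check $f(h_1,v_1,\gamma_1)\circ f(h_2,v_2,\gamma_2)=\gamma_1(v_2)\,f(h_2h_1,v_1,\gamma_2)$ on a generic element $x\tensor y$ of $\UH\tensor\Heps$. This is a short direct computation: applying $f(h_2,v_2,\gamma_2)$ gives $\gamma_2(y)\,(xh_2)\tensor v_2$, and then applying $f(h_1,v_1,\gamma_1)$ contracts the $\Heps$-slot $v_2$ against $\gamma_1$ (producing the scalar $\gamma_1(v_2)$) and right-multiplies the $\UH$-slot by $h_1$, yielding $\gamma_1(v_2)\gamma_2(y)\,(xh_2h_1)\tensor v_1$, which equals $\gamma_1(v_2)\,f(h_2h_1,v_1,\gamma_2)$ evaluated at $x\tensor y$. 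The appearance of the product $h_2h_1$ rather than $h_1h_2$ reflects that $\End_\UH(\UH)\cong\UH^{\op}$ carries the opposite multiplication, consistent with the $\UH^{\op}$ in \eqref{eq:EndHH-Mat}.

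The main obstacle I anticipate is purely bookkeeping rather than conceptual: one must track carefully how the $\Heps$-factor (the trivial-action copy of $\UH$) plays the role of the matrix indices while the genuinely module-theoretic content lives in the $\UH$-factor, and ensure the transported maps $\phi\circ(-)\circ\psi$ really do land in $\End_\UH(\UH\tensor\UH)$ as intertwiners. Because $\phi$ and $\psi$ are the mutually inverse \emph{intertwiners} established in Theorem~\ref{thm:tensor-powers}, this transport is automatic, so no genuine difficulty arises; the only care needed is getting the order of multiplication and the placement of $\gamma$ versus $v$ correct so that the composition formula \eqref{eq:ff-mult} comes out with the right opposite-algebra convention.
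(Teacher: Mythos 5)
Your proposal is correct and takes essentially the same approach as the paper: both arguments conjugate by the intertwiners $\phi,\psi$ of Theorem~\ref{thm:tensor-powers} to reduce to endomorphisms of $\UH\tensor\Heps$, identify the module-theoretic content with $\UH^{\op}$ via Lemma~\ref{lem:Uop-End}, and verify the composition rule~\eqref{eq:ff-mult} by the same direct computation on $x\tensor y$, with the opposite multiplication appearing for the same reason. The only cosmetic difference is that you package the multiplicity bookkeeping through the direct-sum decomposition of Corollary~\ref{cor:HH-decomp} together with the standard fact $\End(M^{\oplus n})\cong\Mat_{n,n}(\End(M))$, whereas the paper realises the multiplicity space as $M\tensor_\ok M^*$ using evaluation and coevaluation maps and constructs the isomorphism $\Phi$ and its inverse explicitly.
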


Here is  the graphical presentation of the maps 
$f(h,v,\gamma)$ and $\f(h,v,\gamma)$:
\be\label{eq:ffhvg-map}
\ipic{rect-map-wide}{.17} 
\put(-38,-2){\scriptsize $f(h,v,\gamma)$}
\put(-33,38){\scriptsize $H$} \put(-15,38){\scriptsize $\Heps$} 
 \put(-34,-44){\scriptsize $H$}   \put(-18,-44){\scriptsize $\Heps$}
  \quad = \quad 
\ipic{fhvg-map}{.15} 
\put(-24,-25){\scriptsize $h$} \put(-9.5,-8){\scriptsize $\gamma$}
 \put(-9.5,6){\scriptsize $v$}
\quad \gc \qquad
\ipic{rect-map-wide}{.17} 
\put(-38,-2){\scriptsize $\f(h,v,\gamma)$}
\put(-33,38){\scriptsize $H$} \put(-13,38){\scriptsize $H$} 
 \put(-34,-44){\scriptsize $H$}   \put(-15,-44){\scriptsize $H$}
  \quad = \quad 
  \ipic{ffhvg-map}{.15} 
\put(-25,-8){\scriptsize $h$} \put(-13.5,-2){\scriptsize $\gamma$}
 \put(-9.5,14){\scriptsize $v$}
  \quad \gp
\ee

\begin{proof}
We first recall the decomposition~\eqref{eq:HH-decomp} where the multiplicity space is the vector space underlying $H$. We will denote it $M:=H$ in order to distinguish from the regular module $H$. 
We have then  isomorphisms\footnote{Using $\tensor_{\ok}$ we distinguish the tensor product of vector spaces from the one for $H$-modules.}
\be\label{eq:HH-MM}
\End_\UH(\UH\tensor \UH) \cong \End_\UH(\UH\tensor_{\ok} M) \cong \Hom_\UH(\UH\tensor_{\ok} M\tensor_{\ok} M^*, \UH)\ ,
\ee
where in the last isomorphism we used the duality maps $\ev_M$ and $\coev_M$. We note that RHS of~\eqref{eq:HH-MM} is obviously isomorphic to $ \End_\UH(\UH)\tensor_{\ok} \Mat_{n,n}(\ok)$ with $n=\dim \UH$.
Then by  Lemma~\ref{lem:Uop-End}  we get an isomorphism of vector spaces in~\eqref{eq:EndHH-Mat}.
Let us describe this isomorphism explicitly. First, we construct the isomorphism
\begin{align}
\Phi\colon\,&  \UH^{\op} \tensor\big(M\tensor_\ok M^*\big) \xrightarrow{\sim} \End_\UH(\UH\tensor_\ok M)\ ,\\
&h\tensor v\tensor \gamma \mapsto  f(h,v,\gamma)
\end{align}
with $f(h,v,\gamma)$ from~\eqref{eq:f-hvg}. It is straightforward to check that $f(h,v,\gamma)$ is an intertwiner.  
The inverse to the map $\Phi$ is defined as follows. 
Elements in $\End_\UH(\UH\tensor_{\ok} M)$ are of the form
\be
g = r_h\tensor s \colon \,  x\tensor y \mapsto  xh \tensor s(y) \ , 
\ee
where $s\in\End_\ok(M)$ and we used that $g$ has to intertwine the regular $H$-action and that by Lemma~\ref{lem:Uop-End}  such intertwiner is given by  right multiplication $r_h$ with an element $h\in\UH$. Recall the isomorphism $M\tensor_\ok M^*\xrightarrow{\sim} \End_\ok(M)$ that sends $v\tensor \gamma$ to the operator $\gamma(-) v$. Then it is straightforward to check that
$\Phi^{-1}: g \mapsto h\tensor\sum_{v,u\in\basis}s_{vu} v\tensor u^*$, where $(s_{vu})_{v,u\in\basis}$ is the matrix of the linear map~$s$. 
Finally, conjugating the image of $\Phi$ by $\phi$, i.e.\ sending $h\tensor v\tensor \gamma$ to $\f := \phi\circ f(h,v,\gamma)\circ \phi^{-1}$,  gives explicitly the isomorphism~\eqref{eq:EndHH-Mat}.

We show next that the map $\Phi$ is also an algebra map. The multiplication on $M\tensor_\ok M^*$ is 
\be
\big(M\tensor_\ok M^*\big) \tensor \big(M\tensor_\ok M^*\big)  \xrightarrow{\id\tensor \ev_M\tensor \id} M\tensor_\ok M^*
\ee
or explicitly (which is the standard matrix multiplicaition in $\Mat_{n,n}(\ok)$)
\be\label{eq:vgamma-mult}
(v_1\tensor \gamma_1)\cdot (v_2\tensor \gamma_2) =  \gamma_1 (v_2) v_1\tensor \gamma_2\ .
\ee
The source of $\Phi$ is then the product of two algebras $\UH^{\op}$ and $M\tensor_\ok M^*$. In the image space of $\Phi$, the multiplication is given by the composition~\eqref{eq:ff-mult}, as follows from the definition of $f(h,v,\gamma)$. Then using~\eqref{eq:vgamma-mult} it is easy to see that multiplication in $\UH^{\op}\tensor(M\tensor_\ok M^*)$ agrees with the one in  $\End_\UH(\UH\tensor_\ok M)$. By conjugating with $\phi$, the latter algebra is isomorphic to  $\End_\UH(\UH\tensor H)$. This finishes our proof.
\end{proof}

We finally note that the product $\UH\tensor W$ for any $W\in \Hmod$ can be studied similarly to $H\otimes H$. 
 Let us denote by 
$\Weps$ the vector space underlying $W$  equipped with the trivial action of $\UH$.
Then the generalisation of Theorem~\ref{thm:tensor-powers} is straightforward.

\begin{theorem}\label{thm:tensor-powers-W}
Let  $H$ be a finite-dimensional Hopf algebra $H$ and $W\in\Hmod$. We then have 
the isomorphisms of $\UH$-modules
\be\label{eq:phiW}
\phi_W\colon \UH \otimes \Weps\rightarrow \UH\otimes W\ ,\qquad
\phi^{-1}_W\colon \UH \otimes W\rightarrow \UH\otimes \Weps
\ee
which are given graphically as
\be\label{eq:phiW-def}
\ipic{rect-map}{.15} 
\put(-20,-2){\footnotesize $\phi_W$}
\put(-25,34){\scriptsize $H$} \put(-10,34){\scriptsize $W$} 
 \put(-25,-40){\scriptsize $H$}   \put(-13,-40){\scriptsize $\Weps$}
  \quad = \quad 
\ipic{mapphiW}{.17} 
\put(-36,32){\scriptsize $H$} \put(-4,32){\scriptsize $W$} 
 \put(-28,-38){\scriptsize $H$}   \put(-8,-38){\scriptsize $\Weps$}
\qquad \qc \qquad 
\ipic{rect-map}{.15} 
\put(-20,-3){\footnotesize $\phi^{-1}_W$} 
\put(-25,34){\scriptsize $H$} \put(-13,34){\scriptsize $\Weps$} 
 \put(-25,-40){\scriptsize $H$}   \put(-10,-40){\scriptsize $W$}
 \quad = \quad 
\ipic{mappsiW}{.17}
\put(-36,32){\scriptsize $H$} \put(-7,32){\scriptsize $\Weps$} 
 \put(-28,-38){\scriptsize $H$}   \put(-5,-38){\scriptsize $W$} \;,
\ee
where the arrow denotes the $\UH$-action on $W$. 
In particular, we have an algebra isomorphism
\be\label{eq:EndHW-Mat}
\UH^{\op}\tensor\Mat_{m,m}(\ok) \xrightarrow{\,\sim \,} \End_\UH(\UH\tensor W)  \ ,\qquad m=\dim(W)\ ,
\ee 
which sends  $h\otimes A$ to the intertwining map
\be\label{eq:EndHW-Mat-map}
x\otimes w\mapsto (x'h)'\otimes (x'h)''m_A\bigl(S(x'')w\bigr)\ , \qquad x\in \UH, \; w\in W\ ,
\ee
and $m_A$ here is the  operator, $m_A\in\End_\ok(\Weps)$, corresponding to the matrix $A$.
\end{theorem}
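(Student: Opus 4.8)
The plan is to reduce everything to Theorem~\ref{thm:tensor-powers} by exploiting the observation that its proof never touched the coalgebra or algebra structure of the \emph{second} tensor factor. Concretely, I would take $\phi_W$ and $\phi_W^{-1}$ to be given by the same Sweedler formulas as $\phi$ and $\psi$, namely $\phi_W(h\tensor w)=h'\tensor h''w$ and $\phi_W^{-1}(x\tensor w)=x'\tensor S(x'')w$, where now $h''w$ and $S(x'')w$ denote the $H$-module action on $W$ rather than multiplication in the regular module. The graphical presentations in~\eqref{eq:phiW-def} are then exactly those of $\phi,\psi$ with the rightmost strand relabelled by $W$ and the rightmost trivalent vertex reinterpreted as the action of $H$ on $W$.

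First I would verify that $\phi_W$ and $\phi_W^{-1}$ are mutually inverse intertwiners by repeating verbatim the diagrammatic computations from the proof of Theorem~\ref{thm:tensor-powers}(a). The key point is that those computations used only (i) coassociativity of $\Delta$ and the fact that $\Delta$ is an algebra map, both applied to the first factor; (ii) the antipode, unit and counit axioms of $H$; and (iii) the left action on the second factor. Wherever the earlier proof invoked associativity of multiplication on the second copy of $H$, the same step now reads as the module-action axiom $(ab)w=a(bw)$, which holds for every $W\in\Hmod$. Hence no essential change is needed and $\phi_W$ is an isomorphism of $H$-modules.

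For the algebra isomorphism~\eqref{eq:EndHW-Mat} I would argue as in Corollary~\ref{cor:EndHH-Mat}. Since $\Weps$ is the trivial module on an $m$-dimensional space, a choice of basis of $W$ identifies $\UH\tensor\Weps$ with $\UH^{\oplus m}$ as $H$-modules, so by Lemma~\ref{lem:Uop-End} one obtains $\End_\UH(\UH\tensor\Weps)\cong\Mat_{m,m}(\End_\UH(\UH))\cong\Mat_{m,m}(\UH^{\op})\cong\UH^{\op}\tensor\Mat_{m,m}(\ok)$. Transporting this identification along the isomorphism $\phi_W$ yields the claimed isomorphism onto $\End_\UH(\UH\tensor W)$. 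The explicit formula~\eqref{eq:EndHW-Mat-map} is then just the conjugate $\phi_W\circ(r_h\tensor m_A)\circ\phi_W^{-1}$: applying $\phi_W^{-1}$, then $r_h\tensor m_A$ (right multiplication by $h$ on the first factor and $m_A$ on the multiplicity space), and finally $\phi_W$, sends $x\tensor w$ to $(x'h)'\tensor(x'h)''m_A\bigl(S(x'')w\bigr)$.

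I expect no serious obstacle here: the entire content is that the proof of Theorem~\ref{thm:tensor-powers} is insensitive to replacing the regular module in the second slot by an arbitrary $W$. The only point requiring genuine care is to double-check that nowhere in that earlier argument is the coproduct (or the antipode as a coalgebra map) applied to the second tensor factor; once this is confirmed, the generalisation is automatic.
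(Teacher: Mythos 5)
Your proposal is correct and coincides with the paper's own proof, which states that the argument \emph{literally repeats} part (a) of Theorem~\ref{thm:tensor-powers} and Corollary~\ref{cor:EndHH-Mat}, with $\Heps$ replaced by $\Weps$ and multiplication on the second factor replaced by the $\UH$-action on $W$, the formula~\eqref{eq:EndHW-Mat-map} arising exactly as your conjugate $\phi_W\circ(r_h\tensor m_A)\circ\phi_W^{-1}$. Your extra care in checking that the earlier diagrammatic computations never apply the coproduct or antipode to the second tensor factor, and that associativity of multiplication there is only used in the form of the module axiom $(ab)w=a(bw)$, is precisely the observation that makes the paper's terse ``repeats verbatim'' justified.
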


\section{Proof of Theorem~\ref{thm:main} }\label{sec:proof}
We have now all the necessary ingredients to prove our main theorem. 
We start with a reformulation of 
 Reduction Lemma~\ref{lem:tHH-tH} adapted
to our current setting.
\begin{corollary}\label{cor:tHH-tH}
Given a unimodular pivotal Hopf algebra $(\UH,\pivot)$,
a symmetric linear function $\t\in H^*$ extends uniquely to a right modified trace on $\Hpmod$   if and only if
for all $f\in \End_{\UH}(\UH\otimes \UH)$ 
\be\label{eq:tHH-tH}
\t_{\UH\otimes \UH}\left(f \right)=
\t_\UH
 \bigl( \tr^r_\UH(f)\bigr) \gp
\ee
 Analogously, $\t$ extends  to a left modified trace on $\Hpmod$   if and only if 
\be\label{eq:tHH-tH-left}
 \t_{\UH\otimes \UH}\left(f \right)=
\t_{\UH} \bigl( \tr^l_{\UH}(f)\bigr) \gc   \qquad  \text{for all}\quad f\in \End_{\UH}(\UH\otimes \UH)\gp
\ee
\end{corollary}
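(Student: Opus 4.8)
The plan is to deduce this corollary directly from the Reduction Lemma~\ref{lem:tHH-tH} by specialising the general finite pivotal category $\cat$ there to $\Hmod$ and taking the projective generator $G$ to be the left regular representation $H$. Before invoking the lemma I would record the two standing facts that make $\Hmod$ an instance of its hypotheses: first, that $\Hmod$ is a finite pivotal category, which follows from $\Hmod$ being a finite tensor category together with the pivotal structure built from the pivot $\pivot$ as in~\eqref{eq:pivot-Hmod}; and second, that the regular module $H$ is a projective generator of $\Hmod$, which is standard for a finite-dimensional Hopf algebra. Neither of these uses unimodularity.

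Next I would match the algebraic data of the two statements. By Lemma~\ref{lem:Uop-End} the endomorphism algebra $A := \End_H(H)$ is isomorphic to $H^{\op}$ via right multiplication, and, as observed just after that lemma, a symmetric linear form on $H^{\op}$ is the same datum as a symmetric linear form $\t \in H^*$, because the order of multiplication is irrelevant in the argument of a symmetric form. Hence the hypothesis of Lemma~\ref{lem:tHH-tH} — a symmetric $t \in A^*$ — is precisely a symmetric $\t \in H^*$ in the present setting. With $G = H$ the object $G \otimes G$ becomes $H \otimes H$, so the reduction criterion~\eqref{eq:tGG-tG} reads verbatim as~\eqref{eq:tHH-tH}, and its left-handed counterpart~\eqref{eq:tGG-tG-left} reads as~\eqref{eq:tHH-tH-left}. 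The corollary then follows by quoting the two equivalences of the Reduction Lemma.

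I do not expect any genuine obstacle here: every substantive step — extending a symmetric form to a family of cyclic traces, reducing the partial-trace property to the generator, and passing from a pair of projective objects to an arbitrary tensor factor $V\in\cat$ — has already been carried out inside the proof of Lemma~\ref{lem:tHH-tH}. The only point requiring a little care is the bookkeeping that identifies $G = H$ and matches the symmetric form on $\End_H(H)$ with $\t \in H^*$ through Lemma~\ref{lem:Uop-End}. The unimodularity assumption in the corollary plays no role in this reduction; it is carried along merely because it is the standing hypothesis under which such a symmetric $\t$ is later produced from the integral in Proposition~\ref{lemma:mug-sym}.
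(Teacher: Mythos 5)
Your proposal is correct and matches the paper's own treatment: the paper presents Corollary~\ref{cor:tHH-tH} precisely as the specialisation of Reduction Lemma~\ref{lem:tHH-tH} to $\cat=\Hmod$ with projective generator $G=H$, using Lemma~\ref{lem:Uop-End} to identify symmetric forms on $\End_H(H)\cong H^{\op}$ with symmetric forms $\t\in H^*$. Your observation that unimodularity plays no role in the reduction itself is also consistent with the paper, where that hypothesis only enters later when the symmetric form is produced from the integral.
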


Corollary \ref{cor:tHH-tH} allows us to  restrict
the analysis 
to the regular module and its tensor powers,
and therefore we can use the results of the previous section.

The  proof of Theorem~\ref{thm:main} is divided into three steps.

\vspace*{3mm}
\textbf{Step 1: $\rintg$ provides   right modified trace.}\;
We first show that the symmetrised right integral $\rintg$ provides the right modified trace.
By Proposition~\ref{lemma:mug-sym} 
and by the assumption that $H$  is unimodular,
$\rintg$ is a symmetric form on $H$, and it thus defines a family of trace functions that we will denote by $\t$, recall Proposition~\ref{cor:ext}, in particular  $\t_H (f)=\rint_\pivot(f(\one))$. 

In order to see that this family is a right modified trace,
by Corollary~\ref{cor:tHH-tH} it is enough to check the equalty
$\t_{\UH\otimes \UH}\left(f \right)=\t_\UH \left( \tr^r_\UH(f)\right)$ 
for any $f\in \End_H(H)$.
Let us rewrite LHS of the last equation as
\be\label{eq:tHH=tH}
\t_{\UH\otimes \UH}\left(f \right) = \sum_{y\in \basis}\t_{\UH\otimes \UH}\left(f \circ e_y \right) =  \sum_{y\in \basis}\t_{\UH}\left(\pi_y \circ f \circ \iota_y \right) \ ,
\ee
 where $\basis$ is a basis in $H$. Here,
we first  inserted the identity~\eqref{eq:ey}, then used Corollary~\ref{cor:HH-decomp} and  cyclicity
of $\t_H$.
Therefore, the equation we have to check is 
\be\label{eq:mug-ffhvg}
  \sum_{y\in \basis}\rintg\big(\pi_y \circ \f(h,v,\gamma) \circ \iota_y (\one)\big) = \rintg \Bigl( \tr^r_\UH\bigl(\f(h,v,\gamma)\bigr) (\one)\Bigr) \ , \quad h\in \UH,\; v\in\basis,\; \gamma\in\UH^*.
\ee
Recall that by Corollary~\ref{cor:EndHH-Mat} any element $f\in \End_\UH(\UH\tensor \UH)$ is of the form $\f(h,v,\gamma)$ defined in~\eqref{eq:tf-hvg}.
From Corollary~\ref{cor:HH-decomp}, we have that $\iota_y= \phi\circ g_y(h)$, $\pi_y =f_y\circ\psi$, $\psi=\phi^{-1}$ and 
\[
\text{LHS of}\; \eqref{eq:mug-ffhvg}  = 
 \sum_{y\in \basis}\rintg\big(f_y \circ f(h,v,\gamma) \circ g_y (\one)\big) = 
  \sum_{y\in \basis}\gamma(y) \rintg\big(f_y (h\otimes v) \big) = \gamma(v) \rintg(h)\ ,
\]
where we also used~\eqref{eq:g-fy-def}.
It remains to compute the RHS of~\eqref{eq:mug-ffhvg}.
Using the graphical expression for $\f(h,v,\gamma)$ in~\eqref{eq:ffhvg-map}, we get\footnote{We emphasize here by $\vect_\ok$ in the box that the diagrams, as maps from $\ok$ to $\ok$, are morphisms in $\vect_\ok$, so in particular evaluation and coevaluation maps are those from $\vect_\ok$ (the evaluation map in $\rep \UH$ was already resolved by using the pivotal element $\pivot$).} 
\be\label{eq:comp}
\text{RHS of}\; \eqref{eq:mug-ffhvg} \;=\;
\ipic{mug-right-trace-g}{.17} 
\put(-54,-5){\scriptsize $\f(h,v,\gamma)$}
\put(-50.5,36){\scriptsize $\rintg$} 
\put(-37,10){\scriptsize $\pivot$} 
\raisebox{4.5em}{\framebox{\scriptsize{$\vect_\ok$}}}
  \; = \; 
\ipic{mug-right-trace-2-g}{.17} 
\put(-61.5,57){\scriptsize $\rintg$} 
\put(-42,-18){\scriptsize $h$} \put(-30.5,-5){\scriptsize $\gamma$}
 \put(-27,15.5){\scriptsize $v$}
 \put(-42,41){\scriptsize $\pivot$} 
  \; = \; 
  \ipic{mug-right-trace-3-g}{.17} 
\put(-61.5,17){\scriptsize $\rintg$} 
\put(-52,-42){\scriptsize $h$} \put(-20.5,5.5){\scriptsize $\gamma$}
 \put(-28,-34){\scriptsize $v$}
 \put(-52,-12.5){\scriptsize $\pivot$} 
   \; = \; 
  \ipic{mug-right-trace-4}{.17} 
\put(-42.5,27){\scriptsize $\rintg$} 
\put(-41,-21){\scriptsize $h$} \put(-16.5,24){\scriptsize $\gamma$}
 \put(-8,-31){\scriptsize $v$}
\gc
\ee
where for the first equality we use the definition of the partial trace in~\eqref{E:PartialLRtrace} and formulas~\eqref{E:DualitiesC}-\eqref{E:DualitiesC-right} for the  left  coevaluation $\coev_\UH$  and the right evaluation $\tilde\ev_\UH$  maps; in the second equality we substitute the explicit expression ~\eqref{eq:ffhvg-map} for  $\f(h,v,\gamma)$; the third equality is obvious; then in the fourth equality we  replace the part of the diagram inside the dashed rectangle by the (defining) relation~\eqref{eq:mug-def-rel} for the symmetrised integral $\rintg$ which is diagrammatically written as
\be\label{eq:sym-int}
  \ipic{mug-def-rel-1}{.19}
   \put(-6,33){\scriptsize $\UH$} 
  \put(-29,20){\scriptsize $\rintg$} 
   \put(-15,-2.5){\scriptsize $\pivot$} 
     \put(-17,-40){\scriptsize $\UH$} 
   \quad = \quad 
  \ipic{mug-def-rel-2}{.19}
   \put(-11,33){\scriptsize $\UH$} 
  \put(-13,-9){\scriptsize $\rintg$} \quad .
     \put(-28,-40){\scriptsize $\UH$} 
\ee
We finally see that RHS of~\eqref{eq:mug-ffhvg} also equals  $\gamma(v) \rintg(h)$, as we got for LHS of~\eqref{eq:mug-ffhvg}. Therefore the equality~\eqref{eq:mug-ffhvg} is true indeed for all $h\in \UH$, $v\in\basis$, and $\gamma\in\UH^*$ and thus for all endomorphisms of $\UH\otimes \UH$. This proves that the symmetric form $\rintg$ satisfies the right partial trace condition,
 and thus provides a right modified  trace for the ideal of projective $H$-modules.

\newcommand{\tf}{\tilde\t}

\vskip-5mm
\mbox{}\\
\textbf{Step 2: Right modified trace is  symmetrised integral.}\;
We now turn to the proof for the opposite direction.
Assume we have a right modified trace, and hence
 the symmetric form $\t_P$ on $\End_H P$ for any projective $P$, in particular the symmetric forms on $\End_H H $ and $\End_H (H\otimes H)$.
  They  satisfy
$\t_{\UH\otimes \UH}\left(f \right)=\t_\UH \left( \tr^r_\UH(f)\right)$, or equivalently 
\be\label{eq:t-ffhvg}
  \sum_{y\in \basis} \t_\UH \big(\pi_y \circ \f(h,v,\gamma) \circ \iota_y \big) = \t_\UH \Bigl( \tr^r_\UH\bigl(\f(h,v,\gamma)\bigr) \Bigr) \ ,
\ee
for  all $h\in \UH$, $v\in\basis$, $\gamma\in\UH^*$.
By the same arguments as in Step 1, we get $ \gamma(v) \t_\UH(r_h)$ for LHS of~\eqref{eq:t-ffhvg}, where $r_h$ is the right multiplication with $h$, which we can rewrite 
 \be\label{eq:LHS-t-ffhvg}
\text{LHS of}\; \eqref{eq:t-ffhvg} \; = \;   \gamma(v) \t(h)\qquad \text{where} \qquad \t(h):= \t_\UH(r_h) \ 
 \ee 
is the image of $\t_\UH$ under the isomorphism in Lemma~\ref{lem:Uop-End}, i.e.\ $\t$ is 
a symmetric form on $\UH$. 
We will further work  with $\t$ only.

Repeating now calculation in~\eqref{eq:comp} for the symmetric form $\t$,  RHS of~\eqref{eq:t-ffhvg} takes the form:
\be\label{eq:RHS-t-ffhvg}
\text{RHS of}\; \eqref{eq:t-ffhvg} \; = \; 
  \ipic{right-trace-tf-RHS}{.18} 
\put(-41,23){\scriptsize $\t$} 
\put(-32,-38){\scriptsize $h$} \put(-17.5,25){\scriptsize $\gamma$}
 \put(-8,-30){\scriptsize $v$}
 \put(-34.5,-7){\scriptsize $\pivot$} \quad ,
\ee
where  we used the relation $\t_\UH(f)= \t\bigl(f(\one)\bigr)$. 
Combining results~\eqref{eq:LHS-t-ffhvg} and~\eqref{eq:RHS-t-ffhvg} for the both sides and setting $v=\one$, we get for any $\gamma\in \UH^*$ and $h\in \UH$ the equality
\be
  \ipic{right-trace-tf-v1-RHS}{.18} 
\put(-46,-8){\scriptsize $\t$} 
\put(-46,-34){\scriptsize $h$} 
 \put(-9,-21){\scriptsize $\gamma$}
 \; = \; 
 \ipic{right-trace-tf-v1}{.18} 
\put(-46,25){\scriptsize $\t$} 
\put(-39,-36){\scriptsize $h$} 
 \put(-9,-21){\scriptsize $\gamma$}
 \put(-39,2){\scriptsize $\pivot$} 
\quad .
\ee
As it is true for all $\gamma\in\UH^*$ we get the corresponding equality for the arguments of $\gamma$ -- the part of the diagram inside the dashed rectangles --
and this agrees with \eqref{eq:sym-int}. In other words, $\t$  satisfies the defining relation for the symmetrised right integral, i.e.
\be
(\t\tensor\pivot)\Delta(h)=\t(h)\one \ , \qquad h\in \UH\ 
.\ee

We thus conclude that $\t$, or equivalently the right modified trace $\t_\UH$, is a symmetrised right integral. As the latter is non-zero and unique up to a scalar,  
and the right modified trace on $\Hmod$ is determined by its value on $\UH$ by  Corollary \ref{cor:tHH-tH},
we  conclude that a non-zero right modified trace on $\Hpmod$ exists (under the assumptions of  Theorem~\ref{thm:main}) and is  \textsl{unique} up to  scalar.

\vskip-5mm
\mbox{}\\
\textbf{Step 3: Non-degeneracy, left and balanced cases.} \;
By Proposition~\ref{lemma:mug-sym} and
Theorem~\ref{Thm:nondegeneracy} the right modified trace
defined by $\rintg$ is non-degenerate. 
This finishes the proof of Theorem~\ref{thm:main} in the right case.

The proof for the left modified trace is completely analogous to the previous one. For example, to
 show that  the left symmetrised integral provides the left modified trace,
  it is enough to check
  the left partial trace property  $\t_{\UH\otimes \UH}\left(f \right)=\t_\UH \left( \tr^l_\UH(f)\right)$ for $\lintg$ which is
\be\label{eq:mug-ffhvg-left}
  \sum_{y\in \basis}\lintg\big(\pi_y \circ \f(h,v,\gamma) \circ \iota_y (\one)\big) = \lintg \Bigl( \tr^l_\UH\bigl(\f(h,v,\gamma)\bigr) (\one)\Bigr) \ ,
\ee
for all $ h\in \UH$, $v\in\basis$ and $\gamma\in\UH^*$.
Computations similar to those in \eqref{eq:comp} reduce this equality to
\eqref{left}, i.e.
$$
(\pivot^{-1}\tensor\lintg)\Delta(x)=\lintg(x)\one\gc 
$$
which is the defining relation for
the symmetrised left integral. 

Clearly, whenever $\UH$ is unibalanced,  left and right symmetrised integrals can be properly normalised such that they agree, e.g.\ by choosing $\lint=\rint\circ S$.
Therefore, the corresponding
left and right modified traces agree too.

This  finishes the proof of Theorem~\ref{thm:main}.
\section{Quantum Groups of types $ADE$}\label{qgroups}
In this section we  study finite-dimensional
quantum groups at roots of unity as defined in \cite[Sec.\,5]{Lusztig}\footnote{We use the opposite coproduct compared to
the one in \cite{Lusztig}.}
in the simply laced case.  We compute their right and left integrals and cointegrals, check that they are unibalanced and give a formula for the modified trace on the regular representation.
Here, the quantum parameter $q\in \kk$ is a root of $1$, whose square has order $\rt\geq 2$.

\subsection*{Definition}
For $n\geq 1$, let $A=\left(a_{ij}\right)$ be an indecomposable positive definite symmetric Cartan matrix 
of type $A_n$, $D_n$ or $E_n$, and $\mathfrak{g}$ denote the corresponding Lie algebra, 
with associated pairing denoted by $(\,\cdot\,|\,\cdot\,)$. 
In particular $a_{ii}=2$ for $1\leq i\leq n$, and $a_{ij}=a_{ji}\in \{0,-1\}$ for $1\leq i<j\leq n$. The $\kk$-algebra $\UU$ 
 is generated by $K_i^{\pm 1}$, $E_i$ and $F_i$, $1\leq i\leq n$,
with relations, for all $i$, $j$:
\begin{align}\label{eq:Uq-rel}
 K_iE_jK_i^{-1}&=q^{a_{ij}}E_j, & K_iF_jK_i^{-1}&=q^{-a_{ij}}F_j,\notag \\
  [E_i,F_j]&=\delta_{ij}\ffrac{K_i-K_i^{-1}}{q-q^{-1}},\quad  & K_iK_j&=K_jK_i,\\ 
    E_i^\rt&=F_i^\rt=0,  \quad &K_i^{2\rt}&=\one, \notag
 \end{align}
 with the Serre relations
\begin{align}
  E_iE_j=E_jE_i,\ \  F_iF_j&=F_jF_i, &\text{ if }a_{ij}=0,&\notag\\
  E_i^2E_j-(q+q^{-1})E_iE_jE_i+E_jE_i^2&=0, &\text{ if }a_{ij}=-1,&\notag\\
  F_i^2F_j-(q+q^{-1})F_iF_jF_i+F_jF_i^2&=0,  &\text{ if }a_{ij}=-1.&\notag
\end{align}
The algebra $\UU$ is a Hopf algebra where the coproduct, counit and
antipode are defined as 
\begin{align}
  \Delta(E_i)&= 1\otimes E_i + E_i\otimes K_i, 
  &\epsilon(E_i)&= 0, 
 &S(E_i)&=-E_iK_i^{-1}, 
 \\
 \Delta(F_i)&=K_i^{-1} \otimes F_i + F_i\otimes 1,  
  &\epsilon(F_i)&=0,& S(F_i)&=-K_iF_i,\notag
   \\
   \Delta(K_i)&=K_i\otimes K_i,
 &\epsilon(K_i)&=1,
 & S(K_i)&=K_i^{-1}.\notag
\end{align}
\newcommand{\weyl}{\mathcal{W}}
Let $L$ be the root lattice, with $\mathbb Z$-basis denoted by $\alpha_i$, $1\leq i\leq n$.
We denote by $\Delta_+$ the set of positive roots, by $N=|\Delta_+|$ its cardinality, and by
$\rho$ half the sum of the positive roots.
The formulas for $N$ and  the sum of positive roots $2\rho$
in different types are given below (compare with~\cite[Ch.\,VI]{Bourbaki}): 
$$
\begin{array}{|c|| c |c|}
\hline
&N&2\rho \\ \hline
A_n , n\geq 1& \frac{n(n+1)}{2}& \sum_{i=1}^n i(n-i+1)\alpha_i  \\ \hline
D_n, n\geq 4& n(n-1)& \sum_{i=1}^n (2in -i(i+1))\alpha_i   \\ \hline
E_6&36 & \text{see \cite[Plate V]{Bourbaki}}\\ \hline
E_7& 63& \text{see \cite[Plate VI]{Bourbaki}}\\ \hline
E_8&120 & \text{see \cite[Plate VII]{Bourbaki}}\\ \hline
\end{array}
$$

\subsection*{PBW basis}
Let $\weyl$ be the Weyl group generated by the simple reflexions $s_i$, $1\leq i\leq n$.
It  is a finite Coxeter group. Its  basic structural properties we  use here can be found in~\cite{Bourbaki}. 
 For $w\in \weyl$ we denote by $l(w)$ the  length of a  reduced expression in the generators $s_i$. 
Let us choose a reduced expression of the longest element
 of~$\weyl$,
\be \label{longest} w_0=s_{i_1}s_{i_2}\dots s_{i_N}\ ,\ee
 in the simple reflexions $s_i$, $1\leq i\leq n$.
To get an ordered list of positive roots \cite[Sec.\,VI.1.6, Cor.\,2]{Bourbaki} we set
\be \label{root_vectors}
\beta_1=\alpha_{i_1}\gc\ \ \beta_2=s_{i_1}(\alpha_{i_2})\gc \ \ \beta_3=s_{i_1}s_{i_2}(\alpha_{i_3})\gc\ \ \dots\gc\ \ \beta_N=s_{i_1}\dots s_{i_{N-1}}(\alpha_{i_N}) \ .\ee

For $1\leq i\leq n$, let $T_i$ be an algebra automorphism  of $\UU$ which acts on generators
$K_j$, $E_j$, and $F_j$ by
\begin{align}\label{eq:T-act}
  T_i(K_j)&= K_i^{-a_{ij}}K_j\gc 
  &T_i(E_i)  &= -F_iK_i\gc 
 &T_i(F_i)&=-K_i^{-1}E_i\gc 
 \\
 T_i(E_j)&=E_j\gc  
  &T_i(F_j)&=F_j\gc& 
  \text{ if }a_{ij}&=0\gc\notag\\
  T_i(E_j)&=-E_iE_j+q^{-1}E_jE_i\gc  
  &T_i(F_j)&=qF_iF_j-F_jF_i\gc& 
  \text{ if }a_{ij}&=-1\gp\notag
\end{align}
The root vectors  are then defined by, see  \cite{Lusztig} or \cite[Ch.\,8]{Jantzen},
\begin{align}
E_{\beta_1}&=E_{i_1}\gc&\ E_{\beta_2}&=T_{i_1}(E_{i_2})\gc& \ E_{\beta_3}&=T_{i_1}T_{i_2}(E_{i_3})\gc&\dots&\ \gc\ &E_{\beta_N}&=T_{i_1}\dots T_{i_{N-1}}(E_{{i_N}}) \ \gc\\
F_{\beta_1}&=F_{i_1}\gc&\ F_{\beta_2}&=T_{i_1}(F_{i_2})\gc& \ F_{\beta_3}&=T_{i_1}T_{i_2}(F_{i_3})\gc&\dots&\ \gc\ &F_{\beta_N}&=T_{i_1}\dots T_{i_{N-1}}(F_{{i_N}}) \ .\notag
\end{align}

\noindent {\bf Example.} For $A_2={\mathfrak{sl}}(3,\mathbb C)$  there are two reduced decompositions
of the longest element $w_0=s_1 s_2 s_1$ and $
w_0=s_2 s_1 s_2$. The corresponding sequences of positive root vectors are
$$
 E_1\gc \quad T_1(E_2)=-E_1E_2+q^{-1}E_2E_1\gc \quad T_1T_2(E_1)=E_2
 $$
and
$$
 E_2\gc\quad  T_2(E_1)=-E_2E_1+q^{-1}E_1E_2\gc \quad T_2T_1(E_2)=E_1\gp
$$

\noindent
The algebra automorphisms $T_i$ satisfy  the braid relations
\begin{align} 
T_i\circ T_j&=T_j\circ T_i & \text{if $a_{ij}=0$\gc\quad}\label{eq:rel-T}\\
T_i\circ T_j\circ T_i &=T_j\circ T_i \circ T_j & \text{if $a_{ij}=-1$\gp\,}\notag
\end{align}
For a given $w\in\weyl$ and a reduced decomposition $w=s_{j_1}\dots s_{j_m}$
there is an algebra automorphism $T_w=T_{j_1}\circ\dots \circ T_{j_m}$.
The relations~\eqref{eq:rel-T} assert that $T_w$ depends only on the element $w$ and not on its decomposition.

The algebra $\UU$ has $L$-grading denoted by $\wt$ and defined 
on generators by $\wt(E_i)=\alpha_i$, $\wt(F_i)=-\alpha_i$
and $\wt(K_i)=0$. We also define $\wt(E_i E_j)=\alpha_i+ \alpha_j$, etc. This makes the algebra graded, because relations are homogeneous.
We will use the following lemma.

\begin{lemma}\label{lem:L-degree}
For any root $\beta$, the root vectors
$E_\beta$ and $F_\beta$ have  $L$-grading $\wt(E_\beta)=\beta$ and $\wt(F_\beta)=-\beta$, respectively.
\end{lemma}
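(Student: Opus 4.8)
The plan is to show that each braid automorphism $T_i$ transports the $L$-grading $\wt$ by the simple reflection $s_i$, and then to read off the weights of the root vectors directly from their definition in terms of the $T_i$.

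First I would record the reflection formula $s_i(\alpha_j)=\alpha_j-a_{ij}\alpha_i$ (so $s_i(\alpha_i)=-\alpha_i$, while $s_i(\alpha_j)=\alpha_j$ or $\alpha_j+\alpha_i$ according as $a_{ij}=0$ or $-1$). Writing the grading as $\UU=\bigoplus_{\lambda\in L}\UU_\lambda$, the key claim is
\[
T_i\bigl(\UU_\lambda\bigr)\subseteq \UU_{s_i(\lambda)}\qquad\text{for all }\lambda\in L.
\]
Since $T_i$ is an algebra automorphism and $s_i$ is linear, it suffices to verify this on the algebra generators: if $g_1,\dots,g_m$ are homogeneous generators, then $T_i(g_1\cdots g_m)=T_i(g_1)\cdots T_i(g_m)$ has weight $\sum_a s_i(\wt(g_a))=s_i\bigl(\sum_a \wt(g_a)\bigr)$, which is $s_i$ applied to $\wt(g_1\cdots g_m)$. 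For the generators the verification is a direct inspection of the defining formulas~\eqref{eq:T-act}: for instance $T_i(E_i)=-F_iK_i$ has weight $-\alpha_i=s_i(\alpha_i)$; when $a_{ij}=-1$ both $E_iE_j$ and $E_jE_i$ have weight $\alpha_i+\alpha_j=s_i(\alpha_j)$, so $T_i(E_j)$ does too; and the remaining cases $T_i(K_j)$, $T_i(F_i)$, $T_i(F_j)$ are checked the same way.

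Granting the claim, I would iterate it: by induction $T_{i_1}\cdots T_{i_{k-1}}$ maps $\UU_\lambda$ into $\UU_{s_{i_1}\cdots s_{i_{k-1}}(\lambda)}$. Applying this to $E_{i_k}\in\UU_{\alpha_{i_k}}$ and to $F_{i_k}\in\UU_{-\alpha_{i_k}}$, and comparing with the definition~\eqref{root_vectors} of $\beta_k$, gives
\[
\wt(E_{\beta_k})=s_{i_1}\cdots s_{i_{k-1}}(\alpha_{i_k})=\beta_k,\qquad
\wt(F_{\beta_k})=-\,s_{i_1}\cdots s_{i_{k-1}}(\alpha_{i_k})=-\beta_k.
\]
Since every positive root occurs as some $\beta_k$, this proves the lemma. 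The only mildly delicate point is the generator-by-generator bookkeeping underlying the key claim; everything else is formal, resting on the fact that $\wt$ is an algebra grading and that $s_i$ acts linearly on $L$.
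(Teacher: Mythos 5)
Your proof is correct, but it takes a genuinely different and more economical route than the paper's. The paper deduces the lemma from the stronger-looking Lemma~\ref{lem:A.2} --- that $\wt\bigl(T_w(E_i)\bigr)=w(\alpha_i)$ whenever $l(ws_i)=l(w)+1$ --- whose proof in Appendix~\ref{appB} is an induction on $l(w)$ requiring genuine Weyl-group combinatorics: one picks $j\neq i$ with $w(\alpha_j)$ a negative root (via \cite[Sec.\,V.4.4, Thm\,1]{Bourbaki}), then splits into the cases $a_{ij}=0$ and $a_{ij}=-1$ with $l(ws_js_i)=l(w)\pm 1$, using the braid relations and the specific values $T_j(E_i)=E_i$ and $(T_i\circ T_j)(E_i)=E_j$ coming from~\eqref{eq:T-act}. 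You instead isolate the one-line structural fact $T_i(\UU_\lambda)\subseteq\UU_{s_i(\lambda)}$, verify it on generators, and propagate it multiplicatively --- which is legitimate because $\UU_\lambda$ is spanned by monomials in the homogeneous generators (all relations in~\eqref{eq:Uq-rel}, including $E_i^{\rt}=F_i^{\rt}=0$ and $K_i^{2\rt}=\one$, are homogeneous, as the paper notes) and $T_i$ is an algebra automorphism. Your argument dispenses with the length hypothesis altogether, yielding the cleaner and more general statement $T_w(\UU_\lambda)\subseteq\UU_{w(\lambda)}$ for every $w\in\weyl$ and any decomposition; and it retains the key virtue that motivated the paper's appendix, namely validity at roots of unity, since it works with the formal $L$-grading rather than eigenvalues of the adjoint action of the Cartan elements (the textbook generic-$q$ argument fails precisely because $q^{(\alpha_i|\lambda)}$ no longer determines $\lambda$). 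The one item of Lemma~\ref{lem:A.2} your route does not reproduce is the positivity assertion $w(\alpha_i)\in\Delta_+$, but the paper itself quotes that from Bourbaki, and for the lemma at hand the positivity of the $\beta_k$ is already built into the definition~\eqref{root_vectors}.
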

 When $q$ is not a root of unity this known lemma  can be established using the adjoint action of the Cartan elements \cite[Ch.\,6, Prop.\,23]{Klymik_book}. For completeness we give in Appendix \ref{appB}  a proof  of the stronger statement
 in the next lemma for all non-zero values of $q$.
 
\begin{lemma}\label{lem:A.2}
Assume that for a pair $(w,i)$, with $w\in \weyl$ and  $1\leq i\leq n$, we have $l(ws_i)=l(w)+1$. Then  $w(\alpha_i)\in \Delta_+$ and   $T_w(E_i)$ 
 has $L$-grading $\wt\bigl(T_w(E_i)\bigr)=w(\alpha_i)$. And similarly, $\wt\bigl(T_w(F_i)\bigr)=-w(\alpha_i)$.
\end{lemma}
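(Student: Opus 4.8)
The plan is to prove the two assertions separately, reserving the grading computation for an induction that rests on a single structural observation about the automorphisms $T_i$. For the first assertion, $w(\alpha_i)\in\Delta_+$, I would appeal to the standard length characterisation in a Coxeter group: for $w\in\weyl$ and a simple reflexion $s_i$ one always has $l(ws_i)=l(w)\pm1$, and the sign is $+$ precisely when $w(\alpha_i)$ is a positive root (see \cite[Ch.\,VI]{Bourbaki}). Thus the hypothesis $l(ws_i)=l(w)+1$ immediately gives $w(\alpha_i)\in\Delta_+$, with no reference to $\UU$ at all.

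The heart of the matter is the grading behaviour of the $T_i$. Writing $\UU_\lambda$ for the homogeneous component of weight $\lambda\in L$ (the grading is well-defined since the defining relations \eqref{eq:Uq-rel} are homogeneous), I claim that each $T_i$ is homogeneous of ``degree $s_i$'', i.e.\ $T_i(\UU_\lambda)\subseteq\UU_{s_i(\lambda)}$ for all $\lambda$. Since $T_i$ is an algebra automorphism, $\wt$ is additive under multiplication, and $s_i$ acts linearly on $L$, it suffices to verify $\wt(T_i(x))=s_i(\wt(x))$ on the generators $E_j,F_j,K_j^{\pm1}$, which is a line-by-line inspection of \eqref{eq:T-act}. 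For instance $T_i(E_i)=-F_iK_i$ has weight $-\alpha_i=s_i(\alpha_i)$; when $a_{ij}=-1$ both monomials of $T_i(E_j)=-E_iE_j+q^{-1}E_jE_i$ carry weight $\alpha_i+\alpha_j=s_i(\alpha_j)$; and $T_i(K_j)$ has weight $0=s_i(0)$. The remaining cases, including the $F$-generators with $\wt(F_j)=-\alpha_j$, are identical. Crucially this check uses only homogeneity of the relations and never that $q$ is generic, which is exactly why the statement holds for every non-zero $q$.

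With the claim in hand the grading assertion is immediate. I would fix a reduced expression $w=s_{j_1}\cdots s_{j_m}$, so that $T_w=T_{j_1}\circ\cdots\circ T_{j_m}$ is well-defined by the braid relations \eqref{eq:rel-T}. Starting from the homogeneous element $E_i$ of weight $\alpha_i$ and applying the claim $m$ times, each $T_{j_k}$ sends a homogeneous element to a homogeneous element and transforms its weight by $s_{j_k}$; moreover $T_w(E_i)\neq0$ because $T_w$ is an automorphism, so the weight is genuinely defined. Hence $\wt\bigl(T_w(E_i)\bigr)=s_{j_1}\cdots s_{j_m}(\alpha_i)=w(\alpha_i)$, the composite depending only on $w$. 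The same computation applied to $F_i$, of weight $-\alpha_i$, yields $\wt\bigl(T_w(F_i)\bigr)=w(-\alpha_i)=-w(\alpha_i)$.

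I expect no serious obstacle. The only point requiring care is the case-by-case verification of the degree-$s_i$ property on the generators in \eqref{eq:T-act}, in particular the sign conventions and the two-term images arising when $a_{ij}=-1$, together with the bookkeeping that $T_i$ acts on $F$-weights by $s_i$ as well. Note finally that Lemma~\ref{lem:L-degree} is recovered as the special case $w=s_{i_1}\cdots s_{i_{k-1}}$, $i=i_k$ for the fixed reduced word \eqref{longest}, where the hypothesis $l(ws_i)=l(w)+1$ holds automatically because the word is reduced.
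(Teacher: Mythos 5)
Your proof is correct, but it takes a genuinely different and more direct route than the paper's. The paper proves the grading formula by induction on $l(w)$: it first produces some $j\neq i$ with $w(\alpha_j)\in-\Delta_+$ (via the fact that only the identity fixes the positive chamber), then splits into three cases ($a_{ij}=0$; $a_{ij}=-1$ with $l(ws_js_i)=l(w)\pm 1$), and in each case manipulates the orbit $w\langle s_i,s_j\rangle$ together with specific identities such as $T_j(E_i)=E_i$ and $(T_i\circ T_j)(E_i)=E_j$ so that the induction hypothesis applies. You instead isolate the single structural fact that each $T_i$ is homogeneous of degree $s_i$, i.e.\ maps the weight-$\lambda$ component of $\UU$ into the weight-$s_i(\lambda)$ component; this is legitimately reduced to a generator check on~\eqref{eq:T-act} because $T_i$ is an algebra map, the grading is multiplicative, and $s_i$ is linear on $L$, and your generator check is right in all cases (including $T_i(E_j)=-E_iE_j+q^{-1}E_jE_i$ of weight $\alpha_i+\alpha_j=s_i(\alpha_j)$ when $a_{ij}=-1$, and $\wt(T_i(K_j))=0$). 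Composing along a reduced word and using that $T_w$ is an automorphism (so $T_w(E_i)\neq 0$ and its weight is genuinely defined) then gives $\wt(T_w(E_i))=w(\alpha_i)$ in one stroke. Your approach buys several things: it eliminates the case analysis entirely, it proves the grading statement for \emph{every} pair $(w,i)$ — the hypothesis $l(ws_i)=l(w)+1$ is, as you note, only needed for $w(\alpha_i)\in\Delta_+$, which both you and the paper get from the same Bourbaki citation — and it makes transparent why the statement holds at roots of unity, since only homogeneity of the relations and the explicit formulas~\eqref{eq:T-act} enter. Both arguments rely on the same external inputs (Lusztig's fact that the $T_i$ are well-defined automorphisms of $\UU$, and the braid relations~\eqref{eq:rel-T} making $T_w$ independent of the reduced expression), so the paper's longer induction does not secure anything your argument misses for this statement.
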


Recall that the root vectors are obtained from a reduced decomposition of the longest word~\eqref{longest}.
Lemma \ref{lem:L-degree} is obtained by applying Lemma \ref{lem:A.2} to 
$(s_{i_1}\dots s_{i_{k-1}},{i_{k}})$, $1\leq k\leq N$, and using \eqref{root_vectors}.

 Introducing 
  $I=\{0,1,\dots,2\rt-1\}$,  $J=\{0,1,\dots,\rt-1\}$, 
 we  can now construct a PBW basis 
 of $\UU$ \cite[Section 5.8]{Lusztig}
\be\label{E:PBW}
B_{m^-,m,m^+}=\prod_{\beta \in \Delta_+}F_\beta^{m^-_\beta}\prod_{i=1}^n K_i^{m_i} \prod_{\beta \in \Delta_+}E_\beta^{m^+_\beta}
 \ee
indexed by $m \in I^n$ and $m^\pm\in  J^{\Delta_+}$,
 or in other words $m^\pm=(m^\pm_\beta)$ is a map from $\Delta_+$ to $J$.
 We will use the notation $m^\pm_k$ for $m^\pm_{\beta_k}$ where $\beta_k$ is the $k$-th root defined in \eqref{root_vectors}.
We denote by $B^*_{m^-,m,m^+}$ the dual basis in $\bigl(\UU\bigr)^*$ defined by 
$$
\langle B^*_{m^-,m,m^+},B_{\tilde m^-,\tilde m,\tilde m^+}\rangle =\delta_{m^-,\tilde m^-}\delta_{m,\tilde m}\delta_{m^+,\tilde m^+}\gp
$$ 

\subsection{Main result} We are now in position to present 
the main result of this section.

\begin{theorem}\label{thm:qg}
a) The Hopf algebra $\UU$ is unimodular with the cointegral 
\be\label{eq:coint}
\coint=\prod_{i=1}^n\left( \sum_{m=1}^{2\rt} K_i^m \right)\prod_{\beta \in \Delta_+}F_\beta^{\rt-1}\prod_{\beta \in \Delta_+}E_\beta^{\rt-1} \gp
\ee
b)
The Hopf algebra $\UU$ is pivotal with pivots 
\be\label{eq:UU-pivots}
\pivot_\varepsilon=K_{2\rho}\prod_{i=1}^n K_i^{p\varepsilon_i}\ ,\qquad \varepsilon\in\{0,1\}^{\times n}
\ee 
and  it is unibalanced for any choice of $\varepsilon$, with
the corresponding symmetrised integral 
\be\label{eq:tr'}
\displaystyle \rintg=\lintg=B^*_{(\rt-1)^{\Delta_+}, p\varepsilon,(\rt-1)^{\Delta_+}}\ .\ee
 Here $(\rt-1)^{\Delta_+}$ is the constant map on $\Delta_+$ with value  $\rt-1$.
\end{theorem}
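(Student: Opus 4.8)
The plan is to prove both parts by direct verification on the Chevalley generators, organised around the triangular PBW decomposition~\eqref{E:PBW}. First I record the structural input. The subalgebras $U^{+}$ and $U^{-}$ generated respectively by the $E_i$ and by the $F_i$ are finite dimensional and graded by total degree, since the $q$-Serre relations and the relations $E_i^{\rt}=F_i^{\rt}=0$ are homogeneous; by Lemma~\ref{lem:L-degree} their PBW monomials $\prod_{\beta}E_\beta^{m_\beta}$ and $\prod_{\beta}F_\beta^{m_\beta}$ form bases, so the top degree $N(\rt-1)$ is one dimensional, spanned by $\Lambda_{E}:=\prod_{\beta}E_\beta^{\rt-1}$, respectively $\Lambda_{F}:=\prod_{\beta}F_\beta^{\rt-1}$. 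In particular $E_i\Lambda_{E}=\Lambda_{E}E_i=0$ and $F_i\Lambda_{F}=\Lambda_{F}F_i=0$. Writing $\coint=\Lambda_{K}\Lambda_{F}\Lambda_{E}$ with $\Lambda_{K}=\prod_i\sum_{m}K_i^{m}$, four of the six generator relations for a two-sided cointegral are immediate: since $K_i\Lambda_{K}=\Lambda_{K}=\Lambda_{K}K_i$ we get $K_i\coint=\coint$, and since in addition $\wt(\Lambda_{F})+\wt(\Lambda_{E})=0$ we get $\coint K_i=\coint$; commuting $F_i$ to the right through the group part $\Lambda_{K}$ and using $F_i\Lambda_{F}=0$ gives $F_i\coint=0$; and $\Lambda_{E}E_i=0$ gives $\coint E_i=0$.

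The two remaining relations, $E_i\coint=0$ and $\coint F_i=0$, are the heart of part (a). For $E_i\coint$ I would push $E_i$ to the right: past $\Lambda_{K}$ it becomes the twisted average $\Lambda_{K}^{(i)}=\prod_j\sum_m q^{-a_{ij}m}K_j^m$, which satisfies $\Lambda_{K}^{(i)}K_i^{\pm 1}=q^{\pm 2}\Lambda_{K}^{(i)}$; the summand in which $E_i$ reaches $\Lambda_{E}$ dies, and the cross commutator $[E_i,\Lambda_{F}]$ contributes lower terms carrying Cartan factors $K_i^{\pm1}$ that $\Lambda_{K}^{(i)}$ turns into the scalars $q^{\pm 2}$. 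In rank one this is a one-line check: the surviving coefficient is $[\rt-1]\,(q^{\rt}-q^{-\rt})/(q-q^{-1})$, which vanishes because $q^{2\rt}=1$. The relation $\coint F_i=0$ is the mirror computation, with the plain average $\Lambda_{K}$ (satisfying $\Lambda_{K}K_i^{\pm1}=\Lambda_{K}$) absorbing the Cartan factors from $[\Lambda_{E},F_i]$, again with vanishing coefficient $q^{\rt}-q^{-\rt}=0$. I expect the main obstacle to be the general-type version of these two identities: there the commutators $[E_i,\prod_\beta F_\beta^{\rt-1}]$ and $[\prod_\beta E_\beta^{\rt-1},F_i]$ must be controlled by the Levendorski\u{\i}--Soibelman / Lusztig straightening formulas, and one must verify that every surviving coefficient is annihilated by the averaging through an identity forced by $q^{2\rt}=1$. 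Granting these, $\coint$ is a two-sided cointegral and $\UU$ is unimodular, which is part (a).

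For part (b), the pivot claim is quick. Each $\pivot_\varepsilon=K_{2\rho}\prod_i K_i^{\rt\varepsilon_i}$ is group-like, being a product of the group-likes $K_i$. To see that $K_{2\rho}$ implements $S^2$ it suffices to check on generators: $S^{2}(E_i)=q^{2}E_i$, $S^{2}(F_i)=q^{-2}F_i$, $S^{2}(K_i)=K_i$, while conjugation by $K_{2\rho}$ multiplies $E_i$ by $q^{(2\rho,\alpha_i)}=q^{(\alpha_i,\alpha_i)}=q^{2}$ (simply laced) and $F_i$ by $q^{-2}$. The remaining factor $\prod_i K_i^{\rt\varepsilon_i}$ is a central group-like element (centrality of $K_i^{\rt}$ amounts to $q^{\rt a_{ij}}=1$), so by the observation that a central group-like times a pivot is again a pivot, each $\pivot_\varepsilon$ is a pivot and $\UU$ is pivotal.

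Finally I would compute the symmetrised integral and establish unibalancedness. The right integral $\rint$ is supported on the top PBW layer: the defining relation $(\rint\tensor\id)\Delta(x)=\rint(x)\one$ forces $\rint=B^{*}_{(\rt-1)^{\Delta_+},\mathbf c,(\rt-1)^{\Delta_+}}$ for a single $K$-exponent $\mathbf c$, pinned down by testing the relation on the $K_i$. The key computation is $\Delta$ of the top monomial: isolating the top $F$- and $E$-parts in one tensor leg collects the $K$-tails of $\Delta(E_i)=\one\tensor E_i+E_i\tensor K_i$ over all positive roots into $K_{\sum_\beta\beta}=K_{2\rho}$, which is exactly why $\pivot=K_{2\rho}$ symmetrises the integral and fixes $\mathbf c$ as the class of $2\rho$. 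Symmetrising, $\rintg(x)=\rint(\pivot_\varepsilon x)$ shifts the $K$-exponent by $2\rho+\rt\varepsilon$, so that $\rintg=B^{*}_{(\rt-1)^{\Delta_+},\rt\varepsilon,(\rt-1)^{\Delta_+}}$. For unibalancedness I would verify that this same functional also satisfies the left relation~\eqref{eq:mug-def-rel-left}, hence is simultaneously the symmetrised right and left integral, giving $\rintg=\lintg$; equivalently, one computes the comodulus and checks $\comod=\pivot_\varepsilon^{2}=K_{4\rho}$ (note $K_i^{2\rt}=1$), so that Lemma~\ref{agsquare} applies. Either route completes part (b).
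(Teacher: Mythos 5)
Your part (a) is conceded rather than proved: the two relations $E_i\coint=0$ and $\coint F_i=0$ are precisely the hard content of unimodularity, and after the rank-one check you explicitly proceed ``granting'' the general-type commutator control. Note that the paper does not fill this gap by brute force either --- it avoids the computation entirely. It proves only the easy statement that $\prod_i\bigl(\sum_m K_i^m\bigr)\prod_\beta F_\beta^{p-1}$ is a left cointegral of the negative Borel (by the same top-degree argument you use for your four immediate relations), then starts from the abstract left cointegral $\coint$ of $\UU$ (existence and uniqueness up to scalar), uses the modulus $\alpha$ --- group-like, hence $\alpha(E_i)=\alpha(F_i)=0$ because $E_i^p=F_i^p=0$ --- to get $\coint E_{\beta_j}=0$ for every root vector, and runs a downward induction on the PBW expansion $\coint=\sum_{m^+}\coint_{m^+}B_{0,0,m^+}$ using the commutation Lemma~\ref{lemma:commut} (a special case of Xi's straightening formula, proved in Appendix~\ref{appC}) to force $m^+=(p-1)^{\Delta_+}$; two-sidedness then comes for free from $\coint x=\alpha(x)\coint$. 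So some straightening input is unavoidable, but only the clean top-power statement of Lemma~\ref{lemma:commut} --- not the full control of $[E_i,\prod_\beta F_\beta^{p-1}]$ together with cancellation of all Cartan-twisted cross terms after averaging, which your route requires and which you do not supply.

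In part (b) there is a concrete error in the determination of the integral. Testing the defining relation on the $K_i$ gives no constraint at all (both sides vanish on Cartan monomials, since $\rint$ kills them); the constraint comes from applying $(\rint\otimes\id)\Delta$ to the top PBW monomial $B_{(p-1)^{\Delta_+},m,(p-1)^{\Delta_+}}$, whose coproduct contains the term $B_{(p-1)^{\Delta_+},m,(p-1)^{\Delta_+}}\otimes K_{2\rho}^{p-1}\prod_iK_i^{m_i}$, forcing $K_{2\rho}^{p-1}K^{\mathbf c}=\one$, i.e.\ $\mathbf{c}=(p+1)M$ where $2\rho=\sum_i M_i\alpha_i$ --- not the class of $2\rho$ as you assert. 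The paper's own $A_1$ computation in Section~\ref{sec:A1case} confirms this: there $\rint$ is supported at $K^{p+1}$, not at $K$. Consequently, shifting by $\pivot_\varepsilon$ places the symmetrised integral at Cartan exponent $p(M+\varepsilon)\bmod 2p$ (e.g.\ at $K^0$ for $A_1$ with $\varepsilon=1$, matching \eqref{eq:mug}), so your derivation arriving at the exponent $p\varepsilon$ rests on an arithmetic slip in the support of $\rint$ rather than on a correct computation; moreover, your claim that the defining relation ``forces'' $\rint$ to sit on the top PBW layer is stated without argument, whereas the paper simply verifies that its candidate satisfies the relation and invokes one-dimensionality of the space of integrals. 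Your pivot check via $q^{(2\rho|\alpha_i)}=q^2$ and your route to unibalancedness via $\comod=\pivot_\varepsilon^2=K_{2\rho}^2$ and Lemma~\ref{agsquare} do coincide with the paper's, but the comodulus itself still has to be extracted from the coproduct formula \eqref{eq:cop-PBW} applied to the top monomial, a step you again only gesture at.
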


Before giving a proof, we first note that as a consequence of Theorem~\ref{thm:main} the formula in~\eqref{eq:tr'} computes the modified trace $\t$ for endomorphisms of the regular representation.
We also note that for type $A_n$ and with slightly different version of the quantum group, a cointegral and an integral were computed in \cite{GW}.
Our proof for the cointegral goes along the lines in~\cite[Thm.\,2.1.5]{GW}, however in our case it requires the following lemma on commutation relations
 whose proof is in Appendix \ref{appC}.
 
\begin{lemma}\label{lemma:commut}
For $1\leq j<k\leq N$, we have in $\UU$ the commutation relation for the root vectors, with $\beta_j$ defined in \eqref{root_vectors},
\be \label{lem:commut}
E_{\beta_{j+1}}^{p-1}E_{\beta_{j+2}}^{p-1}\dots E_{\beta_{k}}^{p-1}E_{\beta_j}=
q^{(p-1)(\beta_j|\beta_{j+1}+\dots+\beta_k)} E_{\beta_j}E_{\beta_{j+1}}^{p-1}E_{\beta_{j+2}}^{p-1}\dots E_{\beta_{k}}^{p-1}\gp
\ee
\end{lemma}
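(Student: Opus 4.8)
The plan is to show that the left-hand side of~\eqref{lem:commut}, once expanded in the PBW basis~\eqref{E:PBW}, is a scalar multiple of the \emph{single} ordered monomial $M:=E_{\beta_j}E_{\beta_{j+1}}^{p-1}\cdots E_{\beta_k}^{p-1}$, and then to identify the scalar as $q^{(p-1)(\beta_j|\beta_{j+1}+\cdots+\beta_k)}$. This would be assembled from three ingredients: the Levendorskii--Soibelman straightening relations for the root vectors attached to the convex order~\eqref{root_vectors}, a combinatorial uniqueness statement for a weight, and a leading-coefficient computation in the associated graded algebra. I would first record the standard straightening relation: for $j\le s<t\le k$,
\be
E_{\beta_t}E_{\beta_s}=q^{(\beta_s|\beta_t)}E_{\beta_s}E_{\beta_t}+\bigl(\text{sum of PBW monomials in the }E_{\beta_m},\ s<m<t\bigr),
\ee
see e.g.\ \cite[Ch.\,8]{Jantzen}. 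Since only roots strictly between $\beta_s$ and $\beta_t$ appear in the corrections, the span $U_{[j,k]}$ of the ordered monomials $\prod_{l=j}^{k}E_{\beta_l}^{c_l}$ is a subalgebra; as the left-hand side of~\eqref{lem:commut} is a product of the $E_{\beta_l}$ with $j\le l\le k$, it lies in $U_{[j,k]}$. Thus its expansion only involves monomials $\prod_{l=j}^{k}E_{\beta_l}^{c_l}$ with $0\le c_l\le p-1$, the upper bound coming from $E_\beta^{p}=0$ (valid since $E_i^{p}=0$ by~\eqref{eq:Uq-rel} and the $T_w$ are algebra automorphisms). By Lemma~\ref{lem:L-degree} all of these have the same $L$-grading $\Lambda:=\beta_j+(p-1)\sum_{l=j+1}^{k}\beta_l$.

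The heart of the proof is the uniqueness claim: $c_j=1$, $c_l=p-1$ for $j<l\le k$ is the \emph{only} solution of $\sum_{l=j}^{k}c_l\beta_l=\Lambda$ with $0\le c_l\le p-1$. I would rewrite this as $(c_j-1)\beta_j=\sum_{l=j+1}^{k}\bigl((p-1)-c_l\bigr)\beta_l$, whose right-hand side lies in $\mathrm{cone}(\beta_{j+1},\dots,\beta_k)$. If $c_j\ge 1$ the left-hand side lies in $\mathrm{cone}(\beta_j)\subseteq\mathrm{cone}(\beta_1,\dots,\beta_j)$, and the convexity of the ordering~\eqref{root_vectors}, i.e.\ the cone-separation property $\mathrm{cone}(\beta_1,\dots,\beta_j)\cap\mathrm{cone}(\beta_{j+1},\dots,\beta_N)=\{0\}$, forces both sides to vanish; hence $c_j=1$ and, all $\beta_l$ being positive roots, $c_l=p-1$. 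The only other option, $c_j=0$, would give $\beta_j+\sum_{l>j}\bigl((p-1)-c_l\bigr)\beta_l=0$, impossible for a nonzero non-negative combination of positive roots. Consequently the weight-$\Lambda$ subspace of $U_{[j,k]}$ is one-dimensional, spanned by $M$, and the previous paragraph yields $\text{LHS}=\lambda M$ for a scalar $\lambda$.

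To pin down $\lambda$ I would pass to the associated graded algebra for the De Concini--Kac filtration, where the straightening relations degenerate to the exact $q$-commutations $\bar E_{\beta_s}\bar E_{\beta_t}=q^{(\beta_s|\beta_t)}\bar E_{\beta_t}\bar E_{\beta_s}$ ($s<t$) together with $\bar E_\beta^{p}=0$. Commuting $\bar E_{\beta_j}$ to the front there is immediate and produces exactly $q^{(p-1)(\beta_j|\beta_{j+1}+\cdots+\beta_k)}\bar M$, a nonzero monomial. Since the principal symbol of a product of root vectors equals the product of their symbols whenever the latter is nonzero, the symbol of the left-hand side of~\eqref{lem:commut} is this element, while the symbol of $\lambda M$ is $\lambda\bar M$; comparing gives the asserted value of $\lambda$ and completes the proof.

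The main obstacle is the uniqueness step: upgrading an equality of $L$-gradings to uniqueness of the PBW monomial. The root vectors $\beta_j,\dots,\beta_k$ are in general linearly dependent, so this is false on the level of weights alone, and it is precisely the convexity of~\eqref{root_vectors} (the cone-separation property) that makes them behave, within an interval and under the bounds $0\le c_l\le p-1$, like independent variables. Once this combinatorial input is secured, the containment in $U_{[j,k]}$ and the scalar computation are formal consequences of the standard PBW and straightening machinery.
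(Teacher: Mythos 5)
Your proof is correct, but it takes a genuinely different route from the paper's. The paper (Appendix~\ref{appC}) argues by induction on $k-j$: it invokes Xi's straightening formula~\eqref{thm:xi} together with the weight-vanishing criterion~\eqref{coeff_vanishing} for the coefficients $\rho$, handles $k-j=1$ directly, and in the inductive step kills every correction term by observing that it gets multiplied on the left by $E_{\beta_{j+1}}^{p-1}\cdots E_{\beta_{k-1}}^{p-1}$, so that the induction hypothesis applied to the smallest index $l$ with $a_l\neq 0$ produces a factor $E_{\beta_l}^{p}=0$. You instead avoid induction altogether: you confine the left-hand side of~\eqref{lem:commut} to the interval subalgebra via the same straightening input (your Levendorskii--Soibelman relation is exactly~\eqref{thm:xi}), prove via the cone-separation property of the convex order~\eqref{root_vectors} that $E_{\beta_j}E_{\beta_{j+1}}^{p-1}\cdots E_{\beta_k}^{p-1}$ is the \emph{unique} PBW monomial of its weight with exponents in $[0,p-1]$, and then extract the scalar in the associated graded algebra, where the root vectors $q$-commute. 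Your uniqueness claim is sound: $\{\beta_1,\dots,\beta_j\}$ is the inversion set of $(s_{i_1}\cdots s_{i_j})^{-1}$, and applying this element to any vector in $\mathrm{cone}(\beta_1,\dots,\beta_j)\cap\mathrm{cone}(\beta_{j+1},\dots,\beta_N)$ exhibits it as simultaneously a nonnegative combination of negative and of positive roots, hence zero -- you should include this two-line verification, since the paper never states cone-separation. Likewise, the De Concini--Kac-type filtration on the \emph{restricted} algebra $\UU$, with truncated twisted-polynomial associated graded and multiplicativity of symbols, is standard but should be justified or cited; it does follow from~\eqref{thm:xi} using the exponent drops $a_j<b$, $a_k<a$ in the correction terms, and in your application no exponent overflows $p-1$, so the symbol computation is legitimate. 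What each approach buys: the paper's induction is entirely self-contained given~\eqref{thm:xi}, exploiting nilpotency to destroy corrections term by term; your argument isolates the structural reason the identity holds (convexity forces the weight space to be one-dimensional) and yields the coefficient in one stroke, at the cost of two extra pieces of standard machinery.
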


\newcommand{\Uq}{\overline{U}_q}
\begin{proof}[Proof of Thm.~\ref{thm:qg}]
We first prove the part $a)$. We begin with computing  
cointegrals for the Borel subalgebras.
For brevity, we will use  the notation $\Uq:=\UU$

Let $\Uq^-$ be the  negative Borel  subalgebra with the basis $B_{m^-,m,0}$\,, $m^-\in  J^{\Delta_+}$ and $m \in I^n$, it is also a Hopf subalgebra. And similarly for the positive $\Uq^+$
 with the basis $B_{0,m,m^+}$, $m \in I^n$ and $m^+\in  J^{\Delta_+}$.
 
We claim that 
\be\label{eq:coint-}
\coint^-= \prod_{i=1}^n\left( \sum_{m=1}^{2\rt} K_i^m \right)\prod_{\beta \in \Delta_+}F_\beta^{\rt-1}
\ee
is a left cointegral for $\Uq^-$.
Indeed,
\be K_i\coint^-=\coint^-=\epsilon(K_i)\coint^-\quad \text{ for }1\leq i\leq n\ .
\ee
From  
Lemma~\ref{lem:L-degree}
 we see that $\prod_{\beta \in \Delta_+}F_\beta^{\rt-1}$ has the minimal possible $L$-degree  $-(p-1)2\rho$.
Therefore we have 
\be\label{eq:FF0}
F_i \cdot \prod_{\beta \in \Delta_+}F_\beta^{\rt-1} = 0\gp
\ee 
We can then check
\be
 F_i\coint^-=0=\epsilon(F_i)\coint^-\gc\quad \text{ for }1\leq i\leq n \gc
\ee 
because moving $F_j$ through the Cartan part of $\coint^-$ just replaces $K_i$ by $q^{a_{ij}}K_i$ and the most non-trivial part is the equality~\eqref{eq:FF0}.
Hence for all $x\in \Uq^-$, we have 
\be x\coint^-=\epsilon(x)\coint^- ,\ee
 and so $\coint^-$ is indeed a left cointegral in $\Uq^-$.
We similarly get that $$\coint^+= \prod_{\beta \in \Delta_+}E_\beta^{\rt-1}\prod_{i=1}^n\left( \sum_{m=1}^{2\rt} K_i^m \right)$$
is a right cointegral in $\Uq^+$. 

We know that $\Uq$ has a non-zero left cointegral $\coint$, unique up to normalisation.
Moreover there exists a group-like element $\alpha\in\Uq^{\,*}$, called the modulus, such that
\be\label{eq:Lambda-modulus}
\coint x =\alpha(x)\coint\quad \text{ for all }x\in \Uq\ ,
\ee
see \cite[Eq. (10.8)]{Ra-book}.
Using the basis~\eqref{E:PBW} in $\Uq$, we see that $\Uq$ is a free left module over $\Uq^-$ with basis $B_{0,0,m^+}$ with $m^+\in J^{\Delta_+}$.
Let us write $\coint$ in this basis 
\be \coint=\sum_{m^+} \coint_{m^+}\,B_{0,0,m^+}\quad \text{with }\quad
\coint_{m^+}\in \Uq^- . \label{E:Lambda}
\ee 

Using~\eqref{eq:Lambda-modulus} we get
 \be \coint E_i=\alpha(E_i)\coint=0 \quad \text{ for }1\leq i\leq n \ .\ee
Here, the vanishing is because   the modulus $\alpha$ is group-like and hence $\alpha(E^p_i)=\alpha(E_i)^p$, but
 $E_i^p=0$ and so $\alpha(E_i)=0$.
 We therefore have that for all root vectors $E_{\beta_j}$ 
 \be \label{vanishing}
 \sum_{m^+} \coint_{m^+}\,B_{0,0,m^+}E_{\beta_j}=0\ .
 \ee
 We show by induction on $\nu=N-j$ that here $\coint_{m^+}=0$ if $m^+_l<p-1$ for some $l\geq j$.
 
 Let us denote by $\tau_j(m^+)$ the result of increasing the $j$-th component of $m^+$ by $1$.
 We have that $B_{0,0,\tau_j(m^+)}$ is zero if $m^+_j=p-1$ and is a PBW basis element otherwise.
 
 We begin with $\nu=0$, 
 the equation \eqref{vanishing} for $j=N$ then gives 
 \be 
 \sum_{m^+} \coint_{m^+}\,B_{0,0,\tau_N(m^+)}=0 \gc
 \ee
 where only terms with  $m^+_N<p-1$ contribute. As the corresponding  elements $B_{0,0,\tau_N(m^+)}$ are linearly  independent over $\Uq^-$, we have 
 $\coint_{m^+}=0$ if $m^+_N<p-1$. This is the first step of induction.

By the induction hypothesis at $\nu=N-j$ we assume $\coint_{m^+}=0$ in~\eqref{vanishing} if $m^+_l<p-1$ for some $l\geq j$.
 Then, equation \eqref{vanishing} for $\nu=N-j+1$  gives
 \be \sum_{m^+\atop
 m^+_j=\dots=m^+_N=p-1} \coint_{m^+}\,B_{0,0,m^+}E_{\beta_{j-1}}=0\ .\notag\ee 
 Using the commutation relation \eqref{lem:commut}, we obtain
 \be \sum_{m^+\atop
 m^+_j=\dots=m^+_N=p-1} \coint_{m^+}\,B_{0,0,\tau_{j-1}(m^+)}=0 \ .\notag\ee
 We deduce as before $\coint_{m^+}=0$ if $m^+_{j-1}<p-1$ and this finishes the proof by induction.
 
 As the equality~\eqref{vanishing} is true for all root vectors,
 we have thus obtained that only the  term with $m^+=(p-1)^{\Delta_+}$ contributes to~\eqref{E:Lambda}.
We obtain that the left cointegral has the form 
\be 
\coint=\coint_{(p-1)^{\Delta_+}}\, B_{0,0,(p-1)^{\Delta_+}} \gc \quad \text{ with }\quad\coint_{(p-1)^{\Delta_+}}\in \Uq^-\ .
\ee

Recall that $\coint$ is a left cointegral by assumption, therefore we  have the  equality 
 \be
  x\coint = \epsilon(x)\coint \quad \text{ for all }x\in \Uq^-\ .
  \ee
 Using that $\Uq$ is a free module over $\Uq^-$, we get 
 \be 
 x\coint_{(p-1)^{\Delta_+}} = \epsilon(x)\coint_{(p-1)^{\Delta_+}}\quad \text{ for all }x\in \Uq^-\ .
 \ee
 We have that $\coint_{(p-1)^{\Delta_+}}$ is a left cointegral
 in $\Uq^-$, i.e.\ it is proportional to $\coint^-$ from~\eqref{eq:coint-}. 
  This shows that $\coint$ is proportional to  $\coint^-B_{0,0,(p-1)^{\Delta_+}}$
   which is the formula in $\eqref{eq:coint}$.
  
 We now show that $\coint$ is two-sided. Indeed, for the right multiplication on $\coint$ we have 
 $$
 \coint K_i=\coint\gc \qquad \coint E_i=\alpha(E_i)\coint= 0\gc \qquad \coint F_i=\alpha(F_i)\coint=0\gc \quad \text{for}\ 1\leq i\leq n\gc
 $$
  where the first equality is due to the relation~\eqref{eq:Uq-rel} and we used explicit expression~\eqref{eq:coint},
for   the second
    and    third equalities we first used~\eqref{eq:Lambda-modulus} and then the fact that the modulus $\alpha$  vanishes on $E_i$ and $F_i$ because $\alpha$  is group-like and $E_i^p= F_i^p=0$.
 We have thus shown that $\alpha=\epsilon$ and therefore $\coint$ is a two-sided cointegral which 
 implies unimodularity of $\Uq$.

Now we prove part $b)$.
 To verify the defining relation  for the right integral $\rint$ we will need a formula for coproduct of PBW basis elements. Let 
$$
K_\beta=\prod_{i=1}^n K_i^{n_i} \qquad \text{for}\quad \beta=\sum n_i\alpha_i\gp
$$
For the  root vectors $E_\beta$, for $\beta\in \Delta_+$, the coproduct can be written as follows \cite[Sec.\,4.12]{Jantzen} 
\begin{align}
\Delta (E_\beta)&=E_\beta\otimes K_\beta + 1\otimes E_\beta+
\sum_\nu x_\nu\otimes y_\nu
\end{align}
where $x_\nu$ and $y_\nu$ are  PBW elements $B_{0,m,m^+}\in \Uq^+$  with 
non-zero $m^+$ and such that $\wt(x_\nu)+\wt(y_\nu)=\beta$.
We similarly have
\begin{align}
\Delta (F_\beta)&=F_\beta\otimes 1+ K_\beta^{-1}\otimes F_\beta+ \sum_\nu x_\nu\otimes y_\nu
\end{align}
where $x_\nu$ and $y_\nu$ are now PBW elements $B_{m^-,m,0}\in \Uq^-$ with non-zero $m^-$ and such that $\wt(x_\nu)+\wt(y_\nu)=-\beta$.
More generally, for the coproduct of a PBW basis element~\eqref{E:PBW}, we have
\begin{align}\label{eq:cop-PBW}
\Delta (B_{m^-,m,m^+})=&B_{m^-,m,m^+}  \otimes K_{\wt(B_{0,0,m^+})}\prod_{i=1}^n K_i^{m_i}\\
&+ K_{\wt(B_{m^-,0,0})}\prod_{i=1}^n K_i^{m_i}\otimes B_{m^-,m,m^+}
+ \sum_\nu x_\nu \otimes y_\nu\notag
\end{align}
where $x_\nu$ and $y_\nu$ are in the span of PBW elements $B_{\tilde m^-,\tilde m,\tilde m^+}$  where  all components of $\tilde m^-$ (resp. $\tilde m^+$) are lower or equal to those of $m^-$
(resp. $m^+$), and at least
one of them is strictly lower.

 Let \mbox{$M:=\bigl(M_i\bigr)_{1\leq i\leq n}$} be the coordinates
of the sum of positive roots in basis of simple roots:
$$
2\rho=\sum_{\beta\in \Delta_+} \beta=\sum_{i=1}^n M_i\alpha_i\gp
$$
 The corresponding Cartan element is
\mbox{$K_{2\rho}=\prod_{i=1}^n K_i^{M_i}$}.

Let us now verify that
\be
\rint=B^*_{(\rt-1)^{\Delta_+},(p+1) M,(\rt-1)^{\Delta_+}}
\ee
satisfies the defining relation for the right integral
 \be\label{eq:rint-def-2}
 (\rint\otimes \id)\Delta(x)=\rint(x) \one\gp
 \ee
For PBW elements $B_{m^-,m,m^+}$ where at least one $m^\pm_\beta$  is lower than $\rt-1$, using~\eqref{eq:cop-PBW} we see that both sides of this equation give $0$.
For $B_{(\rt-1)^{\Delta_+},m, (\rt-1)^{\Delta_+}}$, we get
\begin{align}
\Delta\bigl(B_{(\rt-1)^{\Delta_+},m, (\rt-1)^{\Delta_+}}\bigr)=B_{(\rt-1)^{\Delta_+},m, (\rt-1)^{\Delta_+}}\otimes K_{2\rho}^{\rt-1}\prod_{i=1}^n K_i^{m_i}+ \text{ other terms} .
\end{align}
Here, $\rint\otimes \id$ vanishes on the ``other terms".
If $m\neq (p+1)M$ we again get $0$ on both sides of~\eqref{eq:rint-def-2}.
In the remaining case with  $m=(p+1)M$, we have
 $K_{2\rho}^{\rt-1}\prod_{i=1}^n K_i^{(p+1)M_i}=\one$ 
which shows that the equality~\eqref{eq:rint-def-2} holds indeed.
 
We now compute the comodulus $\comod$ using the defining equation \eqref{eq:rint-a}. Using
$$
\rint\bigl(B_{(\rt-1)^{\Delta_+},(p+1) M,(\rt-1)^{\Delta_+}}\bigr)=1\ ,
$$
 we obtain the formula
\be
\comod=(\id\tensor\rint)\Delta\bigl(B_{(\rt-1)^{\Delta_+},(p+1) M,(\rt-1)^{\Delta_+}}\bigr)\ .
\ee
Taking now into account the second term on RHS of~\eqref{eq:cop-PBW}, we  have 
$$
\Delta\bigl(B_{(\rt-1)^{\Delta_+},(p+1)M, (\rt-1)^{\Delta_+}}\bigr)= K_{2\rho}^{1-\rt}\prod_{i=1}^n K_i^{(p+1)M_i}\otimes B_{(\rt-1)^{\Delta_+},(p+1)M, (1-\rt)^{\Delta_+}}+ \text{ other terms} \ .
$$
From this, we deduce the value of the comodulus 
\be
 \comod=K_{2\rho}^{1-\rt}\prod_{i=1}^n K_i^{(p+1)M_i}=K_{2\rho}^2\ .
 \ee

We study next group-like  square roots of $\comod$, these are $\pivot_\varepsilon=K_{2\rho}\prod_{i=1}^n K_i^{p\varepsilon_i}$, with $\varepsilon\in\{0,1\}^{\times n}$. 
We check on generators that each $\pivot_\varepsilon$ implements $S^2$, and so a pivot.
Indeed, for $1\leq i\leq n$,
\begin{align}
\pivot_\varepsilon K_i\,\pivot_\varepsilon^{-1}&=K_i=S^2(K_i)\ ,\notag \\
\pivot_\varepsilon E_i\,\pivot_\varepsilon^{-1}&=K_iE_iK_i^{-1}=S^2(E_i)\ ,\notag\\
\pivot_\varepsilon F_i\,\pivot_\varepsilon^{-1}&=K_iF_iK^{-1}=S^2(F_i)\ .\notag
\end{align}
Therefore, the Hopf algebra $\Uq$ is pivotal with a pivot $\pivot_\varepsilon=K_{2\rho}\prod_{i=1}^n K_i^{p\varepsilon_i}$ for any $\varepsilon\in\{0,1\}^{\times n}$.
We then get formula~\eqref{eq:tr'} for the right symmetrised integral.
By Lemma \ref{agsquare},  $(\Uq,\pivot_{\varepsilon})$ is unibalanced for any choice of $\varepsilon$ because $\comod=\pivot_{\varepsilon}^2$,
or the right symmetrised  integral is also left. Moreover, we have $\rintg=\lintg$ and so  \eqref{eq:tr'} holds for the left symmetrised integral too.
\end{proof}

 \section{Modified trace for the restricted quantum $\mathfrak{sl}_2$}\label{sec:A1case}
 Here,  we apply results of the previous section to  type $A_1$ and  demonstrate how the modified trace for indecomposable projectives can be explicitly computed from the symmetrised integral.
 For this we will use an explicit basis of  Hom-spaces between indecomposable projectives   constructed in \cite{FGST2}. 
The quantum group in type $A_1$, for the choice $q=e^{i\pi/p}$ and $p\geq 2$, is known as \textit{restricted} quantum $\mathfrak{sl}_2$, and
  will be denoted by $\Urest$. 
 In \cite{BBG}     the
modified trace  on all endomorphisms of indecomposable projectives in $\Upmod$ was  computed and then extended to the regular representation $\Urest$. 
  Here we do the converse: we reprove \cite{BBG} formulas starting with the
  symmetrised integral. In this section, we set 
   $[k]=\ffrac{q^k-q^{-k}}{q-q^{-1}}$  and 
$[m]!=\prod_{k=1}^m [k]$, and $\qbin{m}{k} = \frac{[m]!}{[k]![m-k]!}$, for $k$ and $m$ positive integers.

\newcommand{\coeffN}{\eta}

 \subsection*{Symmetrised integral}
 We will work with the choice of pivot $\pivot:=\pivot_{\varepsilon=1}=K^{p+1}$, recall~\eqref{eq:UU-pivots}.
 In the PBW basis of $\Urest$,  the right integral is given by 
\begin{gather*}
  \rint(F^i E^m K^n)
  =
 \coeffN\,
 \delta_{i, p-1}\delta_{m, p-1}\delta_{n,  p+1}
\end{gather*}
where 
$\coeffN$  is a non-zero normalising coefficient.
 Then our (right) symmetrised integral is
\be\label{eq:mug}
\rintg(F^i E^m K^n ) =
\coeffN \,\delta_{i,p-1}\delta_{m,p-1}\delta_{n,0}
 \ .
\ee

\newcommand{\w}{\boldsymbol{w}}
\newcommand{\e}{\boldsymbol{e}}

\subsection*{Basis for the center  $Z(\Urest)$} 
Recall that the center of $\Urest$ is $3p-1$ dimensional.
 The basis of $Z(\Urest)$  consists of
 the central idempotents $\e_s$ and nilpotent elements $\w^{\pm}_s$.
 The formulas for these elements in the PBW basis 
 were given  in
 \cite{Gainutdinov:2007tc}:\,\footnote{We used here a relation with Radford basis in the center: the formulas are extracted from Section 3.2.7,  Propositions C.4 and C.5.1 in~\cite{Gainutdinov:2007tc}.}
 \begin{align}\label{eq:center-el}
 \w^+_s &=     
 \zeta_s \sum_{n=0}^{s - 1}
    \sum_{i=0}^{n}
    \sum_{j=0}^{2p - 1}\! ([i]!)^2
    q^{j(s - 1 - 2n)}\!\qbin{s - n + i - 1}{i}\!\qbin{n}{i}
    F^{p - 1 - i} E^{p - 1 - i} K^j,\\
  \w^-_s &=   
  \zeta_s\sum_{n=0}^{p-s - 1}
      \sum_{i=0}^{n}
    \sum_{j=0}^{2p - 1}\!(- 1)^{ i + j} ([i]!)^2
    q^{j(p-s - 1 - 2n)}\!\qbin{p-s - n + i - 1}{i}\!\qbin{n}{i}
    F^{p - 1 - i} E^{p - 1 - i} K^j,\nonumber\\
  \e_0 &=
  \zeta_0
  \sum_{n=0}^{p-1}
      \sum_{i=0}^{n}
    \sum_{j=0}^{2p - 1}\!(- 1)^{ i + j} ([i]!)^2
    q^{j(p - 1 - 2n)}\!\qbin{p - n + i - 1}{i}\!\qbin{n}{i}
    F^{p - 1 - i} E^{p - 1 - i} K^j, \nonumber \\
  \e_p &=   
  \zeta_p
  \sum_{n=0}^{p - 1}
    \sum_{i=0}^{n}
    \sum_{j=0}^{2p - 1}\! ([i]!)^2
    q^{j(p - 1 - 2n)}\!\qbin{p - n + i - 1}{i}\!\qbin{n}{i}
    F^{p - 1 - i} E^{p - 1 - i} K^j,\nonumber\\
 \e_s &=  \ffrac{q^s+q^{-s}}{[s]^2}(\w^+_s + \w^-_s)\nonumber \\
 & + 
\zeta_s
   \sum_{m=0}^{p-2} \sum_{j=0}^{2p-1}\Bigl(\sum_{n=0}^{s-1}
 q^{j(s-1-2n)} \Bnm^+_{n,p-1-m}(s)\nonumber
 + \sum_{k=0}^{p-s-1}
 q^{j(-s-1-2k)} \Bnm^-_{k,p-1-m}(p-s)\Bigr)F^m E^m K^{j}, \nonumber
 \end{align}
  where  $\Bnm^{\pm}_{n,m}$ are non-zero numbers and we set
\begin{align}
\zeta_s &
= \ffrac{(-1)^{p-s-1}}{2p} \ffrac{[s]^2}{([p-1]!)^2}\gc \qquad 1\leq s \leq p-1\gc \\\
\zeta_0 &
= \ffrac{(-1)^{p-1}}{2p} \ffrac{1}{([p-1]!)^2} \gc \qquad
\zeta_p 
= \ffrac{1}{2p} \ffrac{1}{([p-1]!)^2}\gp \nonumber
\end{align}

The symmetrised integral from~\eqref{eq:mug} has the following values on the central basis elements~\eqref{eq:center-el}:
  \begin{align}\label{eq:rintg-center}
  \rintg( \w^{+}_s) &= s \coeffN \zeta_s\gc \qquad  \rintg( \w^{-}_s) = (p-s) \coeffN \zeta_s\gc
   \\
       \rintg( \e_s) &= (-1)^{s}p  \coeffN (q^s +q^{-s})
    \zeta_0\gc\notag\\
   \rintg(\e_p) &= p\coeffN 
   \zeta_p\gc
   \qquad   \rintg( \e_0) =p\coeffN 
   \zeta_0\gp\notag
  \end{align}
  
\subsection*{Extension of $\rint_\pivot$ to $\Upmod$}
Here, we compute the modified trace\footnote{We recall that by Theorem~\ref{thm:qg} it is both right and left.} on endomorphisms of indecomposable projective $\Urest$-modules.
 We recall now our result in Theorem~\ref{thm:main} on the modified trace $\t$, and also note that for evaluating  $\t_P$ on endomorphisms $f$  of $P$ it is enough to consider only corresponding trace classes $[f]$. For this, we will also recall a basis in
 $$
 \HH:=\HH\bigl(\Upmod\bigr)\gp
 $$

Indecomposable projective $\Urest$-modules are classified up to isomorphism in \cite{FGST2}: they are precisely the
 projective covers $\PP^\pm_s$ of the simple modules of highest weights $\pm q^{s-1}$ 
 where  $1\leq s\leq p$.   In particular, $\PP^\pm_p$ is a simple module with highest weight $\pm q^{\rt-1}$. The module $\PP^+_1$ is the projective cover of the trivial one. 
 The non-trivial morphisms between indecomposable projective modules are listed below: 
  \begin{itemize}
  \item    
  the endomorphism ring $\End_{\Urest}(\PP^\pm_s)$ is one dimensional  for $s=p$ and  two dimensional with basis $\{\Id_{\PP^\pm_s},x_s^{\pm}\}$,
 for $1\leq s \leq p-1$,
 \item the $\Hom$-spaces $ \Hom_{\Urest}(\Proj_s^+,\Proj_{\rt-s}^{-})$ and $ \Hom_{\Urest}(\Proj_{s}^-,\Proj_{\rt-s}^{+})$
 are two dimensional with respective bases $\{a_s^+, b_s^+\}$ and $\{a_s^-, b_s^-\}$, for $1\leq s \leq p-1$.
\end{itemize} 
It is proven in \cite{BBG}, that
the images of $x_s^\epsilon=b_{p-s}^{-\epsilon}a_s^\epsilon$
and 
 $ x_{p-s}^{-\epsilon}=a_s^\epsilon b_{p-s}^{-\epsilon}$
 in $\HH$ coincide, i.e.
 $[x_s^\epsilon]=[x_{p-s}^{-\epsilon}]$ for any $1\leq s\leq p-1$.
 A basis of $\HH$ consists of trace classes of identities of indecomposable projectives 
 $[\id_{\PP^\pm_s}]$, $1\leq s\leq p$, and trace classes of nilpotent elements $[x_s^+]$, $1\leq s\leq p-1$.
 
 \newcommand{\prim}{I}
In order to compute the modified trace $\t$  on the above basis in $\HH$, we need primitive idempotents.
  Let us first define the projectors onto $q^n$-eigenspace of $K$:
  \be
  \pi_n = \ffrac{1}{2p} \sum_{j=0}^{2p-1} q^{-nj} K^j .
  \ee
  The primitive (non-central) idempotents are then
  \be
  \prim_{n,s} = \pi_n \e_s,\qquad n\in\mathbb{Z}_{2p},\quad 1\leq s\leq p-1, \quad n-s=1\,\mathrm{mod}\,2 ,
  \ee
  and the indecomposable projective modules can be constructed as $\PP_s^+ = U I_{s-1,s}$ and $\PP_{p-s}^- = U I_{-s-1,s}$, where $U$ is the regular representation of $\Urest$.
  Finally,   $x^{+}_s$ and $x^{-}_{p-s}$ equal to the actions of  $\w^+_s$ on $\PP_s^+$ and $\w^-_s$ on $\PP_{p-s}^-$, respectively, so that we have
  \be
  \t_{\PP_s^+}(x^+_s) = \rint_\pivot(I_{ s-1,s} \, \w^+_s)\ , \qquad   \t_{\PP_{p-s}^-}(x^-_{p-s}) = \rint_\pivot(I_{ -s-1,s} \, \w^-_s)\ .
  \ee
Recall Remark~\ref{rem:indP-ext} explaining how to express a modified trace on an indecomposable projective via the modified trace on the regular representation given by the symmetric form $\rintg$.
 Inserting the primitive idempotents $\prim_{s-1,s}$ into the arguments of $\rintg$ in~\eqref{eq:rintg-center}, we get
 \begin{align}
  \rintg( I_{s-1,s}\w^{+}_s) &= \coeffN 
  \zeta_s\gc
  \qquad  \rintg(I_{-s-1,s} \w^{-}_s) = \coeffN 
  \zeta_s\gc 
   \\
       \qquad  \rintg(I_{s-1,s} \e_s) &=
     \coeffN(-1)^{s}(q^s +q^{-s}) \zeta_0\gc\notag\\
   \rintg(\pi_{p-1}\e_p) &=\coeffN 
   \zeta_p\gc
   \qquad   \rintg(\pi_{2p-1} \e_0) = \coeffN
   \zeta_0\gp\notag
  \end{align}
This gives the following  values for modified trace on our basis in $\HH$: 
$$
\begin{array}{|c||c|c|c|c|c|}
\hline
&[\id_{\PP^+_p}]&[\id_{\PP^-_p}]&[x_s^+]=[x_{p-s}^-]&[\id_{\PP^+_s}]&[\id_{\PP^-_{p-s}}]\\
\hline
\t&\coeffN\zeta_p& \coeffN\zeta_0 &
\coeffN\zeta_s
&\coeffN(-1)^{s} (q^s +q^{-s})\zeta_0&\coeffN(-1)^{s} (q^s +q^{-s})\zeta_0
\\ \hline
\t \; \text{for} \; \coeffN =\zeta_0^{-1}
 &(-1)^{p-1}& 1& (-1)^s[s]^2&
(-1)^s (q^s +q^{-s})&(-1)^s (q^s +q^{-s})
\\ \hline
\end{array}
$$
where the second row is normalisation free, while  the third row recovers the results of  \cite{BBG} with the normalisation  choice
 $\coeffN= \zeta_0^{-1} = (-1)^{p-1}2p ([p-1]!)^2$.

 \appendix
 \section{Proof of Proposition \ref{Meq}}\label{appA}
 From the definitions of $\HH(A)$ and $\HH(\Apmod)$
  the map $x\mapsto r_x$  induces 
  a linear map $\Phi\colon \HH(A)\rightarrow \HH(\Apmod)$ on the corresponding classes.
We need to construct its inverse.
By Lemma~\ref{lem:decomp-id}, for $P\in \Apmod$  we have a decomposition:
\be\label{eq:dec}
\id_P=\sum_{i=1}^k a_i\circ\id_A\circ b_i\ ,
\quad{\text{with}}\quad b_i\colon P\to A, \;a_i\colon A\to P\gp
\ee
Let us define a map $\psi_P\colon\End_A(P)\rightarrow \HH(A)$
by 
\be
\psi_P(f):= \sum_i \left[(b_i\circ f\circ a_i)(\one)\right]\gp
\ee 
We will check that the map
\begin{align}\label{bubu}
\Psi\colon\; \HH(\Apmod)&\xrightarrow{\sim}\HH (A)\\
[P,f]&\mapsto \psi_P(f) \nonumber
\end{align}
is well-defined, i.e.\ it does not depend on the choice of the decomposition
\eqref{eq:dec} and 
descends  on the class of  $f$ in $\HH(\Apmod)$.

Assume we have another  decomposition $\id_P=\sum_{i'}a'_{i'}\circ\id_A \circ b'_{i'}$, with the associated map 
$$
\psi'_P(f)=\sum_{i'}[(b'_{i'}\circ f\circ a'_{i'})(\one)]\gp
$$
 Inserting the identity~\eqref{eq:dec}, we have 
\begin{eqnarray}
\psi'_P(f)&=&\sum_{i,i'} \left[(b'_{i'}\circ f\circ a_i\circ b_i\circ a'_{i'})(\one)\right] \label{E:antihom1}\\
&=&\sum_{i,i'} \left[( b_i\circ a'_{i'})(\one)\,(b'_{i'}\circ f\circ a_i)(\one)\right]\notag
\end{eqnarray}
where we applied the algebra isomorphism from Lemma \ref{lem:Uop-End} to the composition
of  $A$-en\-do\-mor\-phisms $(b'_{i'}\circ f\circ a_i)$ and  $(b_i\circ a'_{i'})$.
Similarly,
\begin{eqnarray}
\psi_P(f)&=&\sum_{i,i'} \left[(b_{i}\circ  a'_{i'}\circ b'_{i'}\circ f\circ a_{i})(\one)\right]\label{E:antihom}\\ 
&=&\sum_{i,i'} \left[(b'_{i'}\circ f\circ a_i)(\one)\, ( b_i\circ a'_{i'})(\one)\right]\notag
\end{eqnarray}
which is equal to the second line in~\eqref{E:antihom1} because the summands are classes in $\HH(A)$. We thus
get the equality $\psi'_P(f)=\psi_P(f)\in \HH(A)$.

Let us now show that the family 
$$
\{\psi_P\colon \End_A(P)\rightarrow \HH(A)\, | \, P\in \Apmod\}
$$ 
has cyclicity property. 
 Let $f\colon P\rightarrow P'$ and $g\colon P'\rightarrow P$, and 
$\id_P$ as in~\eqref{eq:dec} and let $\id_{P'}=\sum_{i'}a'_{i'}\circ\id_A \circ b'_{i'}$.
 We then have
\begin{eqnarray}
\psi_{P'}(f\circ g)&=&\sum_{i,i'} \left[(b'_{i'}\circ f\circ a_i\circ \id_A\circ b_i\circ g\circ a'_{i'})(\one)\right]\\
&=&\sum_{i,i'} \left[( b_i\circ g\circ a'_{i'})(\one)\, (b'_{i'}\circ f\circ a_i)(\one)\right]\notag\\
&=&\sum_{i,i'} \left[(b'_{i'}\circ f\circ a_i)(\one)\,( b_i\circ g \circ a'_{i'})(\one)\right]\notag\\
&=&\sum_{i,i'} \left[( b_i\circ g\circ a'_{i'}\circ b'_{i'}\circ f\circ a_i)(\one)\right]\notag\\
&=&\psi_P(g\circ f)\notag
\end{eqnarray} 
where we again used the algebra isomorphism in Lemma \ref{lem:Uop-End}.
From this cyclicity property, we see that the map $\psi_P$ does not depend on representatives $f$ in the class $[f]\in\HH(\Apmod)$, for $f\in\End_A(P)$. Therefore, the map $\Psi$ in~\eqref{bubu} is well-defined.

To see that $\Psi \circ \Phi=\id_{\HH(A)}$ we have to check that the composition $[x]\mapsto [r_x]\mapsto \psi_A(r_x)$ is identity. Note that here we use only $P=A$ component in the quotient~\eqref{eq:HH0-C-def}. Using  the trivial decomposition of $\id_A$ from~\eqref{eq:dec}, we  indeed get the expected identity, and so $\Psi$ is a left inverse of $\Phi$. 

To show that $\Psi$ is also a right inverse of $\Phi$, assume $P\in \Apmod$ and  $f\in \End_A(P)$.
Then $\Psi$ maps $[P,f]$ to the class of 
$x=\sum_i (b_i\circ f\circ a_i)(\one)\in A$. We note that the corresponding endomorphism of $A$ by right multiplication with $x$ is
$r_x=\sum_i (b_i\circ f\circ a_i)$. And by cyclicity we have 
$[r_x]=[f]\in \HH(\Apmod)$. We thus get $\Phi \circ \Psi=\id_{\HH(\Apmod)}$, which completes the proof of the proposition.

\section{Proof of Lemma \ref{lem:A.2}}\label{appB}
The fact that $w(\alpha_i)$ is a positive root if  $l(ws_i)=l(w)+1$ follows from \cite[VI.1.6, Cor.\,2]{Bourbaki}.
 We will prove the formula for $\wt(T_w(E_i))$ by induction on the length $l(w)=\nu\geq 0$.
 A proof for  $\wt(T_w(F_i))$ works similarly.
For $\nu=0$, $w$ is the unit element and the statement holds by definition of the $L$-grading. 
 We suppose that the statement holds for $\nu\geq 0$, i.e.\ 
  that $T_w(E_i)$ has $L$-grading $\wt\bigl(T_w(E_i)\bigr)=w(\alpha_i)$ if $l(w)\leq \nu$ and $l(ws_i)=l(w)+1$.
 
Let $w\in \weyl$ be an element with length \mbox{$l(w)=\nu+1$} and $i$ be such that
 $l(ws_i)=\nu+2$.
Recall that $w(\alpha_i)
\in \Delta_+$.
 We claim that  there exists $j\neq i$ such that
$w(\alpha_j)$ is a negative root. This follows from~\cite[Sec.\,V.4.4, Thm\,1]{Bourbaki}, indeed if $w$ permutes the positive roots, then $w$
fixes the positive chamber $C=\{x\in L \;|\;(\alpha_i|x)>0, 1\leq i\leq n\}$ and hence is identity.
Let us choose such $j$. Recall that $l(ws_j)=l(w)+1$ would imply that $w(\alpha_j)$ is a positive root, hence we have that $l(ws_j)<\nu+2$. From the defining relations, multiplication with $s_j$ changes the length by $\pm1$, we then  clearly  have $l(ws_j)\ne l(w)$,  therefore $l(ws_j)=\nu$.
 Denote by $\langle s_i,s_j\rangle\subset \weyl$ the subgroup generated  by $s_i$ and $s_j$.
The idea is to use elements from the 
orbit $w\langle s_i,s_j\rangle$ 
to construct an appropriate pair $(w',k)$ to which the induction hypothesis applies.
For a given choice of $j$ above, we have 3 cases: $a_{ij}=0$ or if $a_{ij}=-1$ then  $w s_j s_i$ might have length $\nu\pm 1$. We analyse all of these cases:

\textbf{Case 1:} $a_{ij}=0$. We can choose $(w',k)=(w s_j,i)$. Indeed,   $l(w')=\nu$ and since $l(w s_i)=\nu+2$ then $w's_i=ws_is_j$ has length $\nu+1$, and so we can apply the induction hypothesis. 
We then get $T_w(E_i)=(T_{w'}\circ T_j)(E_i)=T_{w'}(E_i)$ because $T_j(E_i)=E_i$, see~\eqref{eq:T-act}. Using that $s_j(\alpha_i)=\alpha_i$ we get $\wt\bigl(T_w(E_i)\bigr)=w'(\alpha_i)=w(\alpha_i)$.

\textbf{Case 2a:} $a_{ij}=-1$ and  $l(w s_j s_i)=\nu+1$. We choose
$w'=ws_j$ and  to both $(w',i)$, $(w',j)$  the induction hypothesis applies.
We have  $T_j(E_i)=-E_iE_j+q^{-1}E_jE_i$, $s_j(\alpha_i)=\alpha_i+\alpha_j$, hence
 \begin{align}
 \wt(T_w(E_i))&=\wt(T_{w'}\circ T_j(E_i))= \wt(T_{w'}(E_i))+\wt(T_{w'}(E_j))\\
 &=w'(\alpha_i)+w'(\alpha_j)=(w'\circ s_j)(\alpha_i)= w(\alpha_i)\ ,\notag
 \end{align}
 where we used that $T_{w'}$ is an automorphism of the algebra and that $\wt$ makes the algebra graded.
 
\textbf{Case 2b:} $a_{ij}=-1$ and $l(w s_j s_i)=\nu-1$. We choose $w'=w s_j s_i$ and check that $l(w's_j)= l(w s_i s_j s_i) =\nu$ because on one side it is at most $\nu$ and on the other side it is at least $l(w s_i)-l(s_j s_i)=\nu$. Therefore, we can apply the induction hypothesis to $(w',j)$. We have
$(T_i\circ T_j)(E_i)=E_j$ and $(s_js_i)(\alpha_j)=\alpha_i$, hence
$$\wt(T_w(E_i))=\wt(T_{w'}(E_j))=w'(\alpha_j)=w(\alpha_i)\ .$$
This finishes the proof.

\section{Proof of Lemma \ref{lemma:commut}}\label{appC}
We will use the following result \cite[Thm.\,2.3]{Xi}\footnote{We note that  in \cite[Thm.\,2.3]{Xi} a commutation formula is given for divided powers, and we just rewrite it for our choice of powers of $E_\beta$.}
stated for  $1\leq j\leq k$, and $1\leq a,b\leq p-1$:
\be \label{thm:xi}
E_{\beta_k}^aE_{\beta_j}^b=q^{ab(\beta_j|\beta_k)} E_{\beta_j}^bE_{\beta_k}^a +
\sum_{0\leq a_j,a_{j+1},\dots,a_k\leq p-1\atop
a_j<b\ ,\ a_k<a}
\rho(a_j,\dots,a_k)
 E_{\beta_j}^{a_j}E_{\beta_{j+1}}^{a_{j+1}}\dots E_{\beta_k}^{a_k}
\ee
where the coefficients $\rho(a_j,\dots,a_k)\in\kk$
vanish if  the corresponding monomials do not have the expected $L$-grading:
\be \label{coeff_vanishing}
\rho(a_j,\dots,a_k)=0\quad \text{ if } \quad a_j\beta_j+a_{j+1}\beta_{j+1}+\dots+a_k\beta_k\neq b\beta_j+a\beta_k \gp
\ee
We prove the lemma by induction on $\nu=k-j$.

Let us consider the case $\nu=1$.
 The formula~\eqref{thm:xi} gives
\be\label{eq:EE-com}
 E_{\beta_{j+1}}^{p-1}E_{\beta_j}=q^{(p-1)(\beta_j|\beta_{j+1})} E_{\beta_j}E_{\beta_{j+1}}^{p-1} \gc
\ee
 where we used  that the second term in~\eqref{thm:xi} vanishes because of the condition~\eqref{coeff_vanishing}, which is in our case
\be\label{eq:ineq-ab}
a_{j+1}\beta_{j+1}\ne \beta_j+(p-1)\beta_{j+1}\gc
\ee
holds for all $a_{j+1}< p-1$.
Equality~\eqref{eq:EE-com} shows that~\eqref{lem:commut} is true for $k-j=1$.

Assume the induction hypothesis that for $1\leq \nu<N$ the formula \eqref{lem:commut} is true if $k-j\leq\nu$.
We consider the case where $k-j=\nu+1$.
From \eqref{thm:xi}, we get
\be\label{proof_commut}
E_{\beta_k}^{p-1}E_{\beta_j}=q^{(p-1)(\beta_j|\beta_k)} E_{\beta_j}E_{\beta_{k}}^{p-1} +
\sum_{0\leq a_{j+1},\dots,a_k\leq p-1\atop
a_k<p-1}
\rho(0,a_{j+1},\dots,a_k)
 E_{\beta_{j+1}}^{a_{j+1}}\dots E_{\beta_k}^{a_k}\gp
\ee
We then use the condition~\eqref{coeff_vanishing} on vanishing coefficients $\rho(0,a_{j+1},\dots,a_k)$, which is in our case
$$
a_{j+1}\beta_{j+1}+\dots+a_k\beta_k\ne \beta_j+(p-1)\beta_k\gp
$$
 We see that it certainly  holds if all the integers $a_{j+1}, \dots, a_{k-1}$ are  zero -- in this case we get the inequality $a_k\beta_k\ne \beta_j+(p-1)\beta_k$, similar to~\eqref{eq:ineq-ab}. Therefore, for  {\sl non}-vanishing coefficients $\rho$ in the sum~\eqref{proof_commut} we have to necessarily assume that  at least one of the integers $a_{j+1}, \dots, a_{k-1}$ is non zero.
 Let $l$ be the smallest  index for which $a_l$ is non zero.
 We have $j+1\leq l<k$ hence $|k-l|<\nu$. The induction hypothesis 
  gives us commutation relation for the root vector $E_{\beta_l}$, and we get
  \be E_{\beta_{l}}^{p-1}E_{\beta_{l+1}}^{p-1}\dots E_{\beta_{k-1}}^{p-1}E_{\beta_l}= q^{(p-1)(\beta_l|\beta_{l+1}+\dots+\beta_{k-1})} E_{\beta_l}^{p-1} E_{\beta_l}E_{\beta_{l+1}}^{p-1}\dots E_{\beta_{k-1}}^{p-1}=0\ . \ee
 This gives the following vanishing result for terms in the sum~\eqref{proof_commut} corresponding to non-zero coefficients $\rho(0,a_{j+1},\dots,a_k)$:
\be
E_{\beta_{j+1}}^{p-1}E_{\beta_{j+2}}^{p-1}\dots E_{\beta_{k-1}}^{p-1}E_{\beta_l}=0\ ,
\notag\ee
and therefore these terms do not contribute while moving $E_{\beta_j}$ to the left in LHS of~\eqref{lem:commut}.
We have thus obtained
\be 
E_{\beta_{j+1}}^{p-1}E_{\beta_{j+2}}^{p-1}\dots E_{\beta_{k}}^{p-1}E_{\beta_j}=
q^{(p-1)(\beta_j|\beta_k)} E_{\beta_{j+1}}^{p-1}E_{\beta_{j+2}}^{p-1}\dots E_{\beta_{k-1}}^{p-1} E_{\beta_j}E_{\beta_{k}}^{p-1}\ .
\notag \ee
Using again the induction hypothesis, we move $E_{\beta_j}$ to the left using~\eqref{thm:xi} and get the expected formula \eqref{lem:commut}, which completes the proof.

\newcommand\arxiv[2]      {\href{http://arXiv.org/abs/#1}{#2}}
\newcommand\doi[2]        {\href{http://dx.doi.org/#1}{#2}}
\newcommand\httpurl[2]    {\href{http://#1}{#2}}

\end{document}